\documentclass[12pt]{amsart}

\usepackage{amsmath, amssymb}
\usepackage{graphicx}
\usepackage{color}
\usepackage[utf8]{inputenc}
\usepackage{float}
\usepackage{subfigure}

\hoffset=-0.5in
\textwidth=6in

\newtheorem{theorem}{Theorem}[section]
\newtheorem{lemma}[theorem]{Lemma}

\theoremstyle{definition}
\newtheorem{defi}[theorem]{Definition}
\newtheorem{bsp}[theorem]{Example}

\theoremstyle{remark}
\newtheorem{remark}[theorem]{Remark}

\numberwithin{equation}{section}

\newcommand{\rr}{{\mathbb R}}

\newcommand{\nat}{{\mathbb N}}
\newcommand{\ganz}{{\mathbb Z}}
\newcommand{\complex}{{\mathbb C}}
\newcommand{\Exp}{{\mathbb E}}

\newcommand{\sgn}{\operatorname{sgn}}

\allowdisplaybreaks

\begin{document}

\sloppy
\title[Semi-fractional diffusion equations]{Semi-fractional diffusion equations} 
\author{Peter Kern}
\address{Peter Kern, Mathematical Institute, Heinrich-Heine-University D\"usseldorf, Universit\"atsstr. 1, D-40225 D\"usseldorf, Germany}
\email{kern\@@{}hhu.de}

\author{Svenja Lage}
\address{Svenja Lage, Mathematical Institute, Heinrich-Heine-University D\"usseldorf, Universit\"atsstr. 1, D-40225 D\"usseldorf, Germany}
\email{Svenja.Lage@uni-duesseldorf.de}

\author{Mark M. Meerschaert}
\address{Mark M. Meerschaert, Department of Statistics and Probability, Michigan State University,  East Lansing, MI 48824, USA}
\email{mcubed@stt.msu.edu}

\date{\today}

\begin{abstract}
It is well known that certain fractional diffusion equations can be solved by the densities of stable L\'evy motions. In this paper we use the classical semigroup approach for L\'evy processes to define semi-fractional derivatives, which allows us to generalize this statement to semistable L\'evy processes. A Fourier series approach for the periodic part of the corresponding L\'evy exponents enables us to represent semi-fractional derivatives by a Gr\"unwald-Letnikov type formula. We use this formula to calculate semi-fractional derivatives and solutions to semi-fractional diffusion equations numerically. In particular, by means of the Gr\"unwald-Letnikov type formula we provide a numerical algorithm to compute semistable densities.
\end{abstract}

\keywords{semistable L\'evy process, log-characteristic function, log-periodic perturbation, semi-fractional derivative, Zolotarev fractional derivative, Gr\"unwald-Letnikov type formula}
\subjclass[2010]{Primary 35R11, 60E10; Secondary 26A33, 60E07, 60G22, 60G51, 60H30, 82C31.}

\maketitle

\baselineskip=18pt

\section{Introduction}

Space-fractional diffusion equations are useful to model anomalous diffusions with a faster spreading rate than the classical Brownian motion model predicts \cite{MK}. In this case the behavior is super-diffusive and a spreading rate $t^{1/\alpha}$ with time $t$ for some $\alpha\in(0,2)$ may be modeled by a space-fractional partial differential equation
\begin{equation*}
\frac{\partial}{\partial t}\,p(x,t)=D_1\,\frac{\partial^{\alpha}}{\partial x^{\alpha}}\,p(x,t)+D_2\,\frac{\partial^{\alpha}}{\partial (-x)^{\alpha}}\,p(x,t)
\end{equation*}
for constants $D_1,D_2\in\rr$ with $D_1,D_2\leq0$ if $\alpha\in(0,1)$ or $D_1,D_2\geq0$ if $\alpha\in(1,2)$ and $D_1+D_2\not=0$ in both cases. Together with the initial condition $p(x,0)=\delta_x$ (the Dirac delta distribution) the solution is well-known to consist of densities $x\mapsto p(x,t)$ of a stable L\'evy process $(X_t)_{t\geq0}$; see \cite{Chavez, MMMSik}. Here, $\frac{\partial^{\alpha}}{\partial x^{\alpha}}$ and $\frac{\partial^{\alpha}}{\partial (-x)^{\alpha}}$ are the positive and negative fractional derivatives of order $\alpha\in(0,2)\setminus\{1\}$. We refer to \cite{KST, MMMSik, SKM} for details on fractional derivatives, corresponding fractional partial differential equations and their connection to stochastic processes with heavy tails. 
Numerous applications of fractional diffusions are known from physics, biology, hydrology and other sciences; e.g., see \cite{KST, MMMSik, SKM} and the references cited therein. 

Our goal is to generalize these fractional diffusion models so that the more general class of semistable L\'evy process densities also appears as solutions of certain nonlocal diffusion equations which we will call semi-fractional. The difference between the stable  and the semistable L\'evy processes is that the power law behavior of the tails of the L\'evy measure can additionally be disturbed by log-periodic functions as described below. Such log-periodic perturbations naturally appear in many physical applications \cite{Sor} and also in finance \cite{Sor2}. We further aim to give numerical solutions to semi-fractional diffusion equations by means of a Gr\"unwald-Letnikov type formula similar to \cite{MMMTad} which also provides a method to calculate any semistable probability density numerically. To the best of our knowledge, the only available numerical method to calculate semistable probability densities is given in \cite{Cha} by Laplace inversion techniques for the special case of one-sided semistable distributions.

For some fixed $c>1$ and $\alpha\in(0,2]$ a non-degenerate probability measure $\nu$ is called $(c^{1/\alpha},c)$-semistable if $\nu$ is infinitely divisible and there exists $d\in\mathbb{R}$ such that
\begin{equation}\label{semistable}
 \nu^{\ast c}=(c^{1/\alpha}\nu)\ast\varepsilon_d, 
\end{equation}
where $\nu^{\ast c}$ is the $c$-fold convolution power of $\nu$, well-defined by the L\'evy-Khintchine representation \cite[Theorem 3.1.11]{MMMHPS}, $(c^{1/\alpha}\nu)$ is the image measure of $\nu$ under the dilation $x\mapsto c^{1/\alpha}x$ and $\varepsilon_d$ denotes the Dirac measure concentrated at $d\in\mathbb R$. If \eqref{semistable} holds for every $c>1$ and some $d=d(c)\in\mathbb R$ then $\nu$ is $\alpha$-stable and thus semistability generalizes stability with a discrete scaling property. If $\alpha=2$, then $\nu$ is a normal law and thus stable. We will exclude this case in our considerations and thus for $\alpha\in(0,2)$ the Fourier transform $\widehat\nu(x)=\int_{\rr}e^{ixy}\,d\nu(y)=\exp(\psi(x))$ is given by the log-characteristic function
\begin{equation}\label{logchar}
\psi(x)=iax+\int\limits_{\mathbb{R}\setminus\{0\}}\left(e^{ixy}-1-\frac{ixy}{1+y^2}\right)d\phi(y)
\end{equation}
for some unique $a\in\rr$ and a L\'evy measure $\phi$ determined by
$$
 \phi(r,\infty)=r^{-\alpha}\theta_1(\log r)\quad\text{ and }\quad
 \phi(-\infty,-r)=r^{-\alpha}\theta_2(\log r)
$$
for every $r>0$, where $\theta_1,\theta_2:\mathbb{R}\to[0,\infty)$ are $\log(c^{1/\alpha})$-periodic functions, such that $\theta:=\theta_1+\theta_2$ is strictly positive. 
Due to the fact that $\phi$ is a measure, we know that $r\mapsto r^{-\alpha}\theta(\log r)$ is non-increasing, equivalently $\theta$ fulfills the growth restriction
\begin{equation}\label{growing}
\theta(y+\delta)\leq e^{\alpha\delta}\theta(y)\quad \text{ for every }y\in\rr\text{ and }\delta\geq0
\end{equation}
and \eqref{growing} carries over to $\theta_1$ and $\theta_2$ instead of $\theta$. For more details on semistable distributions we refer the reader to the monographs \cite{MMMHPS} and \cite{Sato}.

The following result is from Theorem 3.17 of \cite{MMMSik}. Let $(X_t)_{t\geq 0}$ be a L\'evy process with Fourier transform $\Exp[\exp(ixX_t)]=\exp(t\,\psi(x))$,
then the family of linear operators $(T_t)_{t\geq0}$ given by
\begin{equation*}
 T_tf(x):=\Exp[f(x-X_t)]\quad\text{ for all }t\geq0\text{ and }x\in\rr
\end{equation*}
determines a $C_0$-semigroup for functions $f$ in $C_0(\mathbb{R})$ with generator
\begin{equation}\label{generator}
Lf(x)=-af'(x)+\int\limits_{\mathbb{R}\setminus\{0\}}\left(f(x-y)-f(x)+\frac{yf'(x)}{1+y^2}\right)d\phi(y),
\end{equation}
where at least functions $f$ with $f,f',f''\in C_0(\mathbb{R})\cap L^1(\mathbb{R})$ belong to the domain of the generator, and we have
\begin{equation}\label{gen2}
\widehat{Lf}(x)=\psi(x)\widehat{f}(x)\quad\text{ for all }x\in\mathbb{R},
\end{equation}
where $\psi$ is the log-characteristic function in \eqref{logchar} and the Fourier transform of a function $f\in L^1(\rr)$ is given by $\widehat f(x)=\int_{\rr}e^{ixy}f(y)\,dy$. 

This is our starting point to introduce semi-fractional derivatives in Section 2 in terms of the generators of semistable semigroups. Using a Fourier series approach for the periodic functions $\theta_1,\theta_2$ we will establish a series representation of the log-characteristic function in Section 3 which sheds some light on semi-fractional derivatives. The Fourier series approach also enables us to give a useful numerical approximation formula to semi-fractional derivatives by a Gr\"unwald-Letnikov type formula in Section 4. Finally, in Section 5 we consider semi-fractional diffusion equations for which the densities of semi-stable L\'evy processes are a valid solution. These densities are then approximated numerically by means of our Gr\"unwald-Letnikov type formula.

\section{Semi-fractional derivatives and diffusion equations}

We introduce the following notation for periodic functions suitably appearing in the log-characteristic function of a semistable law.
\begin{defi}\label{admiss}
Given $c>1$ and $\alpha\in(0,2)$ we call a function $\theta:\rr\to(0,\infty)$ {\it admissable} if $\theta$ is $\log(c^{1/\alpha})$-periodic and fulfills the growth restriction \eqref{growing}. An admissable function $\theta$ as above is called {\it smooth} if $\theta$ is continuous on $\rr$ and piecewise continuously differentiable.
\end{defi}
We will now define semi-fractional derivatives as a direct generalization of classical fractional derivatives by their generator form. For details on the connection of ordinary fractional derivatives and stable distributions we refer to \cite{MMMSik}. Note that the generators of semistable L\'evy processes have been used in \cite{neben} to approximate fractional derivatives as $c\downarrow1$. As in the special case of classical fractional derivatives, we will distinguish between the cases $\alpha\in(0,1)$ and $\alpha\in(1,2)$ and between L\'evy measures concentrated on the positive or negative half-line. We will need the following auxiliary result for $\alpha\in(1,2)$.
\begin{lemma}\label{gamma1}
Given $c>1$, $\alpha\in(1,2)$ and a corresponding admissable function $\theta$ we have
\begin{equation}\label{gammas1}
\int_y^{\infty} x^{-\alpha}\theta(\log x)\,dx=y^{1-\alpha}\gamma(\log y)\quad\text{ for all }y>0,
\end{equation}
where $\gamma$ is the continuous admissable function given by
\begin{equation}\label{gammadef1}
\gamma(x)=e^{(\alpha-1)x}\int_{e^x}^{\infty} y^{-\alpha}\theta(\log y)\,dy\quad\text{ for all }x\in\rr.
\end{equation}
\end{lemma}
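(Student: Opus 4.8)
The plan is to observe first that the stated identity \eqref{gammas1} is essentially a rewriting of the definition \eqref{gammadef1}: substituting $x=\log y$ into \eqref{gammadef1} gives $\gamma(\log y)=y^{\alpha-1}\int_y^\infty z^{-\alpha}\theta(\log z)\,dz$, and multiplying by $y^{1-\alpha}$ yields \eqref{gammas1} at once. The substance of the lemma therefore lies in (i) checking that the improper integral converges, so that $\gamma$ is well defined and finite, and (ii) verifying that the function $\gamma$ so defined is continuous, strictly positive, $\log(c^{1/\alpha})$-periodic, and satisfies the growth restriction \eqref{growing} --- that is, that $\gamma$ is a continuous admissable function.

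For convergence I would first record that any admissable $\theta$ is bounded: combining periodicity with \eqref{growing} gives, for $x$ in a fundamental period $[0,p]$ with $p:=\log(c^{1/\alpha})$, the bound $\theta(x)\le e^{\alpha p}\theta(0)=c\,\theta(0)$, and periodicity extends this to all of $\rr$. With $\theta\le M:=c\,\theta(0)$ and $\alpha>1$, the tail $\int_{e^x}^\infty y^{-\alpha}\theta(\log y)\,dy\le M\int_{e^x}^\infty y^{-\alpha}\,dy=\frac{M}{\alpha-1}e^{(1-\alpha)x}$ is finite, so $\gamma$ is well defined; strict positivity of $\gamma$ is immediate since $\theta>0$ makes the integrand strictly positive. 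Continuity of $\gamma$ then follows because $x\mapsto e^{(\alpha-1)x}$ is continuous and $x\mapsto\int_{e^x}^\infty y^{-\alpha}\theta(\log y)\,dy$ depends continuously on its lower limit, the integrand being locally bounded.

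For periodicity I would substitute $y=e^p u$ in the integral defining $\gamma(x+p)$ and use $\theta(p+\log u)=\theta(\log u)$; the factor $e^{p(1-\alpha)}$ produced by the substitution cancels exactly against the extra $e^{(\alpha-1)p}$ coming from the prefactor, leaving $\gamma(x+p)=\gamma(x)$. The growth restriction is the part requiring the most care, but it dissolves once one recalls (from the discussion following \eqref{growing}) that the growth restriction for $\gamma$ is equivalent to $x\mapsto e^{-\alpha x}\gamma(x)$ being non-increasing. Here the computation simplifies pleasantly:
\[
e^{-\alpha x}\gamma(x)=e^{-x}\int_{e^x}^\infty y^{-\alpha}\theta(\log y)\,dy,
\]
which is a product of two non-negative and non-increasing functions of $x$ (the exponential, and the tail integral whose lower limit increases with $x$ over a non-negative integrand), hence itself non-increasing. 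This establishes the growth restriction for $\gamma$ and completes the verification that $\gamma$ is a continuous admissable function, after which \eqref{gammas1} follows from the opening remark. The only genuine subtlety is the boundedness/convergence step, which is precisely where the hypothesis $\alpha>1$ enters --- for $\alpha\le1$ the tail integral would diverge, explaining why the lemma is stated only for $\alpha\in(1,2)$.
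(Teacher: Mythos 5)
Your proof is correct and takes essentially the same route as the paper: the identity \eqref{gammas1} is read off from the definition \eqref{gammadef1}, periodicity comes from the same change of variables $y=c^{1/\alpha}z$, and the growth restriction \eqref{growing} from monotonicity of the tail integral (the paper notes that the left-hand side of \eqref{gammas1}, i.e.\ $e^{(1-\alpha)x}\gamma(x)$, is decreasing, while you equivalently observe that $e^{-\alpha x}\gamma(x)$ is a product of non-increasing non-negative functions). Your explicit verification that $\theta$ is bounded and that the integral converges for $\alpha>1$ merely fills in a step the paper dismisses with ``clearly.''
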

\begin{proof}
 Clearly, from \eqref{gammadef1} it follows that $\gamma$ is continuous and $\gamma(x)\in(0,\infty)$ for all $x\in\rr$. Further, $\gamma$ fulfills the growth restriction \eqref{growing}, since the left-hand side in \eqref{gammas1} is strictly decreasing, and by a change of variables $y=z\,c^{1/\alpha}$ we get
 \begin{align*}
\gamma(x+\log c^{1/\alpha}) & =e^{(\alpha-1)x}c^{(\alpha-1)/\alpha}\int_{e^xc^{1/\alpha}}^{\infty} y^{-\alpha}\theta(\log y)\,dy\\
&=e^{(\alpha-1)x}c\int_{e^x}^{\infty} (z\,c^{1/\alpha})^{-\alpha}\theta(\log z+\log c^{1/\alpha})\,dz=\gamma(x)
\end{align*}
for all $x\in\rr$, showing that $\gamma$ is an admissable function. 
\end{proof}

\subsection{Semi-fractional derivatives}

We will first consider $\alpha\in(0,1)$.  Given $c>1$ and $\alpha\in(0,1)$ let $\phi_1$ be the L\'evy measure given by 
\begin{equation}\label{PSI1}
 \phi_1(r,\infty)=r^{-\alpha}\theta(\log r)\quad\text{ and }\quad
 \phi_1(-\infty,-r)=0
\end{equation}
for all $r>0$ and some fixed admissable function $\theta$. Let
$$a_1:=\int\limits_{0+}^\infty\frac{y}{1+y^2}\,d\phi_1(y),$$
which is a positive and finite constant, and write $L_1$ for the corresponding generator in \eqref{generator} with $a=a_1$ and $\phi=\phi_1$. Then the generator takes a simpler form:
\begin{defi} \label{generator1}
With the above assumptions, for a function $f$ belonging to the domain of $L_1$ we define the {\it positive (or left-sided) semi-fractional derivative} of order $\alpha\in(0,1)$ with respect to $c$,  $\theta$ by its generator form
 	\begin{align*}
	\frac{\partial^{\alpha}}{\partial_{c,\theta}x^{\alpha}}\,f(x):=-L_1f(x)=\int_{0+}^{\infty}(f(x)-f(x-y))\,d\phi_1(y).
	\end{align*}
\end{defi}

By reflection of the L\'evy measure $\phi_1$ we get a new L\'evy measure $\phi_2$ on the negative half-line with
$$
 \phi_2(r,\infty)=0\quad\text{ and }\quad
 \phi_2(-\infty,-r)=r^{-\alpha}\theta(\log r)
$$
for all $r>0$. Write $L_2$ for the corresponding generator in \eqref{generator} with $\phi=\phi_2$ and
$$a=a_2:=\int\limits_{-\infty}^{0-}\frac{y}{1+y^2}\,d\phi_2(y)=-\int_{0+}^\infty\frac{y}{1+y^2}\,d\phi_1(y)=-a_1.$$
By a change of variables $y\mapsto -y$ in \eqref{generator}, we may express $L_2$ in terms of the L\'evy measure $\phi_1$ from \eqref{PSI1}.
Then the generator takes a simpler form, similar to the nonlocal operator in Definition \ref{generator1}, but involving the values of $f(y)$ for $y>x$:
\begin{defi} \label{generator2}
With the above assumptions, for a function $f$ belonging to the domain of $L_2$ we define the {\it negative (or right-sided) semi-fractional derivative} of order $\alpha\in(0,1)$ with respect to $c$,  $\theta$ by 
 	\begin{align*}
	\frac{\partial^{\alpha}}{\partial_{c,\theta}(-x)^{\alpha}}\,f(x):=-L_2f(x)=\int_{0+}^{\infty}(f(x)-f(x+y))\,d\phi_1(y).
	\end{align*}
\end{defi}
For functions $f$ with $f,f',f''\in C_0(\mathbb{R})\cap L^1(\mathbb{R})$ we know that the fractional derivative of order $\alpha\in(0,1)$ exists and integration by parts in combination with (\ref{PSI1}) yields the equivalent {\it Caputo form}
\begin{align*}
\frac{\partial^{\alpha}}{\partial_{c,\theta}x^{\alpha}}\,f(x)&=\left[(f(x-y)-f(x))y^{-\alpha}\theta(\log y)\right]_{y=0+}^{\infty}+\int_{0+}^{\infty}f'(x-y)y^{-\alpha}\theta(\log y)\,dy\\
&=\int_{0+}^{\infty}f'(x-y)y^{-\alpha}\theta(\log y)\,dy,
\end{align*}
where the first term vanishes since $\theta$ is bounded, $f, f'\in C_0(\mathbb{R})$ and thus for some constant $C>0$ we have
\begin{align*}
\lim\limits_{y\downarrow 0 }|(f(x-y)-f(x)){y^{-\alpha}}\theta(\log y )|&\leq C\lim\limits_{y\downarrow 0 } |f(x-y)-f(x)|y^{-\alpha}\\
& =C|f'(x)|\lim\limits_{y\downarrow 0 } y^{1-\alpha}=0.
\end{align*}

Analogously, the Caputo form of the negative semi-fractional derivative of order $\alpha\in(0,1)$ is given by
\begin{align*}
\frac{\partial^{\alpha}}{\partial_{c,\theta}(-x)^{\alpha}}\,f(x)&=-\int_{0+}^{\infty}f'(x+y)y^{-\alpha}\theta(\log y)\,dy.
\end{align*}
Note that for constant $\theta\equiv1/\Gamma(1-\alpha)>0$ we get back the usual (positive and negative) Caputo fractional derivatives of order $\alpha\in(0,1)$.

Now we will consider $\alpha\in(1,2)$.  Given $c>1$ and $\alpha\in(1,2)$ let $\phi_1$ be as above and define
$$a_3:=\int\limits_{0+}^\infty\left(\frac{y}{1+y^2}-y\right)\,d\phi_1(y),$$
which is a real constant due to our assumptions. Write $L_3$ for the corresponding generator in \eqref{generator} with $a=a_3$ and $\phi=\phi_1$. Then the generator equation \eqref{generator} takes a simpler form:
\begin{defi} \label{generator3}
With the above assumptions, for a function $f$ belonging to the domain of $L_3$ we define the {\it semi-fractional derivative} of order $\alpha\in(1,2)$ with respect to $c$,  $\theta$ by its generator form
 	\begin{align*}
	\frac{\partial^{\alpha}}{\partial_{c,\theta}x^{\alpha}}\,f(x):=L_3f(x)=\int_{0+}^{\infty}(f(x-y)-f(x)+y\, f'(x))\,d\phi_1(y).
	\end{align*}
\end{defi}

Again, by reflection of the L\'evy measure $\phi_1$ we get the L\'evy measure $\phi_2$ on the negative half-line and we write $L_4$ for the corresponding generator in \eqref{generator} with 
$$a=a_4:=\int\limits_{-\infty}^{0-}\left(\frac{y}{1+y^2}-y\right)\,d\phi_2(y)=-\int\limits_{0+}^{\infty}\left(\frac{y}{1+y^2}-y\right)\,d\phi_1(y)=-a_3.$$
Again, by a change of variables $y\mapsto -y$ in \eqref{generator}, we may express $L_4$ in terms of the L\'evy measure $\phi_1$ from \eqref{PSI1}. With this L\'evy measure and the above shift, we obtain the following generator:
\begin{defi} \label{generator4}
With the above assumptions, for a function $f$ belonging to the domain of $L_4$ we define the {\it negative semi-fractional derivative} of order $\alpha\in(1,2)$ with respect to $c$,  $\theta$ by 
 	\begin{align*}
	\frac{\partial^{\alpha}}{\partial_{c,\theta}(-x)^{\alpha}}\,f(x):=L_4f(x)=\int_{0+}^{\infty}(f(x+y)-f(x)-y\,f'(x))\,d\phi_1(y).
	\end{align*}
\end{defi}
As before, for functions $f$ with $f,f',f''\in C_0(\mathbb{R})\cap L^1(\mathbb{R})$ we know that the (negative) fractional derivative of order $\alpha\in(1,2)$ exists. Integrate by parts twice to obtain the equivalent Caputo forms
\begin{equation}\label{Capsfd}\begin{split}
\frac{\partial^{\alpha}}{\partial_{c,\theta}x^{\alpha}}\,f(x)&=\int_{0+}^{\infty}\left(f'(x)-f'(x-y)\right)y^{-\alpha}\theta(\log y)\,dy\\
& =\int_{0+}^{\infty}f''(x-y)\, y^{1-\alpha}\gamma(\log y)\,dy,
\end{split}\end{equation}
where $\gamma$ is the function from Lemma \ref{gamma1}, using that $f',f''\in C_0(\mathbb{R})$ and the fact that $\gamma$ is bounded, we obtain for some constant $C>0$
\begin{align*}
\lim\limits_{y\downarrow 0} \left|(f(x-y)-f(x))y^{1-\alpha}\gamma(\log y)\right|&\leq C \lim\limits_{y\downarrow 0} \left|(f(x-y)-f(x))\right|y^{1-\alpha}\\
&\leq C|f''(x)|\lim_{y\downarrow0}y^{2-\alpha}=0.
\end{align*}
Analogously we get
\begin{align*}
\frac{\partial^{\alpha}}{\partial_{c,\theta}(-x)^{\alpha}}\,f(x)&=\int_{0+}^{\infty}\left(f'(x+y)-f'(x)\right)y^{-\alpha}\theta(\log y)\,dy\\
& =\int_{0+}^{\infty}f''(x+y)\, y^{1-\alpha}\gamma(\log y)\,dy.
\end{align*}
Note that for constant $\theta\equiv (\alpha-1)/\Gamma(2-\alpha)=-1/\Gamma(1-\alpha)>0$, i.e.\ $\gamma\equiv1/\Gamma(2-\alpha)$, we get back the usual positive and negative) Caputo fractional derivatives of order $\alpha\in(1,2)$.

\subsection{Zolotarev derivative and ballistic scaling}

In our previous considerations we have excluded the case $\alpha=1$ and we now briefly illustrate by an example the technical difficulties that may arise for $\alpha=1$. A prominent example of a semistable distribution is the limit distribution of cumulated gains in a sequence of St.\ Petersburg games, where $c=2$ and $\alpha=1$. The L\'evy measure of the semistable limit distribution was first established in \cite{ML} and is the discrete measure $\phi_1$ given by
$$\phi_1(2^k)=2^{-k}\quad\text{ for all }k\in\ganz.$$
One can easily show that for $r>0$ we have
$$
 \phi_1(r,\infty)=r^{-\alpha}\theta(\log r)\quad\text{ and }\quad
 \phi_1(-\infty,-r)=0,
$$
where $\theta$ is the admissable function
$$\theta(x)=\exp\left(x-\left\lfloor\frac{x}{\log2}\right\rfloor\log 2\right)\quad\text{ for all }x\in\rr.$$
We further obtain
\begin{align*}
a_1 &=\int\limits_{0+}^\infty\frac{y}{1+y^2}\,d\phi_1(y)=\sum_{k=-\infty}^\infty\frac1{1+4^k}=\infty\\
\intertext{and}
a_3 &=\int\limits_{0+}^\infty\left(\frac{y}{1+y^2}-y\right)\,d\phi_1(y)=-\int\limits_{0+}^\infty\frac{y^3}{1+y^2}\,d\phi_1(y)=-\sum_{k=-\infty}^\infty\frac1{1+4^{-k}}=-\infty.
\end{align*}
Hence, both Definitions \ref{generator1} and \ref{generator3} fail to give a suitable semi-fractional derivative in this special case with $\alpha=1$. To overcome this problem, recently in \cite{KLM} the Zolotarev fractional derivative was introduced. In the ballistic case $\alpha=1$ a direct generalization of the Zolotarev fractional derivative in \cite {KLM} to our semistable situation is the following. As in Section 3.2.3 of \cite{HPBA} we define 
$$a=a_5:=\int_{0+}^\infty\left(\frac{y}{1+y^2}-\sin y\right)\,d\phi_1(y)$$
in the L\'evy-Khintchine representation \eqref{logchar} of the log-characteristic function $\psi$ with L\'evy measure $\phi_1$ as above and write $L_{\mathcal Z}^+$ for the corresponding generator in \eqref{generator}:
\begin{defi}\label{generator5}
With the above assumptions, for a function $f$ belonging to the domain of $L_{\mathcal Z}^+$ we define the {\it positive Zolotarev semi-fractional derivative} of order $\alpha=1$ with respect to $c$,  $\theta$ by its generator form
\begin{align*}
\frac{\partial_{\mathcal Z}}{\partial_{c,\theta}x}\,f(x):=L_{\mathcal Z}^+ f(x)=\int_{0+}^{\infty}(f(x-y)-f(x)+f'(x)\sin y)\,d\phi_1(y).
\end{align*}
\end{defi}
Note that for functions $f$ with $f,f',f''\in C_0(\mathbb{R})\cap L^1(\mathbb{R})$ by the mean value theorem we observe for some $\xi\in[x-y,x]$ and $\eta\in[\xi,x]$
\begin{align*}
f(x-y)-f(x)+f'(x)\sin y & =-y\,f'(\xi)+f'(x)\sin y\\
& =f'(x)(\sin y -y)+y\,(f'(x)-f'(\xi))\\
& =f'(x)(\sin y -y)+y\,(x-\xi)f''(\eta)
\end{align*}
showing that as $y\downarrow0$
$$\left| f(x-y)-f(x)+f'(x)\sin y \right|\leq \|f'\|_\infty |\sin y-y|+y^2\|f''\|_\infty=O(y^2).$$
Hence the Zolotarev semi-fractional derivative of order $\alpha=1$ exists for such functions and integration by parts yields the {\it Caputo form}
$$\frac{\partial_{\mathcal Z}}{\partial_{c,\theta}x}\,f(x)=\int_{0+}^{\infty}\left(f'(x)\cos y-f'(x-y)\right)y^{-1}\theta(\log y)\,dy.$$
Note that for constant $\theta\equiv 2/\pi$ we get back the Zolotarev fractional derivative of order $\alpha=1$ in \cite{KLM}.

Again, by reflection of the L\'evy measure $\phi_1$ we get the L\'evy measure $\phi_2$ on the negative half-line and we write $L_{\mathcal Z}^-$ for the corresponding generator in \eqref{generator} with 
$$a:=\int_{-\infty}^{0-}\left(\frac{y}{1+y^2}-\sin y\right)\,d\phi_2(y)=-\int_{0+}^{\infty}\left(\frac{y}{1+y^2}-\sin y\right)\,d\phi_1(y)=-a_5,$$
then, by a change of variables $y\mapsto -y$ in \eqref{generator}, this generator takes the simpler form:

\begin{defi}\label{generator6}
With the above assumptions, for a function $f$ belonging to the domain of $L_{\mathcal Z}^-$ we define the {\it negative Zolotarev semi-fractional derivative} of order $\alpha=1$ with respect to $c$,  $\theta$ by its generator form
\begin{align*}
\frac{\partial_{\mathcal Z}}{\partial_{c,\theta}(-x)}\,f(x):=L_{\mathcal Z}^- f(x)=\int_{0+}^{\infty}(f(x+y)-f(x)-f'(x)\sin y)\,d\phi_1(y).
\end{align*}
\end{defi}
As before, for functions $f$ with $f,f',f''\in C_0(\mathbb{R})\cap L^1(\mathbb{R})$ the negative Zolotarev semi-fractional derivative of order $\alpha=1$ exists and integration by parts yields the {\it Caputo form}
$$\frac{\partial_{\mathcal Z}}{\partial_{c,\theta}(-x)}\,f(x)=\int_{0+}^{\infty}\left(f'(x+y)-f'(x)\cos y\right)y^{-1}\theta(\log y)\,dy.$$

\section{semistable log-characteristic functions}

An alternative, equivalent definition of the semi-fractional derivative using the L\'evy-Khintchine representation \eqref{logchar} together with \eqref{gen2} is the following. Given $c>1$, $\alpha\in(0,2)\setminus\{1\}$ and a corresponding admissable function $\theta$, with our choice of the shift $a_1$ for $\alpha\in(0,1)$ and $a_3$ for $\alpha\in(1,2)$, we may define $\partial^\alpha f/\partial_{c,\theta}x^\alpha$ as the function with Fourier transform
\begin{equation}\label{FTsfd}
\widehat{\frac{\partial^\alpha f}{\partial_{c,\theta}x^\alpha}}(x)=\begin{cases}
\displaystyle -\widehat{L_1f}(x)=-\int_{0+}^\infty(e^{ixy}-1)\,d\phi_1(y)\,\widehat f(x) & \text{ if }\alpha\in(0,1),\\[2ex]
\displaystyle \widehat{L_3f}(x)=\int_{0+}^\infty(e^{ixy}-1-ixy)\,d\phi_1(y)\,\widehat f(x) & \text{ if }\alpha\in(1,2),
\end{cases}
\end{equation}
for suitable functions $f$. Analogously, with the shift $a_2$ for $\alpha\in(0,1)$ and $a_4$ for $\alpha\in(1,2)$, we can define $\partial^\alpha f/\partial_{c,\theta}(-x)^\alpha$ as the function with Fourier transform
\begin{equation}\label{FTnsfd}
\widehat{\frac{\partial^\alpha f}{\partial_{c,\theta}(-x)^\alpha}}(x)=\begin{cases}
\displaystyle -\widehat{L_2f}(x)=-\int_{0+}^\infty(e^{-ixy}-1)\,d\phi_1(y)\,\widehat f(x) & \text{ if }\alpha\in(0,1),\\[2ex]
\displaystyle \widehat{L_4f}(x)=\int_{0+}^\infty(e^{-ixy}-1+ixy)\,d\phi_1(y)\,\widehat f(x) & \text{ if }\alpha\in(1,2).
\end{cases}
\end{equation}
for suitable functions $f$, as well as the positive Zolotarev semi-fractional derivative for $\alpha=1$ (using the shift $a_5$) as the function with Fourier transform
\begin{align}\label{FTsfZd}
\widehat{\frac{\partial_{\mathcal{Z}} f}{\partial_{c,\theta}x^\alpha}} (x)=\widehat{L_{\mathcal Z}^+f}(x)=\int_{0+}^\infty(e^{ixy}-1-ix\sin(y))\,d\phi_1(y)\,\widehat f(x),
\end{align}
respectively
\begin{align}\label{FTnsfZd}
 \widehat{\frac{\partial_{\mathcal{Z}} f}{\partial_{c,\theta}(-x)^\alpha}} (x)=\widehat{L_{\mathcal Z}^-f}(x)=\int_{0+}^\infty(e^{-ixy}-1+ix\sin(y))\,d\phi_1(y)\,\widehat f(x) 
\end{align}
for the negative Zolotarev semi-fractional derivative.

Our aim is to reduce \eqref{FTsfd} -- \eqref{FTnsfZd} to simpler forms. Since the right-hand sides in \eqref{FTsfd} -- \eqref{FTnsfZd} coincide with $\mp\psi(x)\,\widehat f(x)$ for the log-characteristic functions $\psi$ without drift part, corresponding to the generators $L_1$ to $L_4$ in Definitions  \ref{generator1}--\ref{generator4} and $L_{\mathcal{Z}}^{\pm}$ in Definitions \ref{generator5} and \ref{generator6}, we will now derive a series representation of $\psi$ depending on the Fourier coefficients of the periodic function $\theta$, provided $\theta$ is a smooth admissable function. This will directly provide us with a series representation of the Fourier transform of semi-fractional derivatives and will also enable us to give a formula of Gr\"unwald-Letnikov type for the semi-fractional derivatives in Section 4.
Given $c>1$ and $\alpha\in(0,2)$, let $\theta$ be a smooth admissable function as in Definition \ref{admiss}. In this case for all $x\in\rr$ we have the Fourier series representation
\begin{equation}\label{Fseries}
\theta(x)=\sum_{k=-\infty}^{\infty} c_k e^{ik\tilde c x}\quad\text{ with }\quad \tilde{c}:=\frac{2\pi\alpha}{\log c}
\end{equation}
and the Fourier coefficients $(c_k)_{k\in\mathbb{Z}}\subseteq\mathbb{C}$ fulfill
$$\overline{c_{-k}}=c_k\quad\text{ and }\quad |c_k|\leq\frac{C}{k^2}\quad\text{ for all }k\in\ganz\setminus\{0\}\text{ and some }C>0;$$
e.g., see \cite{Fourier1} for details.

\subsection{Log-characteristic function for $\alpha\not=1$}

\begin{theorem}\label{psiserrep}
Given $c>1$, $\alpha\in(0,2)\setminus\{1\}$ and a corresponding smooth admissable function $\theta$ with Fourier series representation \eqref{Fseries}, let $\psi$ denote the log-characteristic function corresponding to the generator $L_1$ in Definition \ref{generator1} for $\alpha\in(0,1)$, respectively the generator $L_3$ in Definition \ref{generator3} for $\alpha\in(1,2)$. Then we have
\begin{equation}\label{Psi1}
\psi(x)=-\sum\limits_{k=-\infty}^{\infty} c_k\,\Gamma(ik\tilde{c}-\alpha+1)\,(-ix)^{\alpha-ik\tilde{c}}\quad\text{ for all }x\in\rr.
\end{equation}
\end{theorem}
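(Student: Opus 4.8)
The plan is to start from the integral representations of $\psi$ already recorded in \eqref{FTsfd}, namely $\psi(x)=\int_{0+}^\infty(e^{ixy}-1)\,d\phi_1(y)$ for $\alpha\in(0,1)$ and $\psi(x)=\int_{0+}^\infty(e^{ixy}-1-ixy)\,d\phi_1(y)$ for $\alpha\in(1,2)$, to reduce each to an ordinary Lebesgue integral against $y^{-\alpha}\theta(\log y)$, insert the Fourier series \eqref{Fseries}, and evaluate term by term. First I would integrate by parts, using that the tail is $\phi_1(y,\infty)=y^{-\alpha}\theta(\log y)$, so that $d\phi_1(y)=-\,d\bigl(y^{-\alpha}\theta(\log y)\bigr)$. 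The boundary terms vanish: as $y\to\infty$ because $\theta$ is bounded and $y^{-\alpha}\to0$, and as $y\downarrow0$ because $e^{ixy}-1=O(y)$ (respectively $e^{ixy}-1-ixy=O(y^2)$) kills the $y^{-\alpha}$ singularity for $\alpha<1$ (respectively $\alpha<2$). In both cases this yields $\psi(x)=ix\int_{0+}^\infty(e^{ixy}-b)\,y^{-\alpha}\theta(\log y)\,dy$, where $b=0$ for $\alpha\in(0,1)$ and $b=1$ for $\alpha\in(1,2)$.

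Next I would substitute $\theta(\log y)=\sum_k c_k\,y^{ik\tilde{c}}$, which is legitimate because $\sum_k|c_k|<\infty$ makes the Fourier series converge uniformly, and interchange summation with integration. Writing $\alpha_k:=\alpha-ik\tilde{c}$ reduces the computation to the single integral $\int_{0+}^\infty(e^{ixy}-b)\,y^{-\alpha_k}\,dy$. For $\alpha\in(1,2)$ the integrand is dominated by $|e^{ixy}-1|\,y^{-\alpha}\in L^1(0,\infty)$ uniformly in $k$ (note $|y^{ik\tilde{c}}|=1$), so the interchange is immediate by dominated convergence. For $\alpha\in(0,1)$ the integral $\int_0^\infty y^{-\alpha}\,dy$ diverges at infinity and there is no absolute convergence; here I would insert an Abel regularisation factor $e^{-\epsilon y}$, interchange for each fixed $\epsilon>0$ (now absolutely convergent), and pass to the limit $\epsilon\downarrow0$ at the very end.

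Then I would evaluate the inner integral by the complex Gamma integral $\int_0^\infty e^{-sy}y^{s'-1}\,dy=\Gamma(s')\,s^{-s'}$ with $s=-ix$, extended to $\mathrm{Re}\,s=0$, $x\neq0$, by analytic continuation; the principal branch is well defined since $-ix$ is purely imaginary and hence off the cut $(-\infty,0]$. For $\alpha\in(0,1)$ this gives directly $\int_0^\infty e^{ixy}y^{-\alpha_k}\,dy=\Gamma(1-\alpha_k)(-ix)^{\alpha_k-1}$. For $\alpha\in(1,2)$ one further integration by parts turns $\int_0^\infty(e^{ixy}-1)y^{-\alpha_k}\,dy$ into $-\tfrac{ix}{1-\alpha_k}\int_0^\infty e^{ixy}y^{1-\alpha_k}\,dy$, where now $\mathrm{Re}(2-\alpha_k)>0$ makes the Gamma integral applicable, and the functional equation $\Gamma(2-\alpha_k)=(1-\alpha_k)\Gamma(1-\alpha_k)$ again produces $\Gamma(1-\alpha_k)(-ix)^{\alpha_k-1}$. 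Multiplying by the prefactor $ix$ and using $ix\,(-ix)^{\alpha_k-1}=-(-ix)^{\alpha_k}$ together with $1-\alpha_k=ik\tilde{c}-\alpha+1$ collapses everything to the claimed series \eqref{Psi1}.

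The main obstacle I expect is justifying convergence rather than the algebra. The crux is that the series \eqref{Psi1} converges absolutely for a subtle reason: by Stirling on vertical lines $|\Gamma(ik\tilde{c}-\alpha+1)|$ decays like $e^{-\pi|k|\tilde{c}/2}$, while $|(-ix)^{\alpha-ik\tilde{c}}|=|x|^\alpha e^{k\tilde{c}\arg(-ix)}$ grows like $e^{\pi|k|\tilde{c}/2}$; these two exponentials cancel, leaving only a polynomial factor in $k$ which the bound $|c_k|\le C/k^2$ renders summable. This same estimate is exactly what lets me exchange the limit $\epsilon\downarrow0$ with the $k$-sum in the $\alpha\in(0,1)$ case by dominated convergence, so I would isolate it as the key lemma of the argument; everything else is bookkeeping of branch cuts and vanishing boundary terms.
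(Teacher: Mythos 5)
Your proposal is correct and essentially reproduces the paper's own proof: the paper likewise regularizes with an exponential factor (replacing $ix$ by $ix-\tfrac1n$), integrates by parts against the tail $y^{-\alpha}\theta(\log y)$, inserts the Fourier series, evaluates each mode by the complex Gamma integral from the Bateman tables, and passes to the limit using exactly the cancellation you isolate as the key estimate, namely $|\Gamma(ik\tilde c-\alpha+1)|\leq C|k|^{-\alpha+1/2}e^{-|k|\tilde c\pi/2}$ against the growth $e^{|k|\tilde c\pi/2}$ of the power term, with $|c_k|\leq C|k|^{-2}$ supplying summability. The only cosmetic difference is in the case $\alpha\in(1,2)$, where the paper performs the second integration by parts before expanding, via the function $\gamma$ of Lemma \ref{gamma} whose Fourier coefficients $c_k/(\alpha-1-ik\tilde c)$ encode precisely your per-mode factor $-1/(1-\alpha_k)$, rather than mode by mode followed by the functional equation $\Gamma(2-\alpha_k)=(1-\alpha_k)\Gamma(1-\alpha_k)$ as you do.
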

To prove Theorem \ref{psiserrep} we will need the following refinement of Lemma \ref{gamma1} for the case $\alpha\in(1,2)$.
\begin{lemma}\label{gamma}
Given $c>1$, $\alpha\in(1,2)$ and a corresponding smooth admissable function $\theta$ with Fourier series representation \eqref{Fseries}, the admissable function $\gamma$ from Lemma \ref{gamma1} is continuously differentiable with Fourier series representation
\begin{equation}\label{gammadef}
\gamma(x)=\sum\limits_{k=-\infty}^{\infty}\frac{c_k}{\alpha-1-ik\tilde c}\, e^{ik\tilde{c}x}\quad\text{ for all }x\in\rr.
\end{equation}
\end{lemma}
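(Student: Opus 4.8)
The plan is to reduce $\gamma$ to an integral over a half-line with a bounded-below lower limit, then insert the Fourier series of $\theta$ and integrate term by term. Starting from \eqref{gammadef1}, the substitution $u=\log y$ (so $dy=e^u\,du$) turns the defining integral into
$$\gamma(x)=e^{(\alpha-1)x}\int_x^\infty e^{(1-\alpha)u}\,\theta(u)\,du,$$
which is the form I would work with throughout. The factor $e^{(1-\alpha)u}$ decays as $u\to\infty$ precisely because $\alpha>1$, and this is exactly what makes every integral below convergent.

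Next I would substitute $\theta(u)=\sum_{k} c_k\,e^{ik\tilde c u}$ from \eqref{Fseries} and interchange summation and integration. This interchange is the one point that genuinely needs justification, and it is handled by absolute convergence: since $|c_k|\le C/k^2$ the series for $\theta$ converges absolutely and uniformly, and using $|e^{(1-\alpha+ik\tilde c)u}|=e^{(1-\alpha)u}$ one has
$$\sum_{k=-\infty}^{\infty}|c_k|\int_x^\infty\left|e^{(1-\alpha+ik\tilde c)u}\right|\,du=\Bigl(\sum_{k=-\infty}^{\infty}|c_k|\Bigr)\,\frac{e^{(1-\alpha)x}}{\alpha-1}<\infty,$$
so Fubini's theorem applies. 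Each resulting integral is elementary, and because the real part of $1-\alpha+ik\tilde c$ equals $1-\alpha<0$ we get
$$\int_x^\infty e^{(1-\alpha+ik\tilde c)u}\,du=-\frac{e^{(1-\alpha+ik\tilde c)x}}{1-\alpha+ik\tilde c}.$$
Multiplying back by $e^{(\alpha-1)x}$ collapses the exponent to $e^{ik\tilde c x}$, and after rewriting $-1/(1-\alpha+ik\tilde c)=1/(\alpha-1-ik\tilde c)$ this yields exactly \eqref{gammadef}.

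It then remains to read off continuous differentiability from the series just obtained. For $k\neq0$ one has $|\alpha-1-ik\tilde c|\ge|k|\tilde c$, so the Fourier coefficients of $\gamma$ satisfy $\left|c_k/(\alpha-1-ik\tilde c)\right|\le C/(\tilde c\,|k|^3)$. Consequently the formally differentiated series, whose coefficients $ik\tilde c\,c_k/(\alpha-1-ik\tilde c)$ are $O(1/k^2)$, converges absolutely and uniformly on $\rr$; hence the series in \eqref{gammadef} may be differentiated term by term and $\gamma\in C^1(\rr)$, as claimed. The only real obstacle is the Fubini justification in the middle step, and as shown it reduces to the summability of $(c_k)_{k\in\ganz}$ together with the sign of $1-\alpha$; everything else is a direct computation.
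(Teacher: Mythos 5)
Your proof is correct and takes essentially the same route as the paper's: both insert the Fourier series of $\theta$ into the integral defining $\gamma$, justify interchanging sum and integral by absolute convergence (using $\alpha>1$ and the summability of $(c_k)$), compute the resulting elementary integrals to obtain \eqref{gammadef}, and deduce $C^1$ regularity from the $O(|k|^{-3})$ decay of the coefficients $c_k/(\alpha-1-ik\tilde c)$. The only cosmetic differences are your preliminary substitution $u=\log y$ (the paper integrates in the original variable) and your self-contained term-by-term differentiation argument where the paper instead cites a standard theorem from its reference \cite{Fourier1}.
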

\begin{proof}
 For any $y>0$ we obtain by dominated convergence
 \begin{align*}
 \int_y^{\infty} x^{-\alpha}\theta(\log x)dx &=\int_y^{\infty} x^{-\alpha}\sum\limits_{k=-\infty}^{\infty} c_k e^{ik\tilde{c}\log(x)}dx=\int_y^{\infty}\sum\limits_{k=-\infty}^{\infty}c_k x^{-\alpha+ik\tilde{c}}\,dx\\ 
&=\sum\limits_{k=-\infty}^{\infty} \int_y^{\infty} c_k\,x^{-\alpha+ik\tilde{c}} dx
=-\sum\limits_{k=-\infty}^{\infty} c_k\, \frac{y^{-\alpha+ik\tilde{c}+1}}{ik\tilde{c}-\alpha+1}\\
	&=y^{-\alpha+1}\sum\limits_{k=-\infty}^{\infty} \frac{c_k}{\alpha-1-ik\tilde c}\, e^{ik\tilde{c}\log(y)}\end{align*}
showing \eqref{gammadef} by Lemma \ref{gamma1}. Since the admissable function $\theta$ is smooth we have $|c_k|\leq C\,k^{-2}$ for all $k\in\ganz\setminus\{0\}$ and some $C>0$. Thus we get
\begin{align*}
	\left|\frac{c_k}{\alpha-1-ik\tilde{c}}\right|
	=\frac{|c_k|}{\sqrt{(\alpha-1)^2+k^2\tilde{c}^2}}
	\leq \frac{|c_k|}{|k|\tilde{c}}\leq\frac{C}{|k|^3\tilde c},
	\end{align*}
which according to Theorem 2.6 in \cite{Fourier1} leads to continuous differentiability of $\gamma$. 
\end{proof}
\begin{proof}[Proof of Theorem \ref{psiserrep}]
We first consider the case $\alpha\in(0,1)$. For the log-characteristic function $\psi$ corresponding to the generator $L_1$ we obtain by dominated convergence and integration by parts
\begin{align*}
\psi(x)&=\int_{0+}^\infty(e^{ixy}-1)\,d\phi_1(y)
=\int_{0+}^{\infty}\lim\limits_{n\to\infty}\left(e^{\left(ix-\frac{1}{n}\right)y}-1\right)\,d\phi_1(y)\\
&=\lim\limits_{n\to\infty}\int_{0{+}}^{\infty}\left(e^{\left(ix-\frac{1}{n}\right)y}-1\right)\,d\phi_1(y)\\
&=\lim\limits_{n\to\infty}\int_{0+}^{\infty} \left(ix-\frac{1}{n}\right)e^{\left(ix-\frac{1}{n}\right)y}y^{-\alpha}\theta(\log y)\,dy
\end{align*}
Since $\theta$ is a smooth admissable function with Fourier series \eqref{Fseries} we have for some constant $C_1>0$
\begin{align*}
\left|e^{\left(ix-\frac{1}{n}\right)y}y^{-\alpha}\theta(\log y)\right| & \leq\sum_{k=-\infty}^\infty\left|e^{\left(ix-\frac{1}{n}\right)y}y^{-\alpha}c_k\, y^{ik\tilde c}\right|\\
& \leq \sum_{k=-\infty}^\infty e^{-y/n}y^{-\alpha}|c_k|\leq C_1e^{-y/n}y^{-\alpha}
\end{align*}
and thus by dominated convergence we get
\begin{equation}\label{philim}\begin{split}
\psi(x) & =\lim\limits_{n\to\infty} \left(ix-\frac{1}{n}\right)\sum\limits_{k=-\infty}^{\infty} c_k\int_{0+}^{\infty}e^{\left(ix-\frac{1}{n}\right)y}y^{-\alpha+ik\tilde{c}}\,dy\\
& =\lim\limits_{n\to\infty} \left(ix-\frac{1}{n}\right)\sum\limits_{k=-\infty}^{\infty} c_k\left(\frac{1}{n}-ix\right)^{-ik\tilde{c}+\alpha-1}\Gamma(ik\tilde{c}-\alpha+1),
\end{split}\end{equation}
where the last equality follows as on page 144 in \cite{Integral}, since $\Re(1-\alpha+ik\tilde c)=1-\alpha>0$. We further obtain for $k\in\ganz\setminus\{0\}$ using $|\Gamma(ik\tilde c-\alpha+1)|\leq C_2|k|^{-\alpha+1/2}e^{-|k|\tilde c\pi/2}$ as on page 20 of \cite{Stirling}
\begin{align*}
& \left|c_k\left(\frac{1}{n}-ix\right)^{-ik\tilde{c}+\alpha-1}\Gamma(ik\tilde{c}-\alpha+1)\right|\\
& \quad\leq |c_k| (x^2+n^{-2})^{\alpha/2}\exp\left(k\tilde c\,\arg\left(\frac1n-ix\right)\right)|\Gamma(ik\tilde{c}-\alpha+1)|\\
& \quad\leq C_2|c_k| (x^2+1)^{\alpha/2}e^{|k|\tilde c\pi/2}|k|^{-\alpha+1/2}e^{-|k|\tilde c\pi/2}\\
& \quad\leq C_2C(x^2+1)^{\alpha/2}|k|^{-\alpha-3/2},
\end{align*}
where the last inequality follows again by the smoothness assumption for $\theta$, since $|c_k|\leq C|k|^{-2}$ for some $C>0$. Thus the series in \eqref{philim} is absolutely convergent and by dominated convergence we get
\begin{align*}
\psi(x)&=-\sum\limits_{k=-\infty}^{\infty}c_k \lim\limits_{n\to\infty}\left(\frac{1}{n}-ix\right)^{\alpha-ik\tilde{c}}\Gamma(ik\tilde{c}-\alpha+1)\\
&=-\sum\limits_{k=-\infty}^{\infty}c_k (-ix)^{\alpha-ik\tilde{c}}\,\Gamma(ik\tilde{c}-\alpha+1),
\end{align*}
showing \eqref{Psi1} in case $\alpha\in(0,1)$.

Now we consider the case $\alpha\in(1,2)$. Since the proof is similar to the case $\alpha\in(0,1)$ we only list the main steps and skip the technical details. For the log-characteristic function $\psi$ without drift part, corresponding to the generator $L_3$, we obtain by dominated convergence and integration by parts
\begin{align*}
\psi(x)&=\int_{0+}^\infty(e^{ixy}-1-ixy)\,d\phi_1(y)\\
&=\int_{0+}^{\infty}\lim\limits_{n\to\infty}\left(e^{\left(ix-\frac{1}{n}\right)y}-1-\left(ix-\frac1n\right)y\right)\,d\phi_1(y)\\
&=\lim\limits_{n\to\infty}\int_{0+}^{\infty} \left(ix-\frac{1}{n}\right)\left(e^{\left(ix-\frac{1}{n}\right)y}-1\right)y^{-\alpha}\theta(\log y)\,dy.
\end{align*}
Let $\mu$ be the measure on $(0,\infty)$ with Lebesgue density $y\mapsto y^{-\alpha}\theta(\log y)$, then by Lemma \ref{gamma} we have $\mu(y,\infty)=y^{1-\alpha}\gamma(\log y)$ for a continuously differentiable admissable function $\gamma$. Thus, as in the case $\alpha\in(0,1)$ we get with integration by parts
\begin{align*}
\psi(x) & =\lim\limits_{n\to\infty} \left(ix-\frac{1}{n}\right)\int_{0+}^{\infty}\left(e^{\left(ix-\frac{1}{n}\right)y}-1\right)y^{-\alpha}\theta(\log y)\,d\mu(y)\\
& =\lim\limits_{n\to\infty}\left(ix-\frac{1}{n}\right)^2\int_{0+}^{\infty}e^{\left(ix-\frac{1}{n}\right)y}y^{1-\alpha}\gamma(\log y)\,dy.
\end{align*}
Since $\gamma$ is a smooth admissable function with Fourier series \eqref{gammadef} and $\alpha-1\in(0,1)$, we can proceed as above to obtain
\begin{align*}
\psi(x) &=\lim\limits_{n\to\infty} \left(ix-\frac{1}{n}\right)^2\sum\limits_{k=-\infty}^{\infty} \frac{c_k}{\alpha-1-ik\tilde c}\left(\frac{1}{n}-ix\right)^{-ik\tilde{c}+\alpha-2}\Gamma(ik\tilde{c}-\alpha+2)\\
&=-\sum\limits_{k=-\infty}^{\infty}c_k \lim\limits_{n\to\infty}\left(\frac{1}{n}-ix\right)^{\alpha-ik\tilde{c}}\Gamma(ik\tilde{c}-\alpha+1)\\
&=-\sum\limits_{k=-\infty}^{\infty}c_k (-ix)^{\alpha-ik\tilde{c}}\,\Gamma(ik\tilde{c}-\alpha+1)
\end{align*}
concluding the proof.
\end{proof}
\begin{remark}\label{FTreps}
According to \eqref{Psi1}, we can also represent the log-characteristic function as 
$$\psi(x)=-|x|^\alpha h(x)\quad\text{ for all }x\in\rr,$$
where
$$h(x)=\sum\limits_{k=-\infty}^{\infty} c_k\,\Gamma(ik\tilde{c}-\alpha+1)\,(-i \sgn(x))^{\alpha-ik\tilde{c}}|x|^{-ik\tilde c}$$
showing that $h$ is bounded  and $x\mapsto h(e^x)$ and $x\mapsto h(-e^x)$ are continuous and $\log(c^{1/\alpha})$-periodic functions. In particular we have $\psi(0)=-\lim_{x\to0}|x|^\alpha h(\log|x|)=0$. Together with \eqref{FTsfd} we may also write for $x\in\rr$
\begin{equation}\label{FTsfdalt}
\widehat{\frac{\partial^\alpha f}{\partial_{c,\theta}x^\alpha}}(x)=\sum\limits_{k=-\infty}^{\infty} \omega_k\,(-ix)^{\alpha-ik\tilde{c}}\widehat f(x),
\end{equation}
where
$$\omega_k:=\begin{cases}c_k\Gamma(ik\tilde c-\alpha+1) & \text{ if }\alpha\in(0,1),\\
-c_k\Gamma(ik\tilde c-\alpha+1) & \text{ if }\alpha\in(1,2).
\end{cases}$$
Equation \eqref{FTnsfd} shows that for the Fourier transform of the negative semi-fractional derivative, due to reflection, we simply have to replace $x$ by $-x$ in \eqref{Psi1}, so that
\begin{equation}\label{FTnsfdalt}
\widehat{\frac{\partial^\alpha f}{\partial_{c,\theta}{(-x)}^\alpha}}(x)=\sum\limits_{k=-\infty}^{\infty} \omega_k\,(ix)^{\alpha-ik\tilde{c}}\widehat f(x).
\end{equation}
\end{remark}

\subsection{Log-characteristic function for $\alpha=1$}

\begin{theorem}\label{psiserrepZ}
Given $c>1$, $\alpha=1$ and a corresponding smooth admissable function $\theta$ with Fourier series representation \eqref{Fseries}, let $\psi$ denote the log-characteristic function corresponding to the generator $L_{\mathcal Z}^+$ in Definition \ref{generator5} with $\widehat{L_{\mathcal Z}^+f}=\psi\cdot\widehat f$ as in \eqref{FTsfZd}. Then we have
\begin{equation}\label{Psi1Z}
\psi(x)=-\sum\limits_{k\in\ganz\setminus\{0\}} c_k\,\Gamma(ik\tilde{c})\left((-ix)^{1-ik\tilde{c}}+ ix\,\cos\left(\frac{\pi}{2}ik\tilde{c}\right)\right)-c_0\,ix\log(-ix)
\end{equation}
for all $x\in\mathbb{R}$ where we define $0\cdot\log(0):=0$. 
\end{theorem}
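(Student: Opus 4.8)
The plan is to mirror the proof of Theorem \ref{psiserrep}, adapting it to the pole of $\Gamma$ at $0$ that occurs when $\alpha=1$. Starting from \eqref{FTsfZd}, I would first regularize by
\begin{equation*}
\psi(x)=\lim_{n\to\infty}\int_{0+}^\infty\left(e^{(ix-\frac1n)y}-1-\left(ix-\tfrac1n\right)\sin y\right)d\phi_1(y),
\end{equation*}
justified by dominated convergence since the integrand is $O(y^2)$ near $0$ (so integrable against $d\phi_1$, whose density behaves like $y^{-2}$ there) and bounded near $\infty$. Because the Zolotarev integrand vanishes to second order at the origin, both boundary terms in an integration by parts against the tail $\phi_1(y,\infty)=y^{-1}\theta(\log y)$ drop out, and with $g'(y)=(ix-\tfrac1n)\bigl(e^{(ix-1/n)y}-\cos y\bigr)$ one obtains
\begin{equation*}
\psi(x)=\lim_{n\to\infty}\left(ix-\tfrac1n\right)\int_{0+}^\infty\left(e^{(ix-\frac1n)y}-\cos y\right)y^{-1}\theta(\log y)\,dy.
\end{equation*}
Here the ballistic shift $\sin y$ is exactly what produces the $\cos y$ that keeps the integral finite at the origin once $\theta$ is expanded; note, however, that this $\cos y$ term is only conditionally integrable at infinity (Dirichlet's test), in contrast to the absolutely convergent situation of Theorem \ref{psiserrep}.

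Writing $s:=\tfrac1n-ix$ and inserting the Fourier series $\theta(\log y)=\sum_k c_k\,y^{ik\tilde c}$, the task reduces to evaluating, for each $k$,
\begin{equation*}
I_k:=\int_{0+}^\infty\left(e^{-sy}-\cos y\right)y^{-1+ik\tilde c}\,dy .
\end{equation*}
For $k\neq0$ I would split $\cos y=\tfrac12(e^{-iy}+e^{iy})$ and use the difference form of the Gamma integral $\int_{0+}^\infty(e^{-s_1 y}-e^{-s_2 y})y^{z-1}\,dy=\Gamma(z)(s_1^{-z}-s_2^{-z})$ with $z=ik\tilde c$. Since $i^{-z}=e^{-i\pi z/2}$ and $(-i)^{-z}=e^{i\pi z/2}$, the two exponential contributions of $\cos y$ collapse to $\tfrac12(i^{-z}+(-i)^{-z})=\cos(\tfrac\pi2 z)$, giving $I_k=\Gamma(ik\tilde c)\bigl(s^{-ik\tilde c}-\cos(\tfrac\pi2 ik\tilde c)\bigr)$. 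Multiplying by $ix-\tfrac1n=-s$ and letting $s\to-ix$ then produces precisely the $k\neq0$ summands $-c_k\Gamma(ik\tilde c)\bigl((-ix)^{1-ik\tilde c}+ix\cos(\tfrac\pi2 ik\tilde c)\bigr)$ of \eqref{Psi1Z}.

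The main obstacle is the term $k=0$, where $z=ik\tilde c=0$ sits exactly at the pole of $\Gamma$; this degeneracy is the source of the logarithmic term. I would extract it by expanding $\Gamma(z)\bigl(s^{-z}-\cos(\tfrac\pi2 z)\bigr)$ as $z\to0$ via $\Gamma(z)=z^{-1}+O(1)$ and $s^{-z}-\cos(\tfrac\pi2 z)=-z\log s+O(z^2)$, which yields $I_0=-\log s$ (equivalently a direct cosine-integral/Frullani evaluation of $\int_{0+}^\infty(e^{-sy}-\cos y)y^{-1}dy$). Multiplying by $-s$ and passing to the limit gives the contribution $-c_0\,ix\log(-ix)$, with the convention $0\cdot\log0:=0$ covering $x=0$.

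Finally I would justify the interchanges of summation, integration, and the $n\to\infty$ limit. Two points require care. First, because of the conditionally convergent $\cos y$ tail, the Gamma difference formula is applied on the critical line $\Re z=0$ with a purely imaginary $s_2=\pm i$; I would secure it by a vanishing regularization $s_2\to s_2+\varepsilon$, $\varepsilon\downarrow0$, the limit of the oscillatory integral being controlled by Abel's theorem. Second, the termwise bound for the limit is as in Theorem \ref{psiserrep}: both $\cos(\tfrac\pi2 ik\tilde c)=\cosh(\tfrac\pi2 k\tilde c)$ and $|(-ix)^{-ik\tilde c}|$ grow like $e^{\pi|k|\tilde c/2}$, exactly compensated by the Stirling estimate $|\Gamma(ik\tilde c)|\le C|k|^{-1/2}e^{-\pi|k|\tilde c/2}$, so together with $|c_k|\le C|k|^{-2}$ the summands are $O(|k|^{-5/2})$, the series converges absolutely, and the termwise passage to the limit is legitimate, establishing \eqref{Psi1Z}.
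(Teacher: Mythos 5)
Your proposal is correct, but it follows a genuinely different route from the paper at the decisive technical step. The paper does not regularize with $e^{(ix-1/n)y}$ alone: it replaces $y$ by $y^{1+1/n}$ inside both the exponential and the sine, so that after the substitution $z=y^{1+1/n}$ the Mellin exponent becomes $\tfrac{1}{n+1}+\tfrac{n}{n+1}ik\tilde c$, which lies strictly inside the strip $0<\Re<1$. There the exponential and cosine pieces can be separated and evaluated by standard table integrals (the Gamma integral on p.~144 of \cite{Integral} and the Mellin transform $\int_0^\infty\cos(z)\,z^{w-1}dz=\Gamma(w)\cos(\pi w/2)$ on p.~319), with no convergence issue at the origin; the logarithmic term then emerges from the asymptotics $\Gamma\bigl(\tfrac1{n+1}\bigr)\bigl((\tfrac1n-ix)^{-1/(n+1)}-\cos(\tfrac{\pi}{2(n+1)})\bigr)\to-\log(-ix)$ as $n\to\infty$. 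You instead keep the exponent on the critical line $\Re z=0$, where neither piece converges separately at the origin, and you compensate by (i) retaining the difference structure $e^{-sy}-\cos y$ throughout, (ii) extending the difference form of the Gamma integral to the purely imaginary arguments $s_2=\pm i$ by vanishing regularization and Abel limits, and (iii) evaluating the $k=0$ term exactly as the Frullani-type integral $I_0=-\log s$, so the logarithm appears by mere continuity of $s\mapsto s\log s$ at $s=-ix$ rather than from a Gamma asymptotic. Both routes are sound, and your Stirling bound $|\Gamma(ik\tilde c)|\le C|k|^{-1/2}e^{-\pi|k|\tilde c/2}$ against the $e^{\pi|k|\tilde c/2}$ growth of $\cosh(\tfrac{\pi}{2}k\tilde c)$ and of $|s^{-ik\tilde c}|$ is exactly the estimate the paper uses for the termwise passage to the limit. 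What the paper's exponent perturbation buys is that every integral identity is invoked in the open strip where it is a classical, separately convergent table formula; what your approach buys is a much cleaner $k=0$ analysis. One point you should spell out (the paper is equally terse about it): interchanging $\sum_k c_k$ with the integral is not plain Fubini here, because the $\cos y$ tails are only conditionally convergent; it needs a uniform-in-$k$ tail estimate, e.g.\ integration by parts giving $\bigl|\int_R^\infty\cos y\,y^{-1+ik\tilde c}\,dy\bigr|\le C(1+|k|)/R$, which is summable against $|c_k|\le C|k|^{-2}$ and so vanishes as $R\to\infty$.
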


\begin{proof}
Since the statement is obviously true for $x=0$, we only consider $x\neq 0$. 
 The log-characteristic function $\psi$ corresponding to the generator $L_{\mathcal Z}^+$ is given by
 \begin{align*}
 \psi(x)&=\int_{\mathbb{R}}\left(e^{ixy}-1-ix\sin(y)\right)d\phi_1(y)\\
 &=\lim\limits_{n\to\infty} \int_{0+}^{\infty}\left(e^{\left(ix-\frac{1}{n}\right)y^{1+1/n}}-1-\left(ix-\frac{1}{n}\right)\sin(y^{1+1/n})\right)d\phi_1(y)\\
 &=\lim\limits_{n\to\infty}\left(1+\frac{1}{n}\right)\left(ix-\frac{1}{n}\right)\int_{0+}^{\infty}y^{1/n}\left(e^{\left(ix-\frac{1}{n}\right)y^{1+1/n}}-\cos(y^{1+1/n})\right)y^{-1}\theta(\log\; y)dy
 \end{align*}
 for every $x\in\mathbb{R}\setminus\{0\}$ using dominated convergence and integration by parts. To solve the first part of the integral above, for all $n\in\nat$ we obtain by dominated convergence and a change of variables $z=y^{1+1/n}$
 \begin{align*}
 I_1^n(x):&=\left(1+\frac{1}{n}\right)\int_{0+}^{\infty}y^{\frac{1}{n}}e^{\left(ix-\frac{1}{n}\right)y^{1+\frac{1}{n}}}y^{-1}\theta(\log\; y)\,dy\\
 & =\sum\limits_{k=-\infty}^{\infty}c_k\int_{0+}^{\infty}e^{\left(ix-\frac{1}{n}\right)z}z^{\frac1{n+1}+\frac{n}{n+1}ik\tilde c-1}\,dz\\
 &=\sum\limits_{k=-\infty}^{\infty}c_k\left(\frac{1}{n}-ix\right)^{-\frac{1}{n+1}-ik\tilde{c}\frac{n}{n+1}}\Gamma\left(\frac{1}{n+1}+ik\tilde{c}\frac{n}{n+1}\right),
 \end{align*}
where the last equality follows from page 144 in \cite{Integral}, since $\Re(\frac1{n+1}+\frac{n}{n+1}ik\tilde c)=\frac{1}{n+1}>0$. Similarly, for the remaining part of the integral we obtain
  \begin{align*}
 I_2^n:&=\left(1+\frac{1}{n}\right)\int_{0+}^{\infty}y^{\frac{1}{n}}\cos(y^{1+\frac{1}{n}})y^{-1}\theta(\log\; y)\,dy\\
 &= \sum\limits_{k=-\infty}^{\infty}c_k\int_{0+}^{\infty}\cos(z)\,z^{\frac1{n+1}+\frac{n}{n+1}ik\tilde c-1}\,dz.
 \end{align*}
Since $\Re\left(\frac1{n+1}+\frac{n}{n+1}ik\tilde c)\right)=\frac1{n+1}\in(0,1)$, we get
\begin{align*}
I_2^n&=\sum\limits_{k=-\infty}^{\infty}c_k\Gamma\left(\frac{1}{n+1}+\frac{n}{n+1}ik\tilde{c}\right)\cos\left(\frac{\pi}{2}\left(\frac{1}{n+1}+\frac{n}{n+1}ik\tilde{c}\right)\right)
\end{align*}
according to page 319 in \cite{Integral}. Combining these two results, the log-characteristic function reads as follows
\begin{align*}
 \psi(x)=&\lim\limits_{n\to\infty}\left(ix-\frac{1}{n}\right)(I_1^n(x)-I_2^n)\\
 =&\sum\limits_{k=-\infty}^{\infty} c_k\lim\limits_{n\to\infty} \left(ix-\frac{1}{n}\right)\Gamma\left(\frac{1}{n+1}+\frac{n}{n+1}ik\tilde{c}\right)\\
 &\left(\left(\frac{1}{n}-ix\right)^{-\frac{1}{n+1}-ik\tilde{c}\frac{n}{n+1}}-\cos\left(\frac{\pi}{2}\left(\frac{1}{n+1}+\frac{n}{n+1}ik\tilde{c}\right)\right)\right)
\end{align*}
where the last equation follows with dominated convergence. To compute the limit for $k=0$ we make use of the fact that
\begin{align*}
 \cos\left(\frac{\pi}{2(n+1)}\right)&=1-O(n^{-2})\quad\text{and}\quad
 \left(\frac{1}{n}-ix\right)^{-\frac{1}{n+1}}=1-\frac{\log(-ix)}{n}+O(n^{-2}).
\end{align*}
Hence, since $\Gamma(\frac1{n+1})=(n+1)\Gamma(1+\frac1{n+1})$ we get
$$\Gamma\left(\frac{1}{n+1}\right)\left(\left(\frac{1}{n}-ix\right)^{-\frac{1}{n+1}}-\cos\left(\frac{\pi}{2(n+1)}\right)\right)\to -\log(-ix)$$
as $n\to\infty$. Since the gamma function is continuous on $\mathbb{C}\setminus\{-1,-2,\ldots\}$, alltogether we obtain
\begin{align*}
 \psi(x)=&-\sum\limits_{k\in\ganz\setminus\{0\}} c_k \Gamma\left(ik\tilde{c}\right)\left(\left(-ix\right)^{1-ik\tilde{c}}+ix\,\cos\left(\frac{\pi}{2}ik\tilde{c}\right)\right)\\
 &+c_0\lim\limits_{n\to\infty} \left(ix-\frac{1}{n}\right)\Gamma\left(\frac{1}{n+1}\right)\left(\left(\frac{1}{n}-ix\right)^{-\frac{1}{n+1}}-\cos\left(\frac{\pi}{2(n+1)}\right)\right)\\
 &=-\sum\limits_{k\in\ganz\setminus\{0\}} c_k \Gamma\left(ik\tilde{c}\right)\left(\left(-ix\right)^{1-ik\tilde{c}}+ix\,\cos\left(\frac{\pi}{2}ik\tilde{c}\right)\right)-c_0\,ix\log(-ix).
\end{align*}
for every $x\in\mathbb{R}\setminus\{0\}$ concluding the proof. 
\end{proof}

\begin{remark}
 Analogously to Remark \ref{FTreps} we may define
 \begin{align}\label{omega1Z}
  \omega_{k,1}:=\begin{cases} -c_k\Gamma(ik\tilde{c}) &\textrm{ if } k\neq 0\\
                 0 &\textrm{ if } k= 0\\
                \end{cases}
 \end{align}
 and
 \begin{align}\label{omega2Z}
  \omega_{k,2}:=\begin{cases} c_k\Gamma(ik\tilde{c})\cos\left(\frac{\pi}{2}ik\tilde{c}\right) &\textrm{ if } k\neq 0\\
                 0 &\textrm{ if } k= 0\\
                \end{cases}
 \end{align}
so that for suitable functions $f$ by \eqref{FTsfZd} we can write
 \begin{align}\label{FTsfZdalt}
  \widehat{\frac{\partial_{\mathcal{Z}} f}{\partial_{c,\theta}x^\alpha}} (x)=\left(\sum\limits_{k\in\ganz\setminus\{0\}} \omega_{k,1}\left(-ix\right)^{1-ik\tilde{c}}+\omega_{k,2}(-ix) +c_0(-ix)\log(-ix)\right)\widehat{f}(x).
 \end{align}
A comparison of \eqref{FTsfZd} and \eqref{FTnsfZd} shows that for the Fourier transform of the negative Zolotarev semi-fractional derivative, due to reflection, we simply have to replace $x$ by $-x$ in \eqref{Psi1Z}, so that
 \begin{align}\label{FTnsfZdalt}
  \widehat{\frac{\partial_{\mathcal{Z}} f}{\partial_{c,\theta}(-x)^\alpha}} (x)=\left(\sum\limits_{k\in\ganz\setminus\{0\}} \omega_{k,1}\left(ix\right)^{1-ik\tilde{c}}+\omega_{k,2}\,ix +c_0\,ix\log(ix)\right)\widehat{f}(x).
 \end{align}
\end{remark}

\subsection{Continuity at $\alpha=1$ of log-characteristic functions} 

Let $(\alpha_n)_{n\in\nat}$ be a sequence in $(0,2)\setminus\{1\}$ with $\alpha_n\to1$. Define $\tilde c_n:=2\pi\alpha_n/\log c$ and assume that
\begin{equation}\label{thetan}
\theta_n(x):=\sum_{k\in\ganz}c_k\,e^{ik\tilde c_nx},\quad x\in\rr,
\end{equation}
is a sequence of smooth admissable functions with respect to $c>1$ and $\alpha_n\not=1$. For the corresponding log-characteristic functions we write
\begin{equation}\label{psin}
\psi_n(x)=-\sum\limits_{k=-\infty}^{\infty} c_k\,\Gamma(ik\tilde c_n-\alpha_n+1)\,(-ix)^{\alpha_n-ik\tilde c_n}
\end{equation}
according to Theorem \ref{psiserrep}. Clearly, we have for all $x\in\rr$
\begin{equation}\label{thetanconv}
\theta_n(x)\to\sum_{k\in\ganz}c_k\,e^{ik\tilde cx}=:\theta(x),
\end{equation}
where $\tilde c=2\pi/\log c$ and $\theta$ is a smooth admissable function with respect to $c>1$ and $\alpha=1$. Hence, according to Theorem \ref{psiserrepZ} the corresponding log-characteristic function is given by
\begin{equation}\label{psiZ}
\psi_{\mathcal Z}(x)=-\sum\limits_{k\in\ganz\setminus\{0\}} c_k\,\Gamma(ik\tilde{c})\left((-ix)^{1-ik\tilde{c}}+ ix\,\cos\left(\frac{\pi}{2}ik\tilde{c}\right)\right)-c_0\,ix\log(-ix).
\end{equation}
We will now show that appropriate shifts of $\psi_n$ converge to $\psi_{\mathcal Z}$ and thus get a certain continuity result as $\alpha\to1$ for the log-characteristic functions, which transfers to weak convergence of the corresponding semistable  distributions by L\'evy's continuity theorem. 
\begin{theorem}
For all $n\in\nat$ define
\begin{equation}\label{shifts}
d_n:=\sum_{k\in\ganz}c_k\,\Gamma(ik\tilde c_n-\alpha_n+1)\,\cos\left(\frac{\pi}{2}(ik\tilde c_n-\alpha_n+1)\right)
\end{equation}
then for all $x\in\rr$ as $n\to\infty$ we have
\begin{equation}\label{psiconv}
\psi_n(x)+ix\,d_n\to\psi_{\mathcal Z}(x).
\end{equation}
Further, the shifts $d_n$ are representable as 
\begin{equation}\label{shiftrep}
d_n=\begin{cases}
\displaystyle\int_{0+}^\infty\cos(x)\,x^{-\alpha_n}\theta_n(\log x)\,dx & \text{ if }\alpha_n\in(0,1),\\[2ex]
\displaystyle\int_{0+}^\infty(\cos(x)-1)\,x^{-\alpha_n}\theta_n(\log x)\,dx & \text{ if }\alpha_n\in(1,2).
\end{cases}
\end{equation}
\end{theorem}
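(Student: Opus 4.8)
My plan is to treat the two assertions separately, establishing the drift representation \eqref{shiftrep} first since it exposes the role of $d_n$. Substituting the Fourier series \eqref{thetan} into \eqref{shifts} and interchanging summation with the integral (legitimate because $|c_k|\le C|k|^{-2}$ makes the series for $\theta_n$ absolutely convergent), I would recognise each summand as a Mellin transform of the cosine: with $s=ik\tilde c_n-\alpha_n+1$ one has $\int_{0+}^\infty\cos(x)\,x^{s-1}\,dx=\Gamma(s)\cos\bigl(\tfrac{\pi}{2}s\bigr)$ for $\Re s\in(0,1)$, which covers $\alpha_n\in(0,1)$. For $\alpha_n\in(1,2)$ we have $\Re s=1-\alpha_n\in(-1,0)$, outside the strip of convergence, so one must instead invoke the once-subtracted transform $\int_{0+}^\infty(\cos(x)-1)\,x^{s-1}\,dx=\Gamma(s)\cos\bigl(\tfrac{\pi}{2}s\bigr)$, the analytic continuation valid on $(-1,0)$, which accounts precisely for the extra $-1$ in the second line of \eqref{shiftrep}. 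Summing the resulting closed forms over $k$ reconstitutes $x^{-\alpha_n}\theta_n(\log x)$ inside the integral and yields \eqref{shiftrep}.

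Next I would record the probabilistic content of this identity, which drives the whole result. An integration by parts in the Stieltjes integral against $\phi_1$, whose boundary terms vanish because $\sin(y)\,y^{-\alpha_n}\theta_n(\log y)$ and $(y-\sin y)\,y^{-\alpha_n}\theta_n(\log y)$ tend to $0$ both as $y\downarrow0$ and as $y\to\infty$ (using the growth restriction \eqref{growing} together with $1-\alpha_n>0$, resp.\ $3-\alpha_n>0$), shows that $d_n=\int_{0+}^\infty\sin(y)\,d\phi_1(y)$ when $\alpha_n\in(0,1)$ and $d_n=-\int_{0+}^\infty(y-\sin y)\,d\phi_1(y)$ when $\alpha_n\in(1,2)$. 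In either case $d_n$ is exactly the drift that recenters the exponent $\psi_n$, which carries the centering $a_1$ or $a_3$, to the Zolotarev centering $a_5$; concretely $\psi_n(x)-ix\,d_n=\int_{0+}^\infty(e^{ixy}-1-ix\sin y)\,d\phi_1(y)$, the exponent of \eqref{FTsfZd}, the sign of the correction being dictated by the passage to $a_5$. This reduces \eqref{psiconv} to showing that this Zolotarev-centered exponent converges to $\psi_{\mathcal Z}$ as $\alpha_n\to1$, which is the structural reason the limit must be $\psi_{\mathcal Z}$.

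To establish this convergence I would work directly with the series \eqref{psin} and \eqref{shifts}, grouping the drift correction termwise and passing to the limit $n\to\infty$ in each coefficient of $c_k$. For every fixed $k\neq0$ the argument $ik\tilde c_n-\alpha_n+1$ tends to $ik\tilde c$, a point of continuity of $\Gamma$, so the $k$-th term converges to the matching term of $\psi_{\mathcal Z}$ in \eqref{psiZ}; here I would also use $(-ix)^{\alpha_n-ik\tilde c_n}\to(-ix)^{1-ik\tilde c}$ and the elementary identity $-(-ix)^{1-ik\tilde c}=ix\,(-ix)^{-ik\tilde c}$ to reconcile the shapes of \eqref{psin} and \eqref{Psi1Z}. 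The interchange of limit and summation I would justify by dominated convergence, using the Stirling-type bound $|\Gamma(ik\tilde c_n-\alpha_n+1)|\le C|k|^{-\alpha_n+1/2}e^{-|k|\tilde c_n\pi/2}$ from \cite{Stirling} already employed in the proof of Theorem \ref{psiserrep}, together with $|(-ix)^{\alpha_n-ik\tilde c_n}|\le|x|^{\alpha_n}e^{|k|\tilde c_n\pi/2}$ and $\bigl|\cos\bigl(\tfrac{\pi}{2}(ik\tilde c_n-\alpha_n+1)\bigr)\bigr|\le\cosh\bigl(\tfrac{\pi}{2}|k|\tilde c_n\bigr)\le e^{|k|\tilde c_n\pi/2}$. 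The competing exponentials cancel, and with $|c_k|\le C|k|^{-2}$ the $k$-th summand is dominated by a constant multiple of $(1+|x|)\,|k|^{-\alpha_n-3/2}$, summable uniformly for $\alpha_n$ in a neighbourhood of $1$ since $\tilde c_n\to\tilde c$.

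The single obstacle is the term $k=0$, where the argument $1-\alpha_n$ meets the simple pole of $\Gamma$ at the origin and the termwise reasoning fails; this is exactly the mechanism that produces the logarithm in $\psi_{\mathcal Z}$. I would handle it by the same Laurent/Taylor expansion used in the proof of Theorem \ref{psiserrepZ}: writing $\Gamma(1-\alpha_n)=(1-\alpha_n)^{-1}+O(1)$, $(-ix)^{\alpha_n}=(-ix)\bigl(1+(\alpha_n-1)\log(-ix)+O((\alpha_n-1)^2)\bigr)$ and $\cos\bigl(\tfrac{\pi}{2}(1-\alpha_n)\bigr)=1+O((1-\alpha_n)^2)$, the two $O(1)$ contributions to the bracket cancel, the simple pole then multiplies the surviving $O(\alpha_n-1)$ term, and the $k=0$ contribution converges to $-c_0\,ix\log(-ix)$, matching the isolated term in \eqref{psiZ}. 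Assembling the termwise limits for $k\neq0$, this special limit for $k=0$, and the dominated-convergence bound then completes \eqref{psiconv}.
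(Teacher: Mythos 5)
Your proposal follows essentially the same route as the paper: \eqref{shiftrep} is obtained from the cosine Mellin transform $\int_{0+}^\infty \cos(x)\,x^{s-1}\,dx=\Gamma(s)\cos(\tfrac{\pi}{2}s)$ for $\Re s\in(0,1)$ and its once-subtracted continuation for $\Re s\in(-1,0)$ (the paper cites exactly these formulas from Bateman's tables, plus dominated convergence), and \eqref{psiconv} is proved termwise in the series \eqref{psin} and \eqref{shifts}: dominated convergence plus continuity of $\Gamma$ for $k\neq0$, and a pole-cancellation expansion at $k=0$ producing $-c_0\,ix\log(-ix)$ — the paper's computation via difference quotients is the same expansion in different clothing. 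Your middle paragraph (integration by parts against $\phi_1$, identifying $d_n=\int_{0+}^\infty\sin y\,d\phi_1(y)$ for $\alpha_n\in(0,1)$, resp. $d_n=\int_{0+}^\infty(\sin y-y)\,d\phi_1(y)$ for $\alpha_n\in(1,2)$, so that the drift correction recenters $\psi_n$ to the Zolotarev centering $a_5$) is an addition not present in the paper, and a genuinely illuminating one.

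But note what that step reveals, and what you never flag: the statement you actually reduce to and prove is $\psi_n(x)-ix\,d_n\to\psi_{\mathcal Z}(x)$, with a minus sign, not \eqref{psiconv} as printed. Your identity $\psi_n(x)-ix\,d_n=\int_{0+}^\infty(e^{ixy}-1-ix\sin y)\,d\phi_1(y)$ and your termwise matching for $k\neq0$ (where the cosine terms must enter with a minus to reproduce the terms of \eqref{psiZ}) both carry this sign. The printed statement with $+ix\,d_n$ is in fact false: for constant $\theta_n\equiv c_0$ one has $\psi_n(x)+ix\,d_n=-c_0\Gamma(1-\alpha_n)\left((-ix)^{\alpha_n}-ix\cos\bigl(\tfrac{\pi}{2}(1-\alpha_n)\bigr)\right)$, whose bracket tends to $-2ix\neq0$ while $\Gamma(1-\alpha_n)$ blows up, so the expression diverges for every $x\neq0$. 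You are in the same boat as the authors: the paper's own reduction (``it remains to show $c_0\Gamma(1-\alpha_n)(-ix)^{\alpha_n}+ix\,d_n\to\cdots$'') is equivalent to the $-ix\,d_n$ version of \eqref{psiconv}, not to the version stated, so the paper's proof also silently establishes the corrected statement. In short: your method is the same as the paper's, what you prove is correct and coincides with what the paper actually proves, and both share the uncorrected sign typo in \eqref{psiconv} (equivalently, $d_n$ should be defined with an extra minus sign) — a typo your own recentering argument was perfectly positioned to catch explicitly.
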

\begin{proof}
First note that $\tilde c_n\to\tilde c$ as $\alpha_n\to1$ and thus by dominated convergence we get for all $x\in\rr$
$$\sum\limits_{k\in\ganz\setminus\{0\}} c_k\,\Gamma(ik\tilde c_n-\alpha_n+1)\,(-ix)^{\alpha_n-ik\tilde c_n}\to\sum\limits_{k\in\ganz\setminus\{0\}} c_k\,\Gamma(ik\tilde c)\,(-ix)^{1-ik\tilde c}.$$
Hence to prove \eqref{psiconv} it remains to show
$$c_0\Gamma(1-\alpha_n)(-ix)^{\alpha_n}+ix\,d_n\to ix\sum_{k\in\ganz\setminus\{0\}}c_k\,\Gamma(ik\tilde c)\,\cos\left(\frac{\pi}{2}ik\tilde c)\right)+c_0ix\log(-ix),$$
which according to \eqref{shifts} reduces to
$$\Gamma(1-\alpha_n)\left((-ix)^{\alpha_n}+ix\cos\left(\frac{\pi}{2}(1-\alpha_n)\right)\right)\to ix\log(-ix).$$
For the latter we observe
\begin{align*}
& \Gamma(1-\alpha_n)\left((-ix)^{\alpha_n}+ix\cos\left(\frac{\pi}{2}(1-\alpha_n)\right)\right)\\
& \quad=ix\,\Gamma(2-\alpha_n)\,\frac{-(-ix)^{\alpha_n-1}+\cos\left(\frac{\pi}{2}(1-\alpha_n)\right)}{1-\alpha_n}\\
& \quad=ix\,\Gamma(2-\alpha_n)\left(\frac{e^{(\alpha_n-1)\log(-ix)}-1}{\alpha_n-1}+\frac{\cos\left(\frac{\pi}{2}(1-\alpha_n)\right)-1}{1-\alpha_n}\right)\\
& \quad\to ix\,\Gamma(1)\left(\left.\frac{d}{dt}\,e^{t\log(-ix)}\right|_{t=0}+\left.\frac{d}{dt}\,\cos\left(\frac{\pi}{2}\,t\right)\right|_{t=0}\right)=ix\log(-ix),
\end{align*}
concluding the proof of \eqref{psiconv}. Further, for $\alpha_n\in(0,1)$ an application of (21) on page 319 of \cite{Integral} to \eqref{shifts} and dominated convergence shows
\begin{align*}
d_n & =\sum_{k\in\ganz}c_k\,\Gamma(ik\tilde c_n-\alpha_n+1)\,\cos\left(\frac{\pi}{2}(ik\tilde c_n-\alpha_n+1)\right)\\
& =\sum_{k\in\ganz}c_k\int_{0+}^\infty\cos(x)\,x^{-\alpha_n+ik\tilde c_n}\,dx=\int_{0+}^\infty\cos(x)\,x^{-\alpha_n}\sum_{k\in\ganz}c_k\,e^{ik\tilde c_n\log x}\,dx\\
& =\int_{0+}^\infty\cos(x)\,x^{-\alpha_n}\theta_n(\log x)\,dx
\end{align*}
and a similar calculation in case $\alpha_n\in(1,2)$, applying (12) on page 348 of \cite{Integral} instead, concludes the proof of \eqref{shiftrep}.
\end{proof}
\begin{remark}
Note that the representation \eqref{shiftrep} directly shows that for functions $f$ with $f,f',f''\in C_0(\mathbb{R})\cap L^1(\mathbb{R})$ we get convergence of the shifted Caputo forms
$$\frac{\partial^{\alpha_n}}{\partial_{c,\theta_n}x^{\alpha_n}}\,f(x)+d_nf'(x)\to\frac{\partial_{\mathcal Z}}{\partial_{c,\theta}x}\,f(x)$$
if dominated convergence can be applied here. This can even be shown without the smoothness assumption on the admissable functions $\theta_n$, we only need that the Fourier coefficients $(c_n)_{n\in\ganz}$ are absolutely summable, to ensure that $\theta_n(x)\to\theta(x)$, and that dominated convergence can be applied to the shifted Caputo forms. Using \eqref{FTsfd}, \eqref{FTsfZd} and Plancherel's Theorem as in Theorem 3.11 of \cite{KLM} for the special case of constant $\theta_n=\theta\equiv c_0$, this further shows $L^2$-convergence
$$\left\|\frac{\partial^{\alpha_n}f}{\partial_{c,\theta_n}x^{\alpha_n}}+d_nf'-\frac{\partial_{\mathcal Z}f}{\partial_{c,\theta}x}\right\|_2\to0$$
of the corresponding shifted semi-fractional derivatives for suitable functions $f$ with $\widehat f\in L^2(\rr)$ and $\widehat{Lf}\in L^2(\rr)$ for all generators $L$ corresponding to the semi-fractional derivatives $\partial^{\alpha_n}/\partial_{c,\theta_n}x^{\alpha_n}$ and $\partial_{\mathcal Z}/\partial_{c,\theta}x$, if again dominated convergence can be applied.
\end{remark}

\section{semi-fractional Gr\"unwald-Letnikov type formula}

The results of the last section provide an infinitesimal approach to semi-fractional derivatives. This can be applied to approximate semi-fractional derivatives numerically. It will also enable us to give a numerical algorithm for the solution of certain semi-fractional diffusion equations in Section 5. Similar to the assumption above, for given $c>1$ and $\alpha\in(0,2)\setminus\{1\}$ let $\theta$ be a fixed smooth admissible function with Fourier coefficients
$(c_k)_{k\in\mathbb{Z}}$ and recall from Remark \ref{FTreps}
\begin{equation*}
\omega_k=\begin{cases}c_k\Gamma(ik\tilde c-\alpha+1) & \text{ if }\alpha\in(0,1),\\
-c_k\Gamma(ik\tilde c-\alpha+1) & \text{ if }\alpha\in(1,2).
\end{cases}
\end{equation*} 
\begin{defi}
Let $\theta$ be a smooth admissable function with respect to $c>1$ and $\alpha\in(0,2)\setminus\{1\}$. For every $h>0$ and a bounded function $f$ we define the {\it Gr\"unwald-Letnikov semi-fractional difference} in $x\in\rr$ by
\begin{equation}\label{GLdiff}
{}_{h}\Delta^\alpha_{c,\theta}f(x):=\sum\limits_{k=-\infty}^{\infty}\omega_k\,h^{ik\tilde{c}-\alpha}\sum\limits_{j=0}^{\infty}\binom{\alpha-ik\tilde{c}}{j}(-1)^jf(x-jh),
\end{equation}
which is well-defined and real-valued due to the following result.
\end{defi}	
\begin{lemma}\label{abscon}
For every bounded function $f$ and $x\in\rr$ the double series in \eqref{GLdiff} is absolutely convergent and ${}_{h}\Delta^\alpha_{c,\theta}f(x)\in\rr$.
\end{lemma}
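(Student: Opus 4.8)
The plan is to establish absolute convergence of the double series in \eqref{GLdiff} by bounding each factor separately and then summing. The inner sum over $j$ is the tail of a binomial series, while the outer sum over $k$ is controlled by the decay of $\omega_k$. I would treat these two sums essentially independently and then combine the two bounds into a product of convergent series.

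First I would estimate the outer coefficients. Since $\theta$ is smooth admissable, the Fourier coefficients satisfy $|c_k|\leq C|k|^{-2}$, and the Stirling-type bound $|\Gamma(ik\tilde c-\alpha+1)|\leq C_2|k|^{-\alpha+1/2}e^{-|k|\tilde c\pi/2}$ (used already in the proof of Theorem \ref{psiserrep}) gives
\begin{equation*}
|\omega_k|\leq C\,C_2\,|k|^{-\alpha-3/2}e^{-|k|\tilde c\pi/2}\quad\text{for }k\neq 0,
\end{equation*}
so $\sum_k|\omega_k|$ converges very rapidly; the exponential factor is the crucial gain. Next I would control the inner sum. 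For fixed $k$, the Grünwald weights $(-1)^j\binom{\alpha-ik\tilde c}{j}$ are the Taylor coefficients of $(1-z)^{\alpha-ik\tilde c}$, and the standard asymptotic for generalized binomial coefficients gives
\begin{equation*}
\left|\binom{\alpha-ik\tilde c}{j}\right|=\left|\frac{\Gamma(j-\alpha+ik\tilde c)}{\Gamma(j+1)\,\Gamma(-\alpha+ik\tilde c)}\right|\sim \frac{j^{-\alpha-1}}{|\Gamma(-\alpha+ik\tilde c)|}\quad\text{as }j\to\infty,
\end{equation*}
so the inner sum over $j$ is dominated by $\|f\|_\infty\sum_j C(k)\,j^{-\alpha-1}$, which converges since $\alpha>0$. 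The delicate point is that this per-$k$ bound carries a $k$-dependent constant $C(k)=|\Gamma(-\alpha+ik\tilde c)|^{-1}$, which grows exponentially in $|k|$; this is exactly compensated by the exponential decay already extracted from $|\omega_k|$, so after multiplying by $|h^{ik\tilde c-\alpha}|=h^{-\alpha}$ (note $|h^{ik\tilde c}|=1$) and summing the product over $k$ and $j$ one obtains a finite bound.

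The main obstacle I anticipate is making the interchange of the two sums rigorous while keeping the $k$-dependence of the inner-sum constant under control: the naive per-$k$ estimate blows up exponentially, so one must pair the exponential decay of $\omega_k$ with the exponential growth of $|\Gamma(-\alpha+ik\tilde c)|^{-1}$ before summing, rather than bounding the inner sum first and the outer sum afterward in isolation. Concretely, I would fix a uniform-in-$k$ tail estimate $\sum_{j\geq 1}|\binom{\alpha-ik\tilde c}{j}|\leq B\,|\Gamma(-\alpha+ik\tilde c)|^{-1}$ and then verify that $\sum_k|\omega_k|\,|\Gamma(-\alpha+ik\tilde c)|^{-1}<\infty$, which holds because the two $\Gamma$-factors have reciprocal exponential behavior and the polynomial factors leave a summable power of $|k|$. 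Absolute convergence of the double series follows by Tonelli/Fubini. Finally, to see that ${}_{h}\Delta^\alpha_{c,\theta}f(x)\in\rr$, I would use the symmetry $\overline{c_{-k}}=c_k$ together with $\overline{\Gamma(ik\tilde c-\alpha+1)}=\Gamma(-ik\tilde c-\alpha+1)$ and the conjugation identity for the binomial coefficients; pairing the $k$ and $-k$ terms shows the $k$-th and $(-k)$-th summands are complex conjugates, so the total sum is real.
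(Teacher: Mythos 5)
Your overall strategy (estimate the Gr\"unwald weights in $j$, use decay of $\omega_k$ in $k$, then prove real-valuedness by pairing $k$ with $-k$) is the same as the paper's, and your real-valuedness argument is correct and essentially identical to the paper's. You have also correctly spotted a subtlety that the paper itself glosses over: the paper invokes $|\binom{z}{j}|\le C j^{-1-\Re(z)}$ with a constant claimed uniform in $z\in\complex$, which cannot be literally true, since the Flajolet--Sedgewick asymptotic constant is $1/|\Gamma(-z)|$ and this blows up along the vertical line $z=\alpha-ik\tilde c$ as $|k|\to\infty$. So the $k$-dependence does have to be tracked, exactly as you say.

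However, your repair fails at its final step, and the failure is quantitative, not cosmetic. By the functional equation $\Gamma(z+1)=z\Gamma(z)$,
\begin{equation*}
|\omega_k|\,|\Gamma(-\alpha+ik\tilde c)|^{-1}
=|c_k|\,\frac{|\Gamma(1-\alpha+ik\tilde c)|}{|\Gamma(-\alpha+ik\tilde c)|}
=|c_k|\,\sqrt{\alpha^2+k^2\tilde c^2}\;\asymp\;\tilde c\,|c_k|\,|k|,
\end{equation*}
so the two exponential factors cancel \emph{exactly} and no exponential gain survives. Since the hypothesis only gives $|c_k|\le C|k|^{-2}$ --- and this order is sharp for merely continuous, piecewise $C^1$ admissable functions (e.g.\ a triangle-wave perturbation of a large constant) --- your final series $\sum_k|\omega_k|\,|\Gamma(-\alpha+ik\tilde c)|^{-1}$ is of order $\sum_{k\neq0}|k|^{-1}$ and diverges; the claim that ``the polynomial factors leave a summable power of $|k|$'' is false. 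The root cause is that your tail estimate $\sum_{j\ge1}\bigl|\binom{\alpha-ik\tilde c}{j}\bigr|\le B\,|\Gamma(-\alpha+ik\tilde c)|^{-1}$, though true, is lossy by a factor $\asymp|k|^{2\alpha}$: the asymptotic profile $j^{-1-\alpha}/|\Gamma(-\alpha+ik\tilde c)|$ is only attained for $j\gtrsim(k\tilde c)^2$ (one needs $|\Gamma(j-\alpha+ik\tilde c)|\approx\Gamma(j-\alpha)$, which requires $j\gg k^2\tilde c^2$; for smaller $j$ the weights are suppressed by a factor of roughly $e^{-k^2\tilde c^2/(2j)}$), and summing $j^{-1-\alpha}$ over $j\gtrsim(k\tilde c)^2$ contributes an extra factor $(|k|\tilde c)^{-2\alpha}$. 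The correct size of the inner sum is $\asymp(|k|\tilde c)^{1/2-\alpha}e^{\pi|k|\tilde c/2}$, i.e.\ $\asymp|k|^{-2\alpha}\,|\Gamma(-\alpha+ik\tilde c)|^{-1}$; with this sharper, $j$-range--dependent estimate the product with $|\omega_k|$ becomes $\asymp|c_k|\,|k|^{1-2\alpha}\lesssim|k|^{-1-2\alpha}$, which is summable, and the lemma follows. As written, though, your chain of estimates terminates in a divergent series and does not establish absolute convergence.
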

\begin{proof}
First note that since $\theta$ is a smooth admissable function the Fourier coefficients $(c_k)_{k\in\ganz}$ are absolutely summable and fulfill $\overline{c_{-k}}=c_k$. Thus also  $(\omega_k)_{k\in\ganz}$ is absolutely summable due to $|\Gamma(ik\tilde c-\alpha+1)|\leq|\Gamma(1-\alpha)|$ for all $k\in\ganz$. Further, by Theorem VI.1 in \cite{Flojolet} we have $|\binom{z}{j}|\leq C\cdot j^{-1-\Re(z)}$ for all $z\in\complex$, $j\in\nat$ and some $C>0$ and hence with $M:=\|f\|_\infty$ we get
\begin{equation*}
\left|{}_{h}\Delta^\alpha_{c,\theta}f(x)\right|\leq\sum\limits_{k=-\infty}^{\infty}|\omega_k|\,h^{-\alpha}M\bigg(1+C\sum\limits_{j=1}^{\infty}j^{-(1+\alpha)}\bigg)<\infty.
\end{equation*}
Using 
$$\binom{\alpha-ik\tilde{c}}{j}(-1)^j=\binom{ik\tilde{c}-\alpha+j-1}{j}=\frac{\Gamma(ik\tilde c-\alpha+j)}{j!\,\Gamma(ik\tilde c-\alpha)}$$
we may rewrite
\begin{align*}
{}_{h}\Delta^\alpha_{c,\theta}f(x) & =\pm h^{-\alpha}\sum\limits_{k=-\infty}^{\infty}c_k\Gamma(ik\tilde c-\alpha+1)\sum\limits_{j=0}^{\infty}\frac{\Gamma(ik\tilde c-\alpha+j)}{j!\,\Gamma(ik\tilde c-\alpha)}\,f(x-jh)e^{ik\tilde c\log h}\\
& =\pm h^{-\alpha}\sum\limits_{k=-\infty}^{\infty}\bigg(c_k(ik\tilde c-\alpha)\sum\limits_{j=0}^{\infty}\frac{\Gamma(ik\tilde c-\alpha+j)}{j!}\,f(x-jh)\bigg)e^{ik\tilde c\log h}\\
& =:\pm h^{-\alpha}\sum\limits_{k=-\infty}^{\infty}a_k\,e^{ik\tilde c\log h}
\end{align*}
to see that ${}_{h}\Delta^\alpha_{c,\theta}f(x)\in\rr$ iff $\overline{a_{-k}}=a_k$ for all $k\in\ganz$. Using $\overline{\Gamma(z)}=\Gamma(\bar z)$ for all $z\in\complex\setminus\{0,-1,-1,\ldots\}$ we get
\begin{align*}
\overline{a_{-k}} & =\overline{c_{-k}}\,(\overline{-ik\tilde c-\alpha})\sum\limits_{j=0}^{\infty}\frac{\overline{\Gamma(-ik\tilde c-\alpha+j)}}{j!}\,f(x-jh)\\
& =c_k(ik\tilde c-\alpha)\sum\limits_{j=0}^{\infty}\frac{\Gamma(ik\tilde c-\alpha+j)}{j!}\,f(x-jh)=a_k
\end{align*}
concluding the proof.
\end{proof}
\begin{lemma}\label{konv}
Let $f\in L^1(\mathbb{R})$ be bounded and let $\theta$ be a smooth admissable function with respect to $c>1$ and $\alpha\in(0,2)\setminus\{1\}$. Then for every $x\in\mathbb{R}$ as $h\downarrow 0$ we have
\begin{align*}
\widehat{{}_{h}\Delta^{\alpha}_{c,\theta}f}(x)\to \sum\limits_{k=-\infty}^{\infty} \omega_k (-ix)^{\alpha-ik\tilde{c}}\widehat{f}(x).
\end{align*}
\end{lemma}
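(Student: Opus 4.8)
The plan is to compute $\widehat{{}_h\Delta^\alpha_{c,\theta}f}(x)$ in closed form for each fixed $h>0$ and then let $h\downarrow0$. First I would fix $x\in\rr$ and record the elementary shift rule $\widehat{f(\cdot-jh)}(x)=e^{ijhx}\widehat f(x)$, valid since $f\in L^1(\rr)$. For each fixed $k$ the inner series $\sum_{j\ge0}\binom{\alpha-ik\tilde c}{j}(-1)^j f(\cdot-jh)$ converges absolutely in $L^1(\rr)$, because $\|f(\cdot-jh)\|_1=\|f\|_1$ and $|\binom{\alpha-ik\tilde c}{j}|\le C\,j^{-1-\alpha}$ by the binomial bound used in Lemma \ref{abscon}; hence its Fourier transform is the termwise sum, and the binomial theorem gives
$$\sum_{j=0}^\infty\binom{\alpha-ik\tilde c}{j}(-e^{ihx})^j=(1-e^{ihx})^{\alpha-ik\tilde c},$$
the series being absolutely convergent on $|{-e^{ihx}}|=1$ since $\Re(\alpha-ik\tilde c)=\alpha>0$ (at the exceptional points $e^{ihx}=1$ both sides vanish).

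Next I would assemble the outer sum. Using the absolute-summability estimates from Lemma \ref{abscon} to justify interchanging the Fourier transform with the sum over $k$, I obtain
$$\widehat{{}_h\Delta^\alpha_{c,\theta}f}(x)=\widehat f(x)\sum_{k=-\infty}^\infty\omega_k\,h^{ik\tilde c-\alpha}(1-e^{ihx})^{\alpha-ik\tilde c}=\widehat f(x)\sum_{k=-\infty}^\infty\omega_k\left(\frac{1-e^{ihx}}{h}\right)^{\alpha-ik\tilde c},$$
where I have absorbed the positive scalar $1/h$ into the base of the principal power. The point of this rewriting is that $\frac{1-e^{ihx}}{h}\to-ix$ as $h\downarrow0$, since $1-e^{ihx}=-ihx+O(h^2)$; for $x\ne0$ the limit $-ix$ lies off the negative real axis, so by continuity of the principal power each summand converges to $\omega_k(-ix)^{\alpha-ik\tilde c}$.

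To move the limit inside the sum over $k$ I would apply dominated convergence, and this is the technical heart of the argument. The key observation is that $\Re(1-e^{ihx})=1-\cos(hx)\ge0$, so $\arg\!\big(\tfrac{1-e^{ihx}}{h}\big)\in[-\tfrac\pi2,\tfrac\pi2]$ for every $h$ and $x$; combined with $\big|\tfrac{1-e^{ihx}}{h}\big|=\tfrac{2|\sin(hx/2)|}{h}\le|x|$ this yields
$$\left|\omega_k\left(\frac{1-e^{ihx}}{h}\right)^{\alpha-ik\tilde c}\right|\le|\omega_k|\,|x|^{\alpha}\,e^{|k|\tilde c\pi/2}.$$
Now I invoke the Stirling estimate $|\Gamma(ik\tilde c-\alpha+1)|\le C_2|k|^{1/2-\alpha}e^{-|k|\tilde c\pi/2}$ and the smoothness bound $|c_k|\le C|k|^{-2}$ already used in the proof of Theorem \ref{psiserrep}, so that $|\omega_k|\,e^{|k|\tilde c\pi/2}\le CC_2|k|^{-3/2-\alpha}$. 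The resulting dominating sequence $CC_2|x|^\alpha|k|^{-3/2-\alpha}$ is summable and independent of $h$, so dominated convergence gives
$$\widehat{{}_h\Delta^\alpha_{c,\theta}f}(x)\to\widehat f(x)\sum_{k=-\infty}^\infty\omega_k(-ix)^{\alpha-ik\tilde c},$$
while the case $x=0$ is immediate since both sides vanish ($\Re(\alpha-ik\tilde c)=\alpha>0$).

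I expect the main obstacle to be exactly this uniform-in-$h$ domination: a naive bound on each summand blows up because of the factor $e^{|k|\tilde c\pi/2}$, and it is only the exponential decay of the Gamma factor in $\omega_k$ — together with the fact that $1-e^{ihx}$ is confined to the closed right half-plane, so that its argument never exceeds $\pi/2$ — that makes the two exponentials cancel and renders the sum convergent uniformly in $h$. By contrast, the interchange of the Fourier transform with the double series is comparatively routine given the estimates of Lemma \ref{abscon}, but it should be carried out in the order just indicated (summing over $j$ first for fixed $k$, then over $k$) so that the $k$-dependence of the binomial bound plays no role.
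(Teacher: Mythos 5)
Your proposal is correct and follows essentially the same route as the paper's proof: the Fourier shift rule and termwise summation reduce $\widehat{{}_{h}\Delta^{\alpha}_{c,\theta}f}(x)$ to $\widehat f(x)\sum_{k=-\infty}^{\infty}\omega_k\left(\frac{1-e^{ihx}}{h}\right)^{\alpha-ik\tilde c}$ via the binomial series, after which dominated convergence in $k$ and continuity of the principal power give the limit. The only difference is that you make the dominating sequence explicit (using $\Re(1-e^{ihx})\ge 0$ to confine the argument to $[-\pi/2,\pi/2]$ and the Stirling estimate for the Gamma factor to cancel $e^{|k|\tilde c\pi/2}$), a detail the paper subsumes under the phrase ``by dominated convergence''; this is a sharpening of the same argument, not a different approach.
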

\begin{proof}
 For fixed $h>0$ and every $x\in\mathbb{R}$ we obtain using dominated convergence
 \begin{align*}
\widehat{{}_{h}\Delta^{\alpha}_{c,\theta}f}(x) &=\int\limits_{\mathbb{R}}e^{ixy} {}_{h}\Delta^{\alpha}_{c,\theta}f(y)\,dy\\
& =\int\limits_{\mathbb{R}}e^{ixy}\sum\limits_{k=-\infty}^{\infty}\sum\limits_{j=0}^{\infty}\omega_k h^{ik\tilde{c}-\alpha}\binom{\alpha-ik\tilde{c}}{j}(-1)^jf(y-jh)\,dy\\
&=\sum\limits_{k=-\infty}^{\infty}\omega_kh^{ik\tilde{c}-\alpha}\sum\limits_{j=0}^{\infty}\binom{\alpha-ik\tilde{c}}{j}(-1)^j\int\limits_{\mathbb{R}}e^{ixy}f(y-jh)\,dy\\
&=\sum\limits_{k=-\infty}^{\infty}\omega_kh^{ik\tilde{c}-\alpha}\sum\limits_{j=0}^{\infty}\binom{\alpha-ik\tilde{c}}{j}(-1)^je^{ixjh}\widehat{f}(x)\\
	&=\sum\limits_{k=-\infty}^{\infty}\omega_kh^{ik\tilde{c}-\alpha}\widehat{f}(x)\sum\limits_{j=0}^{\infty} \binom{\alpha-ik\tilde{c}}{j}(-e^{ixh})^j.
	\end{align*}
Since $\Re(\alpha-ik\tilde{c})>0$ it follows that
\begin{align*}
\sum\limits_{j=0}^{\infty} \binom{\alpha-ik\tilde{c}}{j}(-e^{ixh})^j=(1-e^{ixh})^{\alpha-ik\tilde{c}},
\end{align*}
e.g., see \cite[p.397-398]{binom}. Thus by dominated convergence we get
\begin{align*}
\lim_{h\downarrow0}\widehat{{}_{h}\Delta^{\alpha}_{c,\theta}f}(x)& =\lim_{h\downarrow0}\sum\limits_{k=-\infty}^{\infty}\omega_k\left(\frac{1-e^{ixh}}{h}\right)^{\alpha-ik\tilde{c}}\widehat{f}(x)\\
&=\sum\limits_{k=-\infty}^{\infty}\omega_k\lim\limits_{h\downarrow 0}\left(\frac{1-e^{ixh}}{h}\right)^{\alpha-ik\tilde{c}}\widehat{f}(x)=\sum\limits_{k=-\infty}^{\infty} \omega_k (-ix)^{\alpha-ik\tilde{c}}\widehat{f}(x)
\end{align*}
concluding the proof.
\end{proof}
Combining \eqref{FTsfdalt} and Lemma \ref{konv}, Fourier inversion directly yields a {\it Gr\"unwald-Letnikov type formula} for the semi-fractional derivative.
\begin{theorem}\label{GLsfd}
Let $\theta$ be a smooth admissable function with respect to $c>1$ and $\alpha\in(0,2)\setminus\{1\}$. Further, let $f\in L^1(\mathbb{R})$ be a bounded function such that all derivatives of $f$ up to an integer order $n>\alpha+1$ exist and $f^{(n)}\in L^1(\mathbb{R})$. Then for almost every $x\in\mathbb{R}$ we have
\begin{equation*}
\frac{\partial^{\alpha}}{\partial_{c,\theta}x^{\alpha}}\,f(x)=\lim\limits_{h\downarrow 0}{}_{h}\Delta^{\alpha}_{c,\theta}f(x)=\lim\limits_{h\downarrow 0}\sum\limits_{k=-\infty}^{\infty}\omega_k\,h^{ik\tilde{c}-\alpha}\sum\limits_{j=0}^{\infty}\binom{\alpha-ik\tilde{c}}{j}(-1)^jf(x-jh).
\end{equation*}
\end{theorem}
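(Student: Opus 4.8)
The plan is to prove the pointwise convergence by combining the Fourier-side result of Lemma \ref{konv} with a quantitative control of the inverse Fourier transforms. First I would observe that Lemma \ref{konv} already gives, for every fixed $x\in\rr$, the convergence $\widehat{{}_h\Delta^\alpha_{c,\theta}f}(x)\to\sum_k\omega_k(-ix)^{\alpha-ik\tilde c}\,\widehat f(x)$ as $h\downarrow0$, and by \eqref{FTsfdalt} the limit is exactly $\widehat{\partial^\alpha f/\partial_{c,\theta}x^\alpha}(x)$. Since Fourier inversion recovers a function from its transform, the heart of the matter is to upgrade this \emph{pointwise convergence of Fourier transforms} to \emph{pointwise (a.e.) convergence of the functions themselves}. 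The natural mechanism is dominated convergence applied under the inversion integral $g(x)=\frac{1}{2\pi}\int_\rr e^{-ixy}\widehat g(y)\,dy$, so the real work is to produce an $h$-independent integrable dominating function for $y\mapsto\widehat{{}_h\Delta^\alpha_{c,\theta}f}(y)$.

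The key steps, in order, are as follows. First I would rewrite $\widehat{{}_h\Delta^\alpha_{c,\theta}f}(y)=\sum_k\omega_k\big((1-e^{iyh})/h\big)^{\alpha-ik\tilde c}\,\widehat f(y)$, exactly as in the proof of Lemma \ref{konv}, using $\Re(\alpha-ik\tilde c)=\alpha>0$ and the binomial identity there. Second, I would bound the scalar factor: since $|1-e^{iyh}|\le\min(2,|y|h)$ one has $|(1-e^{iyh})/h|\le|y|$ uniformly in $h$, and taking the modulus of the complex power (recalling $|z^{w}|=|z|^{\Re w}e^{-\arg(z)\Im w}$ with a bounded argument) gives a bound of the form $|(1-e^{iyh})/h|^{\alpha-ik\tilde c}|\le C_k|y|^\alpha$ with $\sum_k|\omega_k|C_k<\infty$, the summability of $\omega_k$ being guaranteed as in Lemma \ref{abscon}. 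This yields a uniform-in-$h$ estimate $|\widehat{{}_h\Delta^\alpha_{c,\theta}f}(y)|\le C\,|y|^\alpha\,|\widehat f(y)|$. Third, I would invoke the smoothness hypothesis: because $f,f^{(n)}\in L^1(\rr)$ with $n>\alpha+1$, repeated integration by parts gives $|\widehat f(y)|\le \|f^{(n)}\|_1\,|y|^{-n}$, and of course $|\widehat f(y)|\le\|f\|_1$; combining the two regimes shows $|y|^\alpha|\widehat f(y)|$ is integrable on $\rr$ since $\alpha-n<-1$. This $y\mapsto C|y|^\alpha|\widehat f(y)|\in L^1(\rr)$ is the desired dominating function.

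With domination in hand, I would apply the dominated convergence theorem to the Fourier inversion integral: the integrands $e^{-ixy}\,\widehat{{}_h\Delta^\alpha_{c,\theta}f}(y)$ converge pointwise in $y$ (by Lemma \ref{konv}, applied at the point $y$ rather than $x$) to $e^{-ixy}\,\widehat{\partial^\alpha f/\partial_{c,\theta}x^\alpha}(y)$ and are dominated uniformly in $h$, so the integrals converge, i.e.\ ${}_h\Delta^\alpha_{c,\theta}f(x)\to\partial^\alpha f/\partial_{c,\theta}x^\alpha(x)$ for almost every $x$ (every $x$ at which inversion is valid). The main obstacle I anticipate is the second step: obtaining the uniform-in-$h$ scalar bound $|(1-e^{iyh})/h|^{\alpha-ik\tilde c}|\le C_k|y|^\alpha$ requires care with the complex exponent, since the imaginary part $-ik\tilde c$ interacts with $\arg\big((1-e^{iyh})/h\big)$; one must check that this argument stays in a fixed bounded interval as $h\downarrow0$ and over $y\in\rr$, so that the factor $e^{k\tilde c\,\arg(\cdots)}$ is controlled by a constant $C_k$ summable against $|\omega_k|$. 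Once this argument bound is secured, the remaining estimates are routine and the dominated convergence argument closes the proof.
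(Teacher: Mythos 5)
Your proposal is correct and is essentially the paper's own argument: the paper proves Theorem \ref{GLsfd} in one sentence by combining \eqref{FTsfdalt} with Lemma \ref{konv} and invoking Fourier inversion, and your dominated-convergence argument under the inversion integral, with dominating function $C|y|^\alpha|\widehat f(y)|\in L^1(\rr)$ obtained from the hypothesis $n>\alpha+1$, is exactly the detail that sentence leaves implicit. One small correction: the summability $\sum_k|\omega_k|C_k<\infty$ with $C_k=e^{|k|\tilde c\pi/2}$ (coming from $\arg\bigl((1-e^{iyh})/h\bigr)\in[-\pi/2,\pi/2]$) does not follow from Lemma \ref{abscon} alone, but from the Stirling-type bound $|\Gamma(ik\tilde{c}-\alpha+1)|\leq C|k|^{-\alpha+1/2}e^{-|k|\tilde{c}\pi/2}$ already used in the proof of Theorem \ref{psiserrep}, whose exponential decay cancels your factor $C_k$ and leaves a summable tail $O(|k|^{-\alpha-3/2})$.
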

\begin{remark}\label{nsf}
Analogously, by \eqref{FTnsfdalt} and the same steps of proof as in Lemma \ref{konv}, with the same conditions on $f$ as in Theorem \ref{GLsfd} we get a Gr\"unwald-Letnikov type formula for the negative semi-fractional derivative 
\begin{equation*}
\frac{\partial^{\alpha}}{\partial_{c,\theta}(-x)^{\alpha}}\,f(x)=\lim\limits_{h\downarrow 0}\sum\limits_{k=-\infty}^{\infty}\omega_k\,h^{ik\tilde{c}-\alpha}\sum\limits_{j=0}^{\infty}\binom{\alpha-ik\tilde{c}}{j}(-1)^jf(x+jh).
\end{equation*}
for almost every $x\in\mathbb{R}$.
\end{remark}
\begin{bsp}\label{ex1}
 Let $f(x)=\exp(-x^2)$ then $f,f',f''\in C_0(\mathbb{R})\cap L^1(\mathbb{R})$ so that the generator and Caputo forms of semi-fractional derivatives are equivalent. For fixed $\alpha\in(0,2)\setminus\{1\}$, we define the $2\pi$-periodic function 
  \begin{align*}
  \theta(x)=\begin{cases} \displaystyle\frac{\alpha\sin(x)}{6\,\Gamma(1-\alpha)} +\frac{1}{\Gamma(1-\alpha)} &\textrm{ if } \alpha\in(0,1),\\[2ex]
              \displaystyle\frac{\alpha\sin(x)}{6\,\Gamma(1-\alpha)} -\frac{1}{\Gamma(1-\alpha)} &\textrm{ if } \alpha\in(1,2).
             \end{cases}
 \end{align*}
 Thus, eliminating the first term we will receive the ordinary fractional derivative of order $\alpha$.
 Then $\theta$ is a smooth admissable function and according to Theorem \ref{GLsfd} the Gr\"unwald-Letnikov formula approximates the semi-fractional derivative of $f$ of order $\alpha$ with respect to $c:=e^{2\pi\alpha}$ and $\theta$.
 \begin{figure}
\subfigure[{(Semi-)fractional derivative of order $\alpha=1.5$ from Example \ref{ex1} in the interval $[-5,5]$.}]{\includegraphics[width=0.49\textwidth]{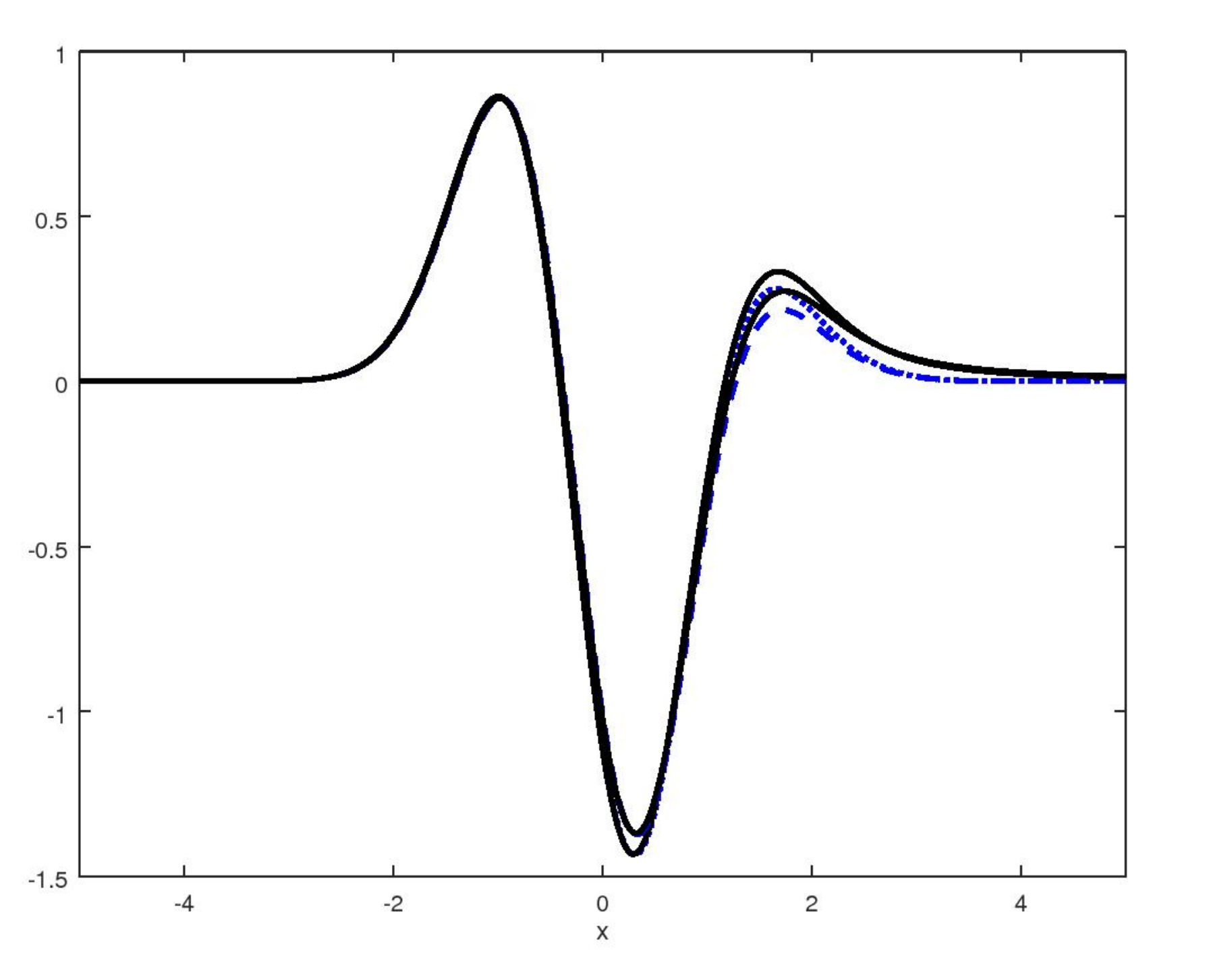}}
  \subfigure[{Zoom of (a) to the interval $[0,0.5]$.}]{\includegraphics[width=0.50\textwidth]{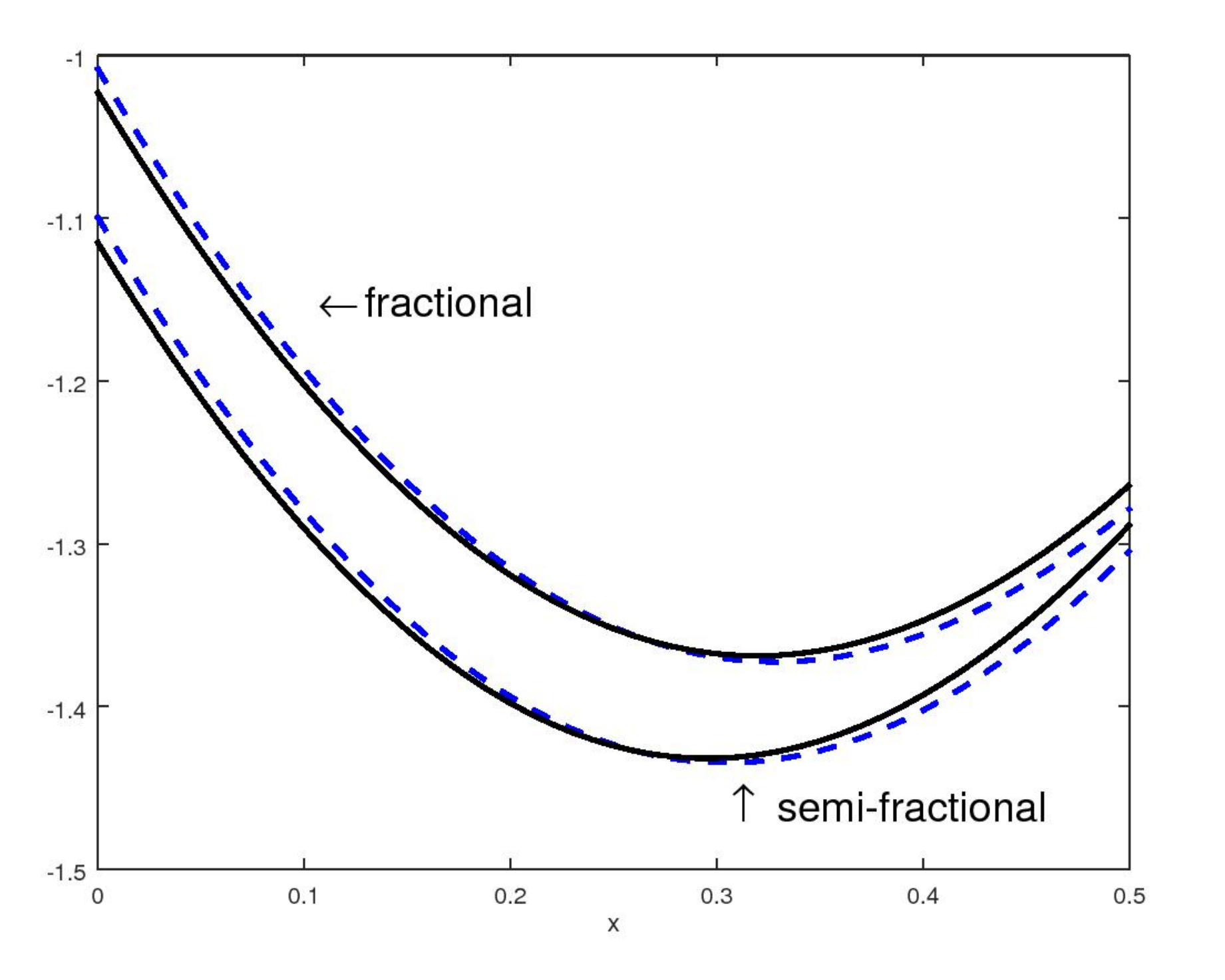}}
\caption{Comparison of numerically evaluated Caputo forms (solid lines) and Gr\"unwald-Letnikov approximations (dashed lines) for fractional and semi-fractional derivatives.}\label{Fig1}
\end{figure}
For $\alpha=1.5$, the numerically evaluated Caputo form \eqref{Capsfd} of the fractional (cancel the sine part in the definition of $\theta$) and the semi-fractional derivative of Definition \ref{generator1} on the intervals $[-5,5]$ and $[0,0.5]$ are shown in Figure \ref{Fig1} together with the corresponding Gr\"unwald-Letnikov approximation of the semi-fractional derivative. For the numerical approximation of the Caputo forms we used the function \textit{quadcc} in GNU Octave \cite{Octave}, which uses adaptive Clenshaw-Curtis rules to calculate the integral. For all computations, we used a step size of $h=0.01$ and for each point of interest, we truncated the inner sum to $j\leq200$ in the Gr\"unwald-Letnikov approximation \eqref{GLdiff}. 
 \begin{figure}
  \includegraphics[width=0.75\textwidth]{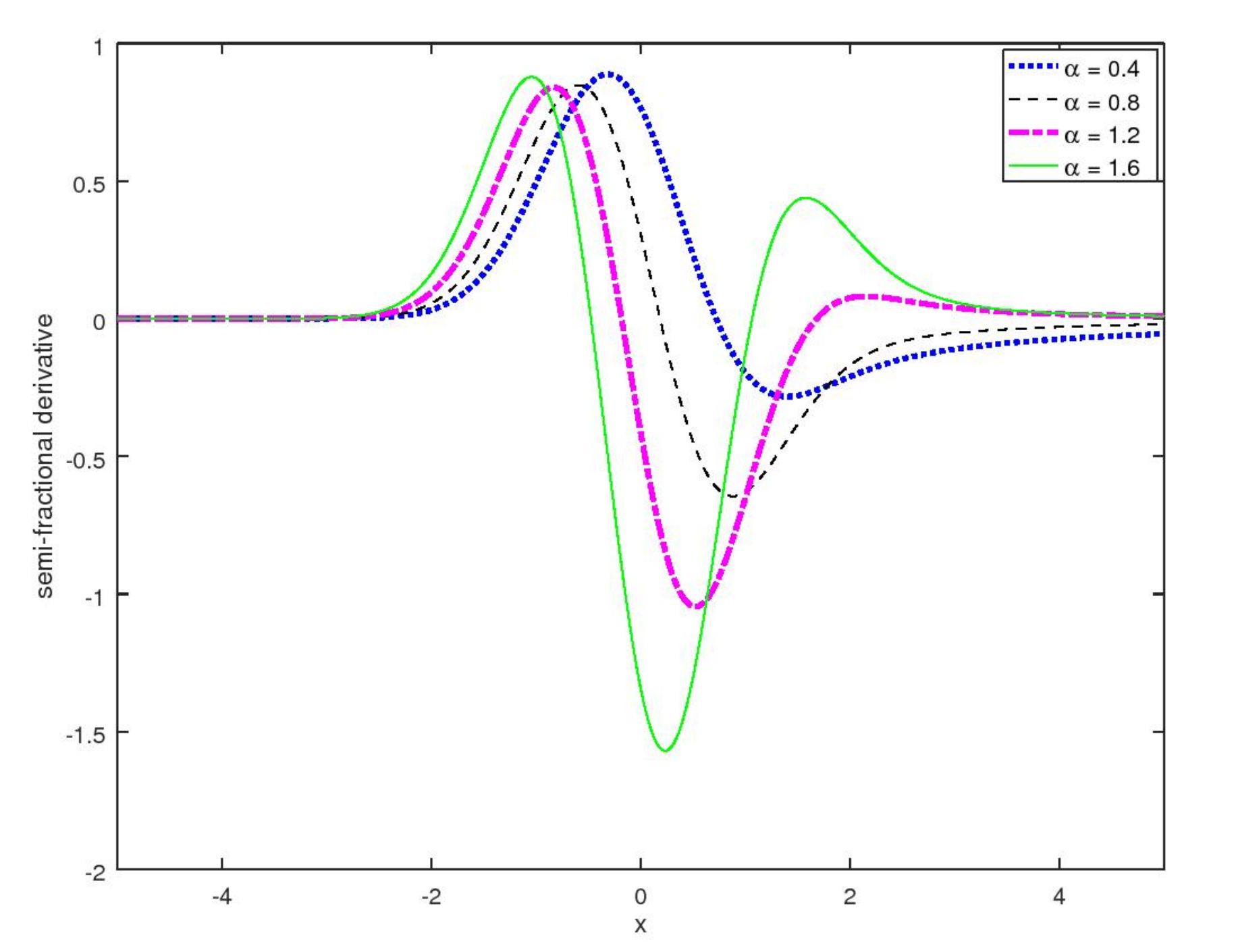}
\caption{Semi-fractional derivative from Example \ref{ex1} for different orders $\alpha$ on the interval $[-5,5]$.}
\label{Fig2}
\end{figure} 
To study the influence of the parameter $\alpha$ on this derivative, we varied $\alpha$ between $0.4$ and $1.6$ in Figure \ref{Fig2}. The derivatives shown are the Caputo forms of the semi-fractional derivatives numerically evaluated by the same {\it quadcc} method as above.
\end{bsp}

Finally, we want to answer the question of Grünwald-Letnikov type formulas for the Zolotarev semi-fractional derivative of order $\alpha=1$. Let again $c>1$ be fixed and let $\theta$ be a smooth admissable function with Fourier
coefficients $(c_k)_{k\in\mathbb{Z}}$.
\begin{defi}
 For every fixed $h>0$ and a bounded, differentiable function $f$ we define 
\begin{equation}\label{ZGLdiff}
{}_{h}^1\Delta_{c,\theta}f(x):=\sum\limits_{k=-\infty}^{\infty}\left(\omega_{k,1}\,h^{ik\tilde{c}-1}\sum\limits_{j=0}^{\infty}\binom{1-ik\tilde{c}}{j}(-1)^jf(x-jh)+\omega_{k,2}f'(x)\right),
\end{equation}
for every $x\in\mathbb{R}$, where $(\omega_{k,1})_{k\in\mathbb{Z}}$ and $(\omega_{k,2})_{k\in\mathbb{Z}}$ are given by (\ref{omega1Z}) and (\ref{omega2Z}). 
\end{defi}

With the same arguments as in Lemma \ref{abscon} and \ref{GLsfd} we get the following result.

\begin{lemma}\label{GLZsfd}
 Let $f\in L^1(\mathbb{R})$ be bounded and differentiable. Then for every $x\in\mathbb{R}$, the series in (\ref{ZGLdiff}) is absolutely convergent, real-valued and as $h\downarrow0$ we have
 \begin{equation}
  \widehat{{}_{h}^1\Delta_{c,\theta}f}(x)\to \left(\sum\limits_{k=-\infty}^{\infty} \omega_{k,1}(-ix)^{1-ik\tilde{c}}+\omega_{k,2}(-ix)\right)\widehat{f}(x).
 \end{equation}
\end{lemma}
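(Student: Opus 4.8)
The plan is to mirror the two-part structure of Lemmas \ref{abscon} and \ref{konv}, treating the $\omega_{k,1}$ double series and the $\omega_{k,2}f'(x)$ term separately. First I would prove absolute convergence. The point where the argument genuinely differs from Lemma \ref{abscon} is that for $\alpha=1$ the uniform bound $|\Gamma(ik\tilde c-\alpha+1)|\le|\Gamma(1-\alpha)|$ is unavailable, since $\Gamma(1-\alpha)=\Gamma(0)$ is a pole. Instead I would use the Stirling-type estimate $|\Gamma(ik\tilde c)|\le C|k|^{-1/2}e^{-|k|\tilde c\pi/2}$ (the $\alpha=1$ instance of the bound from page 20 of \cite{Stirling}) together with $|c_k|\le C|k|^{-2}$ from smoothness to conclude that $(\omega_{k,1})_{k\in\ganz}$ is absolutely summable; for $(\omega_{k,2})_{k\in\ganz}$ the factor $\cos(\frac{\pi}{2}ik\tilde c)=\cosh(\frac{\pi}{2}k\tilde c)$ grows exponentially, but its growth is exactly cancelled by the exponential decay of $\Gamma(ik\tilde c)$, again leaving an absolutely summable sequence. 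For the inner $j$-sum I would invoke $|\binom{1-ik\tilde c}{j}|\le Cj^{-1-\Re(1-ik\tilde c)}=Cj^{-2}$ from Theorem VI.1 in \cite{Flojolet}, so that for bounded $f$ it is dominated by $\|f\|_\infty\sum_j Cj^{-2}<\infty$, giving absolute convergence of the double series.

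For real-valuedness I would repeat the conjugation argument from Lemma \ref{abscon}. Rewriting $\binom{1-ik\tilde c}{j}(-1)^j=\Gamma(ik\tilde c+j-1)/(j!\,\Gamma(ik\tilde c-1))$ reduces the first part to $-h^{-1}\sum_k b_k\,e^{ik\tilde c\log h}$ for a suitable coefficient $b_k$, and using $\overline{c_{-k}}=c_k$, $\overline{\Gamma(z)}=\Gamma(\bar z)$, and the fact that $f$ is real one checks $\overline{b_{-k}}=b_k$, so this part is real. The same conjugation identities give $\overline{\omega_{-k,2}}=\omega_{k,2}$, whence $\sum_k\omega_{k,2}\in\rr$, and multiplying by the real number $f'(x)$ keeps the second part real; together with $\omega_{0,1}=\omega_{0,2}=0$ this gives ${}_{h}^1\Delta_{c,\theta}f(x)\in\rr$.

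For the Fourier-transform limit I would follow the computation in Lemma \ref{konv} on the $\omega_{k,1}$ part: interchanging integral and double sum by dominated convergence, the shift $f(\cdot-jh)$ produces a factor $e^{ixjh}\widehat f(x)$, and since $\Re(1-ik\tilde c)=1>0$ the binomial series sums to $(1-e^{ixh})^{1-ik\tilde c}$ (see \cite[p.397-398]{binom}); rewriting this as $\big((1-e^{ixh})/h\big)^{1-ik\tilde c}$ and using $(1-e^{ixh})/h\to-ix$ yields the summand $\omega_{k,1}(-ix)^{1-ik\tilde c}\widehat f(x)$ in the limit. The $\omega_{k,2}f'(x)$ term is independent of $h$, and $\widehat{f'}(x)=-ix\widehat f(x)$ turns it into the fixed summand $\omega_{k,2}(-ix)\widehat f(x)$; summing the two parts gives the asserted limit. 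I expect the main obstacle to be justifying that $\lim_{h\downarrow0}$ may be pulled inside the outer $k$-sum. As in the proof of Theorem \ref{psiserrep}, this rests on a uniform-in-$h$ domination of $\big((1-e^{ixh})/h\big)^{1-ik\tilde c}$: for $x$ fixed one has $\arg\big((1-e^{ixh})/h\big)\in(-\pi/2,\pi/2)$, so the exponential factor $e^{k\tilde c\arg(\cdots)}$ is bounded by $e^{|k|\tilde c\pi/2}$ and is absorbed by the decay $|\Gamma(ik\tilde c)|\le C|k|^{-1/2}e^{-|k|\tilde c\pi/2}$, producing a summable dominating sequence independent of $h$.
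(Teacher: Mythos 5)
Your proposal is correct and takes essentially the same approach as the paper: the paper gives no separate argument for this lemma, saying only that it follows ``with the same arguments as in Lemma \ref{abscon} and \ref{GLsfd}'', which is precisely the two-part structure (absolute convergence/real-valuedness, then the Fourier-limit computation of Lemma \ref{konv}) that you execute. The $\alpha=1$-specific details you supply --- replacing the unavailable bound $|\Gamma(ik\tilde{c}-\alpha+1)|\leq|\Gamma(1-\alpha)|$ by the Stirling-type estimate $|\Gamma(ik\tilde{c})|\leq C|k|^{-1/2}e^{-|k|\tilde{c}\pi/2}$, observing that the $\cosh(\frac{\pi}{2}k\tilde{c})$ growth in $\omega_{k,2}$ is exactly offset by this decay, and making the dominating sequence for the interchange of $\lim_{h\downarrow0}$ with the $k$-sum explicit --- are exactly the points the paper leaves implicit, and they check out.
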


\begin{defi}\label{DefDelta}
For every fixed $h>0$ and a bounded, differentiable function $f$ with $f'(y)=O(|y|^{-\beta})$ for some $\beta>0$ as $y\to-\infty$, we define 
\begin{equation*}
{}_{h}^2\Delta_{c,\theta}f(x):=-c_0\gamma f'(x)+c_0\sum\limits_{j=1}^{\infty} \frac{1}{j}\left(f'(x)1_{[0,h^{-1}]}(j)-f'(x-jh)\right)
\end{equation*}
for every $x\in\mathbb{R}$, where $\gamma\approx 0.5772$ is the Euler-Mascheroni constant. 
\end{defi}

\begin{remark}
 Due to the assumptions on $f'$ in Definition \ref{DefDelta}, ${}_{h}^2\Delta_{c,\theta}f(x)$ exists for every $x\in\mathbb{R}$. Further, for $h\in(0,1)$, we are able to give an alternative representation of the limit 
 as $h\downarrow 0$, namely
 \begin{align}\label{altlim}
  \lim\limits_{h\downarrow 0}{}_{h}^2\Delta_{c,\theta}f(x)&=\lim\limits_{h\downarrow 0}c_0h^{-1}\left(\frac{\partial^{h+1}}{\partial x^{h+1}}f(x)-f'(x)\right)
 \end{align}
for every $x\in\mathbb{R}$, where $\frac{\partial^{h+1}}{\partial x^{h+1}}f$ is the Caputo form of the usual fractional derivative of order $h+1\in(1,2)$. To see this, first note that
\begin{equation}\label{altlim1}\begin{split}
{}_{h}^2\Delta_{c,\theta}f(x)&=-c_0\gamma f'(x)+c_0\sum\limits_{j=1}^{\infty} \frac{1}{j}\left(f'(x)1_{[0,1]}(jh)-f'(x-jh)\right)\\
&\to -c_0\gamma f'(x)+c_0\int_{0}^{\infty} \frac{1}{y}\left(f'(x)1_{[0,1]}(y)-f'(x-y)\right)dy
\end{split}\end{equation}
due to the convergence of the Riemannian sums for $h\downarrow 0$. On the other hand, the right-hand side of (\ref{altlim}) equals
\begin{align*}
 &h^{-1}\left(\frac{\partial^{h+1}}{\partial x^{h+1}}f(x)-f'(x)\right)\\
 &\quad=\left(\frac{1}{\Gamma(1-h)}\int_0^{\infty}\left(f'(x)-f'(x-y)\right)y^{-h-1}dy-\int_1^{\infty} f'(x)y^{-h-1}dy\right)\\
 &\quad=\left(h^{-1}\left(\frac{1}{\Gamma(1-h)}-1\right)f'(x)\right.\\
 &\quad\qquad\left.+\frac{1}{\Gamma(1-h)}\int_0^{\infty}\left(f'(x)1_{[0,1]}(y)-f'(x-y)\right)y^{-h-1}dy\right).
\end{align*}
Since as $h\downarrow0$
\begin{align*}
 h^{-1}\left(\frac{1}{\Gamma(1-h)}-1\right)\to \left.\frac{d}{dx}\,\frac{1}{\Gamma(1-x)}\right|_{x=0}=\frac{\Gamma'(1)}{\Gamma(1)^2}=-\gamma
\end{align*}
with the Euler-Mascheroni constant $\gamma$, it follows that
\begin{align*}
 h^{-1}\left(\frac{\partial^{h+1}}{\partial x^{h+1}}f(x)-f'(x)\right)\to -\gamma\, f'(x)+\int_0^{\infty}\left(f'(x)1_{[0,1]}(y)-f'(x-y)\right)y^{-1}dy
\end{align*}
as $h\downarrow 0$ and this together with \eqref{altlim1} proves \eqref{altlim}.
\end{remark}

\begin{lemma}\label{GLZsfd1}
 Let $f\in L^1(\mathbb{R})$ be a bounded, differentiable function with $f'(y)=O(|y|^{-\beta})$ for some $\beta>0$ as $y\to-\infty$. Then for every $x\in\mathbb{R}$ as $h\downarrow 0$ we have
 \begin{equation}
  \widehat{{}_{h}^2\Delta_{c,\theta}f}(x)\to c_0(-ix)\log(-ix)\widehat{f}(x).
 \end{equation}
\end{lemma}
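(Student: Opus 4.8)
The plan is to compute $\widehat{{}_{h}^2\Delta_{c,\theta}f}(x)$ explicitly as a scalar multiple of $\widehat f(x)$ and then to let $h\downarrow0$ in the resulting symbol; since ${}_{h}^2\Delta_{c,\theta}$ only involves the coefficient $c_0$ (no $k$-sum), the target is precisely the logarithmic piece. Fix $x\neq0$, the case $x=0$ being trivial with both sides vanishing under the convention $0\cdot\log0:=0$. With the convention $\widehat f(x)=\int_{\rr}e^{ixy}f(y)\,dy$, integration by parts gives $\widehat{f'}(x)=-ix\,\widehat f(x)$ and the shift rule gives $\widehat{f'(\cdot-jh)}(x)=e^{ixjh}(-ix)\widehat f(x)$. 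Applying the Fourier transform term by term to the defining expression in Definition \ref{DefDelta}, exactly as in the proof of Lemma \ref{konv}, I would obtain
$$\widehat{{}_{h}^2\Delta_{c,\theta}f}(x)=c_0\,(-ix)\,\widehat f(x)\left[-\gamma+\sum_{j=1}^{\infty}\frac1j\bigl(1_{[0,h^{-1}]}(j)-e^{ixjh}\bigr)\right].$$

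Next I would evaluate the bracketed symbol in the limit $h\downarrow0$. Setting $N:=\lfloor 1/h\rfloor$, we have $1_{[0,h^{-1}]}(j)=1$ iff $j\le N$, so $\sum_{j=1}^\infty\frac1j\,1_{[0,h^{-1}]}(j)=H_N:=\sum_{j=1}^N\frac1j$, while Abel's theorem yields $\sum_{j=1}^\infty\frac{e^{ixjh}}{j}=-\log(1-e^{ixh})$ for the principal branch and $xh\notin2\pi\ganz$. Hence the bracket equals $-\gamma+H_N+\log(1-e^{ixh})$. Inserting the harmonic asymptotics $H_N=-\log h+\gamma+o(1)$ (valid since $N=\lfloor1/h\rfloor$) together with $1-e^{ixh}=-ixh\,(1+O(h))$, so that $\log(1-e^{ixh})=\log(-ix)+\log h+o(1)$ (the factor $h>0$ leaving the argument unchanged), the terms $\gamma$ and $\log h$ cancel and the bracket tends to $\log(-ix)$. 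This gives $\widehat{{}_{h}^2\Delta_{c,\theta}f}(x)\to c_0(-ix)\log(-ix)\widehat f(x)$, as claimed.

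The main obstacle is the term-by-term Fourier transform in the first step. Unlike Lemma \ref{konv}, where the binomial weights $\binom{\alpha-ik\tilde c}{j}$ decay like $j^{-1-\alpha}$ and are absolutely summable so that Fubini applies, here the weights are $1/j$, which are \emph{not} summable, and indeed $\sum_j\frac1j\|f'(\cdot-jh)\|_1=\infty$, so absolute convergence is unavailable. I would justify the interchange through the partial sums $S_M(y)=\sum_{j=1}^M\frac1j\bigl(f'(y)1_{[0,h^{-1}]}(j)-f'(y-jh)\bigr)$, whose Fourier transforms are $\widehat{S_M}(x)=(-ix)\widehat f(x)\sum_{j=1}^M\frac1j(1_{[0,h^{-1}]}(j)-e^{ixjh})$ and converge to the bracket above; it then remains to pass $\int e^{ixy}S_M(y)\,dy\to\int e^{ixy}\bigl(\lim_MS_M\bigr)(y)\,dy$. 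This is exactly where the hypothesis $f'(y)=O(|y|^{-\beta})$ enters: for $j>N$ it renders the tail $\sum_{j>N}\frac{f'(y-jh)}{j}$ absolutely convergent with controllable dependence on $y$, enabling a dominated-convergence (Abel-summation) argument to complete the passage to the limit. This conditional-convergence step is the heart of the proof, everything else being the bookkeeping of the harmonic and logarithmic asymptotics. Alternatively one could route through \eqref{altlim} and the known symbol $(-ix)^{h+1}$ of the Caputo derivative of order $h+1$, for which $c_0h^{-1}\bigl((-ix)^{h+1}-(-ix)\bigr)\to c_0(-ix)\log(-ix)$, but exchanging that limit with the Fourier transform is no less delicate.
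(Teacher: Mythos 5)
Your proof is correct, but it takes a genuinely different route from the paper's. The paper disposes of this lemma in two lines: it invokes the alternative representation \eqref{altlim} from the preceding remark (the limit of ${}_{h}^2\Delta_{c,\theta}f$ coincides with the limit of $c_0h^{-1}\bigl(\partial^{h+1}f/\partial x^{h+1}-f'\bigr)$) and then uses the known Fourier symbol $(-ix)^{h+1}$ of the Caputo derivative of order $h+1$, reducing everything to the elementary limit $c_0(-ix)\frac{(-ix)^h-1}{h}\,\widehat f(x)\to c_0(-ix)\log(-ix)\widehat f(x)$ --- this is exactly the alternative you mention at the end, and you are right that it is ``no less delicate,'' since the paper silently exchanges the Fourier transform with the $h$-limit on both sides of \eqref{altlim}. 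Your argument instead computes $\widehat{{}_{h}^2\Delta_{c,\theta}f}(x)$ as the explicit symbol $c_0(-ix)\widehat f(x)\bigl[-\gamma+H_N+\log(1-e^{ixh})\bigr]$ with $N=\lfloor 1/h\rfloor$, using Abel's theorem for $\sum_{j\geq1} e^{ixjh}/j$, and then gets the limit from the harmonic asymptotics $H_N=-\log h+\gamma+o(1)$ together with $\log(1-e^{ixh})=\log(-ix)+\log h+o(1)$, the $\gamma$'s and the $\log h$'s cancelling; all of these steps check out. What your route buys: it is self-contained (no appeal to \eqref{altlim} or to the fractional-derivative symbol), and it makes transparent why the Euler--Mascheroni constant appears in Definition \ref{DefDelta} in the first place --- it is precisely the counterterm for the $\gamma$ produced by $H_N$. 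What the paper's route buys is brevity and reuse of work already done in the remark. On rigor the two are comparable: your term-by-term Fourier transform of a conditionally convergent series is the exact analogue of the paper's unjustified interchange of $\lim_{h\downarrow0}$ with the Fourier transform, and you are in fact more forthcoming than the paper in flagging this as the delicate point and sketching a partial-sum argument that exploits the tail hypothesis $f'(y)=O(|y|^{-\beta})$.
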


\begin{proof}
 Using equation (\ref{altlim}), we get
 \begin{align*}
  \lim\limits_{h\downarrow 0} \widehat{{}_{h}^2\Delta_{c,\theta}f}(x)&=\lim\limits_{h\downarrow 0}c_0h^{-1}\left( (-ix)^{h+1}-(-ix)\right)\widehat{f}(x)\\
  &=\lim\limits_{h\downarrow 0}c_0(-ix) \frac{(-ix)^h-1}{h}\widehat{f}(x)\to -c_0\,ix\log(-ix)\widehat{f}(x)
 \end{align*}
for every $x\in\mathbb{R}$.
\end{proof}

Combining Lemma \ref{GLZsfd} and \ref{GLZsfd1} with equation (\ref{FTsfZdalt}), Fourier inversion directly yields a {\it Gr\"unwald-Letnikov type formula} for the Zolotarev semi-fractional derivative.
\begin{theorem}\label{ZGL}
Let $f\in L^1(\mathbb{R})$ be a bounded, differentiable function with $f'(y)=O(|y|^{-\beta})$ for some $\beta>0$ as $y\to-\infty$ such that $f^{(2)},f^{(3)}$ exist and $f^{(3)}\in L^1(\mathbb{R})$. Then for almost every $x\in\mathbb{R}$ we have
\begin{align*}
\frac{\partial_{\mathcal{Z}}}{\partial_{c,\theta}x}\,f(x) & =\lim\limits_{h\downarrow 0}{}_{h}^1\Delta_{c,\theta}f(x)+{}_{h}^2\Delta_{c,\theta}f(x)\\
&=\lim_{h\downarrow0}\sum\limits_{k=-\infty}^{\infty}\left(\omega_{k,1}\,h^{ik\tilde{c}-1}\sum\limits_{j=0}^{\infty}\binom{1-ik\tilde{c}}{j}(-1)^jf(x-jh)+\omega_{k,2}f'(x)\right)\\
& \quad\qquad-c_0\gamma f'(x)+c_0\sum\limits_{j=1}^{\infty} \frac{1}{j}\left(f'(x)1_{[0,h^{-1}]}(j)-f'(x-jh)\right),
\end{align*}
where $(\omega_{k,1})_{k\in\mathbb{Z}}$ and $(\omega_{k,2})_{k\in\mathbb{Z}}$ are given by (\ref{omega1Z}) and (\ref{omega2Z}).
\end{theorem}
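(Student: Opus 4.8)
The plan is to reuse exactly the two-step template that turns Lemma~\ref{konv} into Theorem~\ref{GLsfd}: first identify the pointwise limit of the Fourier transform of the combined difference operator, then invert. By Lemma~\ref{GLZsfd} and Lemma~\ref{GLZsfd1}, for each fixed $x\in\rr$ the Fourier transform of ${}_{h}^1\Delta_{c,\theta}f+{}_{h}^2\Delta_{c,\theta}f$ converges, as $h\downarrow0$, to
$$\left(\sum_{k\in\ganz\setminus\{0\}}\omega_{k,1}(-ix)^{1-ik\tilde c}+\omega_{k,2}(-ix)+c_0(-ix)\log(-ix)\right)\widehat f(x),$$
which by \eqref{FTsfZdalt} is precisely $\widehat{\partial_{\mathcal Z}f/\partial_{c,\theta}x}(x)=\psi(x)\widehat f(x)$. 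So the only remaining task is to transfer this pointwise convergence of Fourier transforms into almost-everywhere convergence of the operators themselves.

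First I would record the decay the hypotheses buy us. Since $f^{(3)}\in L^1(\rr)$, the quantity $\widehat{f^{(3)}}(x)=(-ix)^3\widehat f(x)$ is bounded, so $|\widehat f(x)|\le C(1+|x|)^{-3}$. By Theorem~\ref{psiserrepZ} the log-characteristic function obeys $|\psi(x)|\le C(1+|x|)\log(2+|x|)$, whence the limiting transform $\psi\widehat f$ lies in $L^1(\rr)$. Consequently $\partial_{\mathcal Z}f/\partial_{c,\theta}x$ is continuous and is recovered from $\psi\widehat f$ by Fourier inversion, and it suffices to prove that the inverse transforms of $\widehat{{}_{h}^1\Delta f}+\widehat{{}_{h}^2\Delta f}$ converge to it as $h\downarrow0$.

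The heart of the matter is a dominated-convergence estimate, uniform in $h\in(0,1)$, for the integrand in the inversion formula. For the ${}_{h}^1\Delta_{c,\theta}$ part this is routine: exactly as in Lemma~\ref{konv} its Fourier transform equals $\big(\sum_k\omega_{k,1}\big(\frac{1-e^{ixh}}{h}\big)^{1-ik\tilde c}+\omega_{k,2}(-ix)\big)\widehat f(x)$, and the bounds $|\tfrac{1-e^{ixh}}{h}|\le|x|$ together with the absolute summability of $(\omega_{k,1})_k$ and $(\omega_{k,2})_k$ recorded in Lemma~\ref{abscon} yield the uniform majorant $C(1+|x|)|\widehat f(x)|\le C(1+|x|)^{-2}$, which is integrable. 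The genuinely delicate contribution is the logarithmic $k=0$ term carried by ${}_{h}^2\Delta_{c,\theta}$. Summing the harmonic-type series in Definition~\ref{DefDelta} gives $\widehat{{}_{h}^2\Delta f}(x)=c_0(-ix)\widehat f(x)\big(\sum_{j=1}^{\lfloor1/h\rfloor}\tfrac1j+\log(1-e^{ixh})-\gamma\big)$, whose bracket behaves like $\log\frac{1-e^{ixh}}{h}$; using $\Re\log\frac{1-e^{ixh}}{h}\le\log|x|+C$ one obtains the uniform majorant $C(1+|x|)\log(2+|x|)|\widehat f(x)|\le C(1+|x|)^{-2}\log(2+|x|)\in L^1(\rr)$.

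The hard part will be precisely this uniform domination of the ${}_{h}^2\Delta_{c,\theta}$ term, which is subtler than anything in Theorem~\ref{GLsfd} for two reasons: the slow logarithmic growth of $\psi$ forces one to spend the full $|x|^{-3}$ decay of $\widehat f$ (which is exactly why the hypothesis $f^{(3)}\in L^1$ is needed), and the transform develops genuine logarithmic singularities at the points $x\in 2\pi h^{-1}\ganz$, where $e^{ixh}=1$ and the defining series diverges. These singularities are only locally integrable, which is why the conclusion must be stated for almost every $x$ rather than everywhere; I would control them by verifying that their contribution to the inversion integral is $O(h\log(1/h))$ uniformly, so that dominated convergence still applies off a null set. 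Once the majorant is secured, passing the limit $h\downarrow0$ under the inversion integral and invoking the pointwise identification of the first paragraph completes the proof.
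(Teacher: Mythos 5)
Your proposal follows exactly the paper's own route: the paper's entire proof consists of combining Lemma \ref{GLZsfd} and Lemma \ref{GLZsfd1} with equation \eqref{FTsfZdalt} and invoking Fourier inversion, which is precisely your first paragraph, and your subsequent estimates (the decay $|\widehat f(x)|\leq C(1+|x|)^{-3}$ from $f^{(3)}\in L^1$, the uniform majorant for the ${}_{h}^1\Delta_{c,\theta}$ part, and the $O(h\log(1/h))$ control of the moving logarithmic singularities at $x\in 2\pi h^{-1}\ganz$) are a correct filling-in of the inversion step that the paper leaves implicit. The only slip is attributive rather than mathematical: the ``almost every $x$'' qualifier comes from the a.e.\ identification inherent in Fourier inversion (the generator form $L_{\mathcal Z}^+f$ agrees with the continuous inverse transform of $\psi\,\widehat f$ only a.e.), not from those Fourier-domain singularities, which, as your own estimate shows, contribute nothing in the limit and create no exceptional set in the physical variable.
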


Note that for constant $\theta\equiv 2/\pi=c_0$ the Gr\"unwald-Letnikov type formula for the ordinary Zolotarev fractional derivative of order $\alpha=1$ from \cite{KLM} is given by
$$\frac{\partial_{\mathcal{Z}}}{\partial x}\,f(x)=-\frac2{\pi}\,\gamma f'(x)+\frac2{\pi}\lim_{h\downarrow0}\sum\limits_{j=1}^{\infty} \frac{1}{j}\left(f'(x)1_{[0,h^{-1}]}(j)-f'(x-jh)\right).$$

\begin{remark}
 Analogously to Remark \ref{nsf} and under the same conditions as in Theorem \ref{ZGL}, except that now $f'(y)=O(|y|^{-\beta})$ for some $\beta>0$ as $y\to+\infty$, we get a Gr\"unwald-Letnikov type formula for the negative Zolotarev semi-fractional derivative of order $\alpha=1$
\begin{align*}
\frac{\partial_{\mathcal{Z}}}{\partial_{c,\theta}(-x)}\,f(x) & =\lim_{h\downarrow0}\sum\limits_{k=-\infty}^{\infty}\left(\omega_{k,1}\,h^{ik\tilde{c}-1}\sum\limits_{j=0}^{\infty}\binom{1-ik\tilde{c}}{j}(-1)^jf(x+jh)-\omega_{k,2}f'(x)\right)\\
& \quad\qquad+c_0\gamma f'(x)+c_0\sum\limits_{j=1}^{\infty} \frac{1}{j}\left(f'(x+jh)-f'(x)1_{[0,h^{-1}]}(j)\right),
\end{align*}
\end{remark}

\begin{figure}[H]
\includegraphics[width=0.65\textwidth]{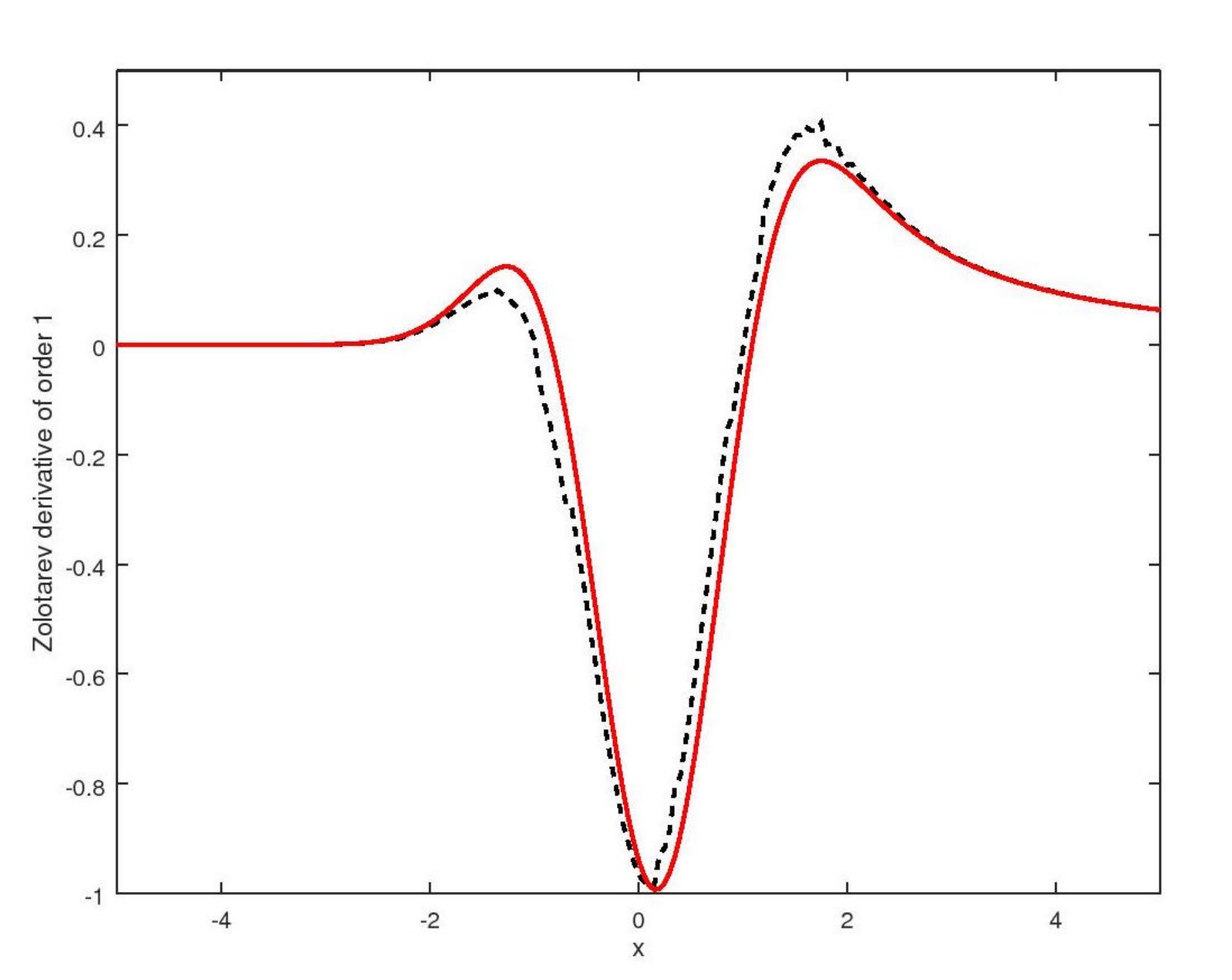}
\caption{Zolotarev semi-fractional derivative (dashed line) of order $\alpha=1$ in Definition \ref{generator5} on the interval $[-5,5]$ and its Gr\"unwald-Letnikov approximation \eqref{ZGLdiff} (solid line) from Example \ref{ex2}.}
\label{Fig3}
\end{figure} 

\begin{bsp}\label{ex2}
 We consider again $f(x)=\exp(-x^2)$ and $\theta(x)=\frac{2\sin(x)}{3\pi}+\frac{2}{\pi}$ such that eliminating the first term, we get back the ordinary Zolotarev fractional derivative of order $\alpha=1$. Then we are able to approximate the Zolotarev semi-fractional derivative of order $\alpha=1$ with our Gr\"unwald-Letnikov approach in Theorem \ref{ZGL}. 
 The result is shown in Figure \ref{Fig3}, where the approximation with step size $h=0.05$ as well as the numerically evaluated Caputo form as described in Example \ref{ex1} are plotted in the interval $[-5,5]$. Note that the dashed line shows some numerical anomaly which may be caused by the cosine integral part $\frac{\cos y}{y}$ together with the strong fluctuations of $\theta(\log y)$ near $y=0$ in the Caputo form of the Zolotarev semi-fractional derivative.
\end{bsp}

\section{Semi-fractional diffusion equations}

We are now able to deal with semi-fractional diffusion equations. In particular, for given $c>1$, $\alpha\in(0,2)$ and corresponding admissable functions $\theta_1,\theta_2$ we aim to give a solution of the equation
\begin{equation}\label{semifracDE}
\frac{\partial}{\partial t}\,p(x,t)=-v\,\frac{\partial}{\partial x}\,p(x,t)+D_1\,\frac{\partial^{\alpha}}{\partial_{c,\theta_1}x^{\alpha}}\,p(x,t)+D_2\,\frac{\partial^{\alpha}}{\partial_{c,\theta_2}(-x)^{\alpha}}\,p(x,t)
\end{equation}
for constants $v,D_1,D_2\in\rr$ with $D_1,D_2\leq0$ if $\alpha\in(0,1)$ or $D_1,D_2\geq0$ if $\alpha\in[1,2)$ and $D_1+D_2\not=0$ in both cases. In case $\alpha=1$ the semi-fractional derivatives are given by their Zolotarev form from Section 2.2. Note that in the symmetric case $D_1=D_2$ and $\theta:=\theta_1=\theta_2$ we may summarize 
$$\frac{\partial^{\alpha}p}{\partial_{c,\theta}|x|^{\alpha}}:=\frac{\partial^{\alpha}p}{\partial_{c,\theta}x^{\alpha}}+\frac{\partial^{\alpha}p}{\partial_{c,\theta}(-x)^{\alpha}}$$
to a {\it semi-fractional Laplacian} which can also be considered as a {\it semi-fractional Riesz derivative}; see \cite{KST,SKM} for their fractional counterparts. Let $\nu$ be the semistable distribution with L\'evy measure $\phi$ given by 
\begin{equation}\label{dif1}
\phi(r,\infty)=r^{-\alpha}|D_1|\theta_1(\log r)\quad\text{ and }\quad
 \phi(-\infty,-r)=r^{-\alpha}|D_2|\theta_2(\log r)
\end{equation}
and define
\begin{equation}\label{a}
a:=v+\begin{cases}\displaystyle\int_{\rr\setminus\{0\}}\frac{y}{1+y^2}\,d\phi(y) & \text{ if }\alpha\in(0,1),\\
\displaystyle\int_{\rr\setminus\{0\}}\left(\frac{y}{1+y^2}-\sin y\right)\,d\phi(y) & \text{ if }\alpha=1,\\
\displaystyle\int_{\rr\setminus\{0\}}\left(\frac{y}{1+y^2}-y\right)\,d\phi(y) & \text{ if }\alpha\in(1,2).\end{cases}
\end{equation}
as the drift coefficient in \eqref{logchar}. Note that the infinitely divisible distribution $\nu$ generates a continuous convolution semigroup $(\nu^{\ast t})_{t\geq0}$ representing the family of one-dimensional marginal distributions of a semistable L\'evy process $(X_t)_{t\geq0}$ with generator $L$ from \eqref{generator}. Hence the semi-fractional diffusion equation \eqref{semifracDE} is the corresponding abstract Cauchy problem for this semistable generator and the problem \eqref{semifracDE} is well-posed.
\begin{theorem}
Let $(X_t)_{t\geq0}$ be the semistable L\'evy process given uniquely in law by the semistable distribution $\nu$ of $X_1$ with L\'evy measure \eqref{dif1} and drift \eqref{a}. Then $X_t$ has a continuous Lebesgue density $x\mapsto p(x,t)$  for every $t>0$ and these densities are a solution to the semi-fractional diffusion equation \eqref{semifracDE} if $D_1,D_2\leq0$ in case $\alpha\in(0,1)$ or $D_1,D_2\geq0$ in case $\alpha\in[1,2)$.
\end{theorem}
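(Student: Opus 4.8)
I would argue entirely on the Fourier side. Since $\nu$ is infinitely divisible with log-characteristic function $\psi$ given by \eqref{logchar} for the L\'evy measure $\phi$ of \eqref{dif1} and the drift $a$ of \eqref{a}, the marginal $X_t\sim\nu^{\ast t}$ has $\widehat{p(\cdot,t)}(u)=e^{t\psi(u)}$, and the whole proof reduces to identities between $\psi$ and the operator symbols \eqref{FTsfd}--\eqref{FTnsfZd}.

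For the existence of a continuous density I would control $\Re\psi$. The real part $\Re\psi(u)=\int_{\rr\setminus\{0\}}(\cos(uy)-1)\,d\phi(y)\le0$ is independent of the drift, and from \eqref{dif1} together with the $\log(c^{1/\alpha})$-periodicity of $\theta_1,\theta_2$ one reads off the semi-scaling $\phi(c^{1/\alpha}B)=c^{-1}\phi(B)$, which gives $-\Re\psi(c^{1/\alpha}u)=c\,(-\Re\psi(u))$. As $\theta_1+\theta_2>0$, the measure $\phi$ charges every punctured neighbourhood of the origin, so $-\Re\psi(u)>0$ for $u\ne0$; hence $u\mapsto-\Re\psi(u)/|u|^\alpha$ is continuous, positive and periodic in $\log|u|$, and therefore bounded below by some $C>0$. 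Consequently $|e^{t\psi(u)}|\le e^{-tC|u|^\alpha}$, so $u\mapsto u^m e^{t\psi(u)}\in L^1(\rr)$ for every $m\in\nat$ and $t>0$, and Fourier inversion yields a density $p(\cdot,t)\in C^\infty(\rr)$ whose derivatives lie in $C_0(\rr)$.

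Next I would show that $\psi(u)$ is exactly the Fourier symbol of the right-hand side of \eqref{semifracDE}. By \eqref{dif1} the restriction of $\phi$ to $(0,\infty)$ equals $|D_1|$ times the positive measure $\phi_1$ from \eqref{PSI1} built from $\theta_1$, while its restriction to $(-\infty,0)$ is the reflection of $|D_2|$ times the corresponding positive measure built from $\theta_2$. Feeding this into \eqref{FTsfd} and \eqref{FTnsfd} (and into \eqref{FTsfZd}, \eqref{FTnsfZd} when $\alpha=1$) and using the sign hypotheses $D_1,D_2\le0$ for $\alpha\in(0,1)$ and $D_1,D_2\ge0$ for $\alpha\in[1,2)$ to absorb $D_i$ into $\phi$, the symbol of $D_1\frac{\partial^\alpha}{\partial_{c,\theta_1}x^\alpha}+D_2\frac{\partial^\alpha}{\partial_{c,\theta_2}(-x)^\alpha}$ becomes $\int_{\rr\setminus\{0\}}(e^{iuy}-1-g_\alpha(u,y))\,d\phi(y)$, where $g_\alpha(u,y)$ equals $0$, $iu\sin y$ or $iuy$ according as $\alpha\in(0,1)$, $\alpha=1$ or $\alpha\in(1,2)$. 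Comparing with \eqref{logchar} and inserting the precise drift \eqref{a}, the leftover linear terms collapse to $ivu$, which is the symbol of $-v\,\partial_x$; thus the total symbol of the right-hand side of \eqref{semifracDE} is $\psi(u)$.

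Finally I would match Fourier transforms on both sides of \eqref{semifracDE}. Differentiating $\widehat{p(\cdot,t)}(u)=e^{t\psi(u)}$ in $t$ under the integral sign, justified by $|\psi(u)e^{t\psi(u)}|\le C(1+|u|^\alpha)e^{-tC|u|^\alpha}\in L^1(\rr)$, shows $\widehat{\partial_tp(\cdot,t)}(u)=\psi(u)e^{t\psi(u)}$; and since all polynomial multiples of $e^{t\psi}$ are integrable, the equivalent Fourier-domain definitions \eqref{FTsfd}--\eqref{FTnsfZd} apply to $p(\cdot,t)$ and, by the previous paragraph, give the right-hand side of \eqref{semifracDE} the same Fourier transform $\psi(u)e^{t\psi(u)}$. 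Both sides being continuous $L^1$-functions, uniqueness of the Fourier transform and Fourier inversion force them to agree for every $x\in\rr$, which is \eqref{semifracDE}. I expect the bookkeeping in the symbol identification to be the main obstacle: one must check in each of the three regimes that the truncation corrections hidden in \eqref{a} cancel the $\tfrac{y}{1+y^2}$-term of \eqref{logchar} down to the single drift $ivu$, all while tracking the sign flips imposed by $D_1,D_2$ and by the reflection $y\mapsto-y$ in the negative operators.
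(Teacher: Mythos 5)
Your proposal is correct, and its core — identifying $\psi$ with the Fourier symbol of the right-hand side of \eqref{semifracDE}, noting $\widehat{p(\cdot,t)}(u)=e^{t\psi(u)}$ so that $\partial_t\widehat p=\psi\,\widehat p$, and concluding by Fourier inversion — is exactly the paper's argument, including the sign bookkeeping that converts $D_i$ into $|D_i|$ via the hypotheses $D_1,D_2\le 0$ (resp.\ $\ge 0$) and the cancellation of the $\tfrac{y}{1+y^2}$-truncation of \eqref{logchar} against the drift \eqref{a}, leaving only $ivu$. Where you genuinely depart from the paper is the density step: the paper simply cites the literature (Meerschaert--Scheffler, Sato) for the fact that a semistable law has a $C^\infty$ density, whereas you prove it from scratch, reading off the discrete scaling $\phi(c^{1/\alpha}B)=c^{-1}\phi(B)$ from \eqref{dif1}, deducing $-\Re\psi(c^{1/\alpha}u)=c\,(-\Re\psi(u))$, and using positivity (zeros of $1-\cos(uy)$ are bounded away from $0$ while $\phi$ has infinite mass near $0$) plus periodicity of $-\Re\psi(u)/|u|^\alpha$ in $\log|u|$ to get the lower bound $-\Re\psi(u)\ge C|u|^\alpha$, hence $|e^{t\psi(u)}|\le e^{-tC|u|^\alpha}$ and inversion applies. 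This self-contained argument is a worthwhile addition: it also supplies, explicitly, the integrability that your final step needs (differentiation under the integral sign, applicability of the Fourier-domain definitions \eqref{FTsfd}--\eqref{FTnsfZd} to $p(\cdot,t)$, and uniqueness of Fourier transforms of continuous $L^1$ functions), points the paper passes over by asserting $p(\cdot,t)\in C_0(\rr)\cap L^1(\rr)$ and invoking inversion directly.
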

\begin{proof}
First note that for every semistable L\'evy process $(X_t)_{t\geq 0}$ a continuous Lebesgue density of $X_t$ exists for $t>0$ and is in fact a function belonging to $C^{\infty}(\rr)$; see \cite{MMMHPS,Sato} for details. 
Denoting $\partial_{\mathcal Z}=\partial^1$ in case $\alpha=1$ to simplifiy notation, for the Fourier transform of the density we obtain with the log-characteristic function $\psi$ as in \eqref{logchar} and the generator $L$ of the L\'evy process
\begin{align*}
\frac{\partial}{\partial t}\,\widehat p(k,t) & =\frac{\partial}{\partial t}\,\Exp[\exp(ikX_t)]=\frac{\partial}{\partial t}\,\exp(t\psi(k))=\psi(k)\widehat p(k,t)=\widehat{Lp}(k,t)\\
&=\begin{cases}\displaystyle ivk\,\widehat p(k,t)-|D_1|\,\widehat{\frac{\partial^{\alpha}p}{\partial_{c,\theta_1}x^{\alpha}}}\,(k,t)-|D_2|\,\widehat{\frac{\partial^{\alpha}p}{\partial_{c,\theta_2}(-x)^{\alpha}}}\,(k,t) & \text{ if }\alpha\in(0,1)\\[3ex]
\displaystyle ivk\,\widehat p(k,t)+|D_1|\,\widehat{\frac{\partial^{\alpha}p}{\partial_{c,\theta_1}x^{\alpha}}}\,(k,t)+|D_2|\,\widehat{\frac{\partial^{\alpha}p}{\partial_{c,\theta_2}(-x)^{\alpha}}}\,(k,t) & \text{ if }\alpha\in[1,2)
\end{cases}\\[2ex]
&=ivk\,\widehat p(k,t)+D_1\,\widehat{\frac{\partial^{\alpha}p}{\partial_{c,\theta_1}x^{\alpha}}}\,(k,t)+D_2\,\widehat{\frac{\partial^{\alpha}p}{\partial_{c,\theta_2}(-x)^{\alpha}}}\,(k,t)
\end{align*}
where the last equalities follow according to Definitions \ref{generator1}--\ref{generator4} in case $\alpha\not=1$ and Definitions \ref{generator5}--\ref{generator6} in case $\alpha=1$ together with the sign restrictions of the constants $D_1,D_2$. Since the densities $x\mapsto p(x,t)$ belong to $C_0(\rr)\cap L^1(\rr)$ for all $t>0$, applying Fourier inversion directly leads to \eqref{semifracDE}.
\end{proof}
We now turn to numerical solutions of the semi-fractional diffusion equation \eqref{semifracDE} on a rectangle $x\in[-b,b]$, $t\in[T_1,T_2]$ for some $b>0$, $T_1,T_2>0$, assuming for simplicity $v=0$ for the drift part. Given $c>1$ and $\alpha\in(0,1)$ we choose a smooth admissable function $\theta=\theta_1=\theta_2$ and $D_1,D_2\leq 0$ with $D_1+D_2\neq 0$. To calculate the solution $p$ numerically, we approximate the left-hand side of \eqref{semifracDE} by a classical finite difference of order one and the (negative) semi-fractional derivative of order $\alpha$ on the right-hand side of \eqref{semifracDE} by our Gr\"unwald-Letnikov formula. Thereby, we choose fixed step sizes $\Delta t:=0.01$ in time and $h:=0.01$ in space. In order to get a good approximation for the semi-fractional derivatives, we approximate the solution on a larger interval in space, such that we consider $50$ neighboring points to the left and to the right of every point of interest $x\in[-b,b]$ for the calculation. To start our numerical algorithm for the calculation of a semistable density $p$, the initial condition $p(x,0)=\delta_x$ given by the Dirac delta function is not appropriate. Therefore, we choose 
$p(x,T_1)\approx p_1(x,T_1)$ for $T_1=0.01$ as the solution of the corresponding fractional diffusion equation; i.e.
\begin{equation*}
 \frac{\partial}{\partial t}p_1(x,t)=D_1\frac{\partial^{\alpha}}{\partial x^{\alpha}}p_1(x,t)+D_2\frac{\partial^{\alpha}}{\partial(-x)^{\alpha}}p_1(x,t).
\end{equation*}
with initial condition $p_1(x,0)=\delta_x$. This is a stable density for which numerical algorithms exist.
 Since $p_1(x,T_1)$ is a function with a single sharp peak around zero, the influence of the log-periodic perturbations of the semi-fractional derivative (compared to the fractional derivative) should be very low. We calculated all starting values of $p_1$ with the function \textit{'dstable'} in \textit{R} (version 3.2.3). 
  \begin{figure}[H]
	\centering
	\includegraphics[width=0.49\textwidth]{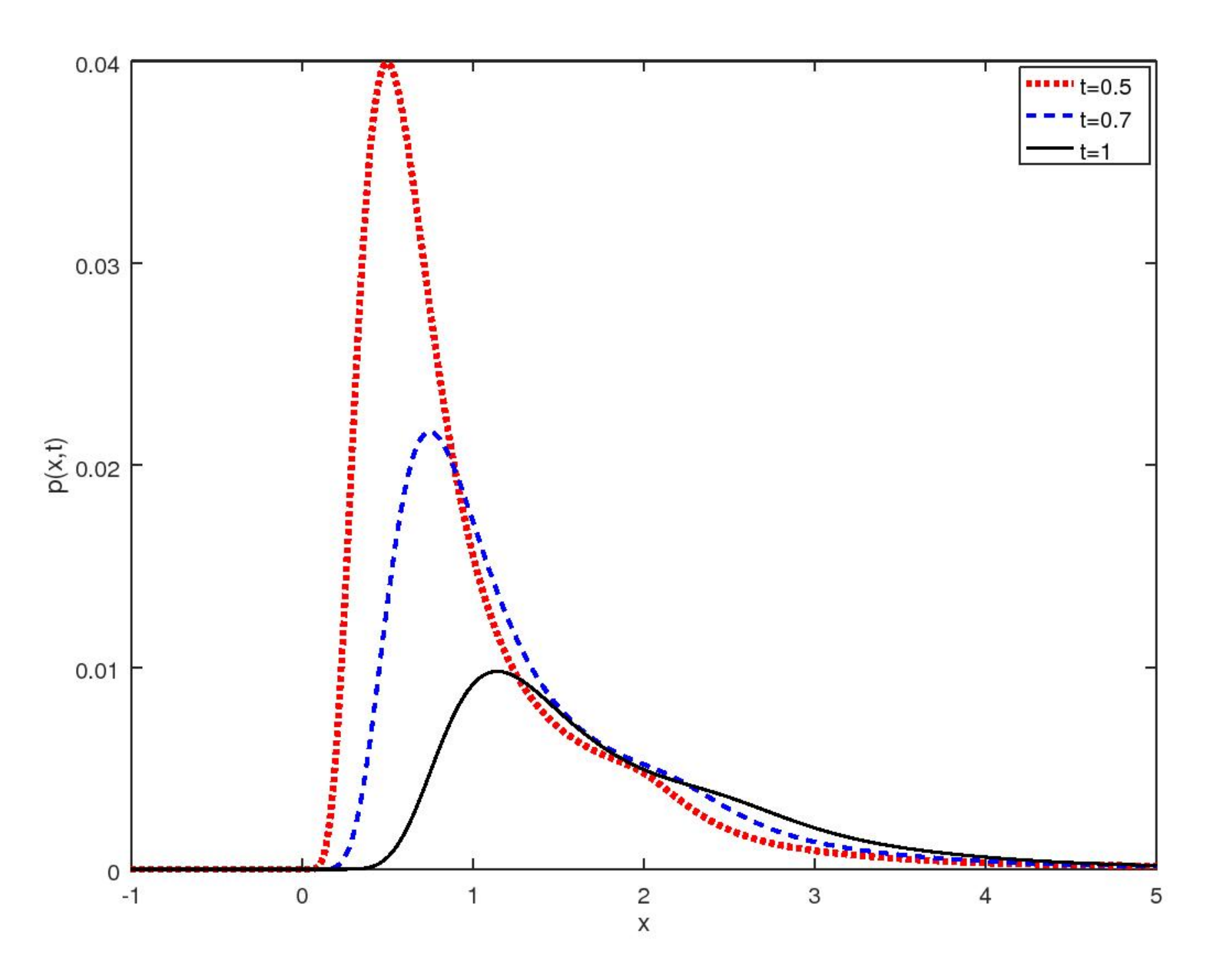}
	\includegraphics[width=0.47\textwidth]{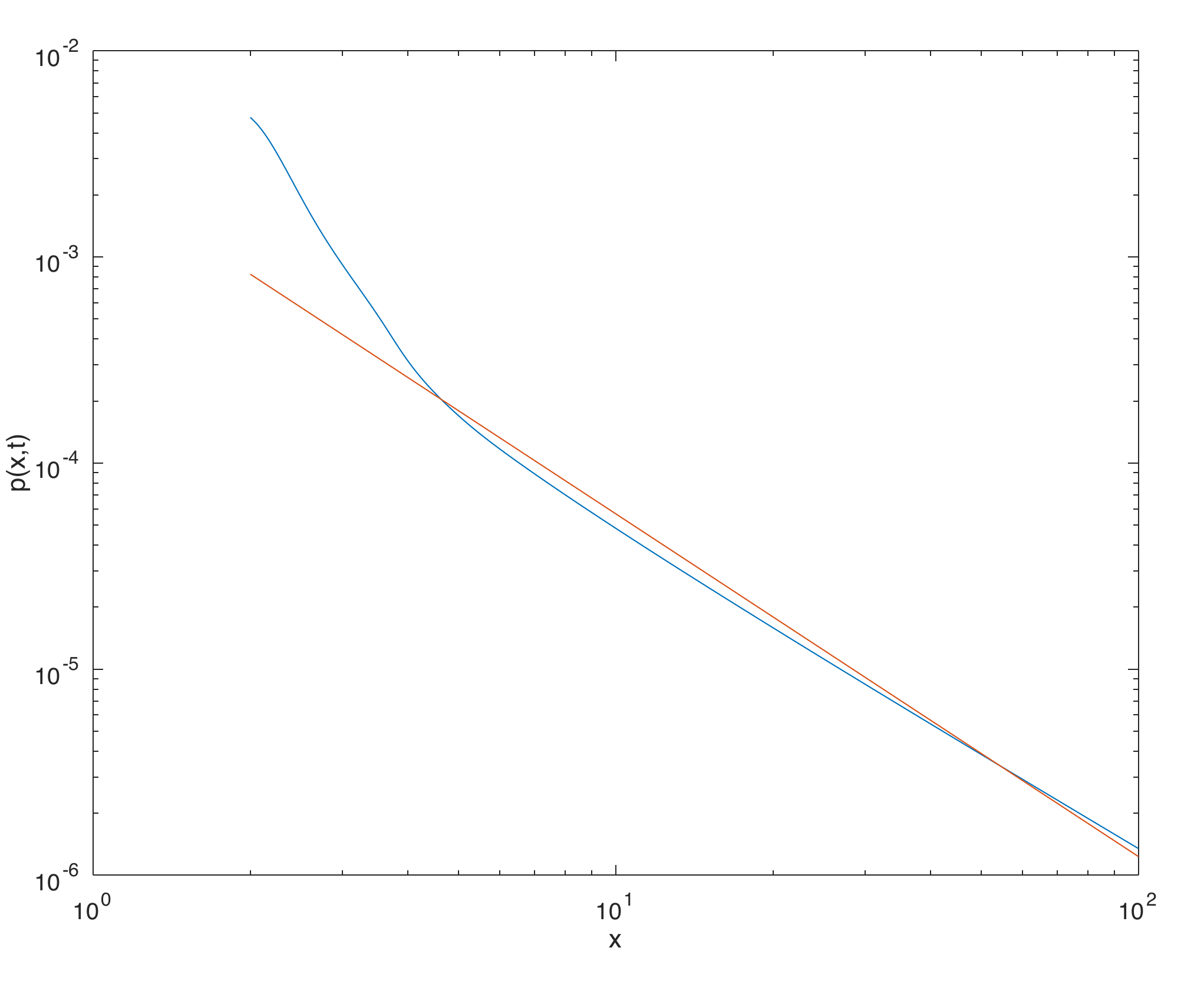}
	\caption{Left: Solution of the semi-fractional differential equation in Example \ref{51} at different times ($t=0.5$, $0.7$ and $1.0$). Right: log-log plot of the solution in Example \ref{51}.}
	\label{fig2}
\end{figure}
 \begin{bsp}\label{51}
Let $D_1=-1$, $D_2=0$, $b=5$ and $T_2=1$ such that \eqref{semifracDE} reduces to 
 \begin{align*}
 \frac{\partial}{\partial t}p(x,t)=-\frac{\partial^{\alpha}}{\partial_{c,\theta}x^{\alpha}}p(x,t) \quad\text{for all }x\in[-5,5],\,t\in[0,1].
 \end{align*}
 In addition, we choose $\alpha=0.5$, $c=e^{\pi}$ and $\theta(x)=\alpha\sin(x)+\Gamma(1-\alpha)$. Then $\theta$ is a smooth admissable function with respect to $c$ and $\alpha$. 
Following Remark 5.10 in \cite{MMMSik} the solution $p_1$ of the corresponding fractional equation at time $T_1=0.01$ (our starting point) is given by $S_{\alpha}(1,\sigma,0)$, where $\sigma=(T_1\cos(\frac{\pi\alpha}{2}))^{1/\alpha}$. Starting from $p_1$ 
We are now able to approximately calculate the solution of the semi-fractional diffusion equation. In Figure \ref{fig2} the result is given for different values of $t\in[T_1,T_2]$ and a log-log plot of the solution shows oscillations about a straight line which can also be seen in practical applications \cite{Sor}.
 \end{bsp}
\begin{figure}[H]
 \includegraphics[scale=0.35]{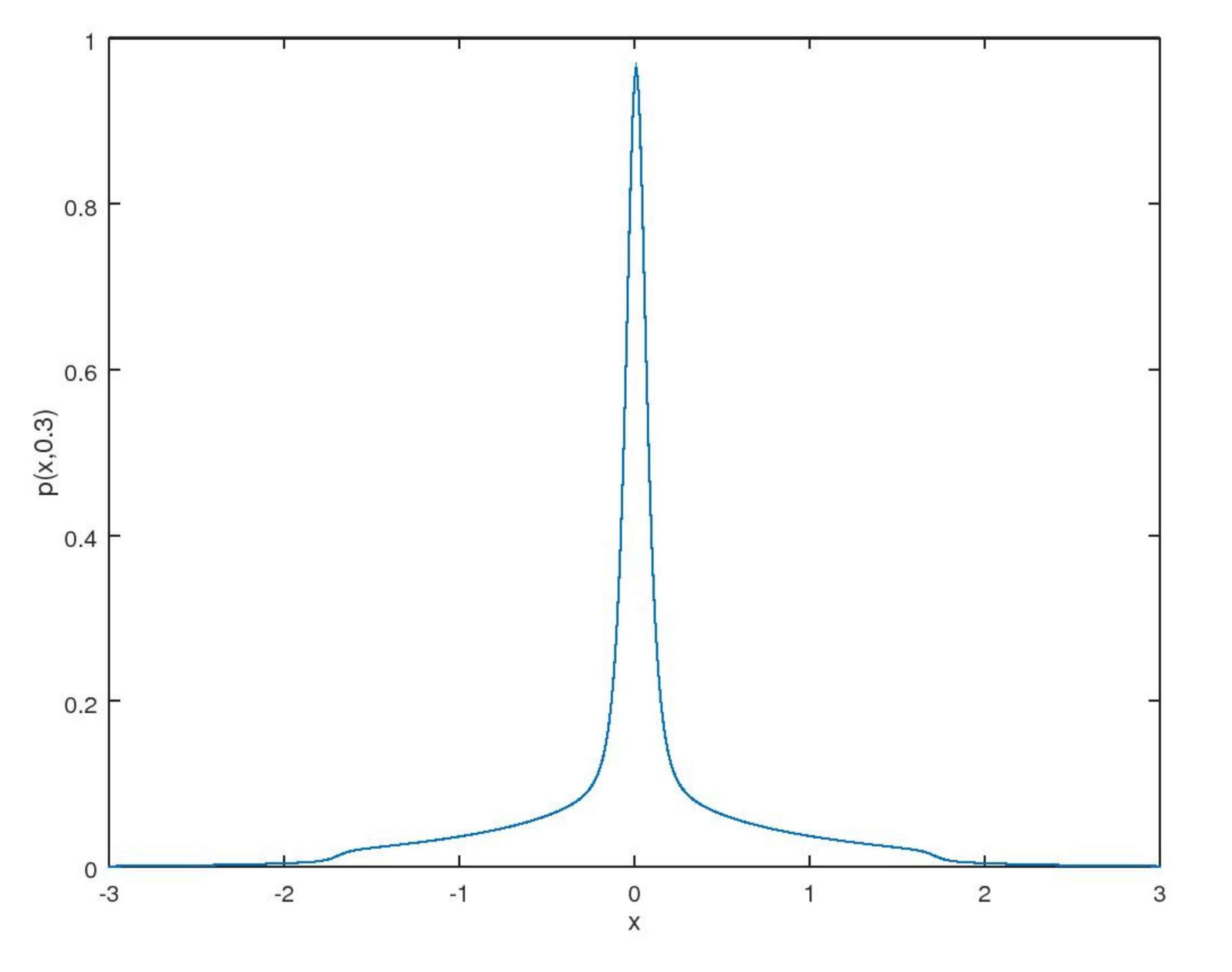}
 \includegraphics[scale=0.35]{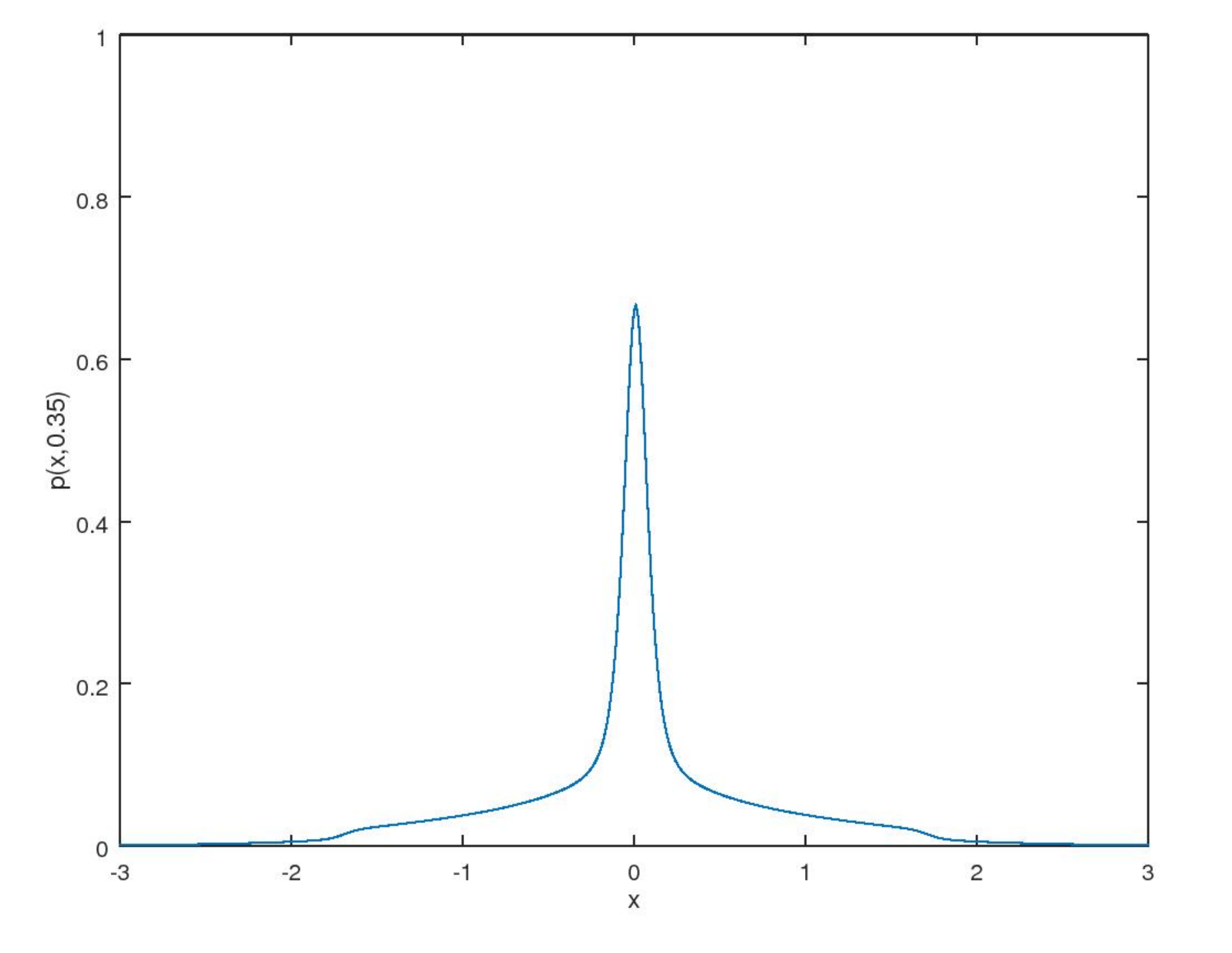}
 \includegraphics[scale=0.35]{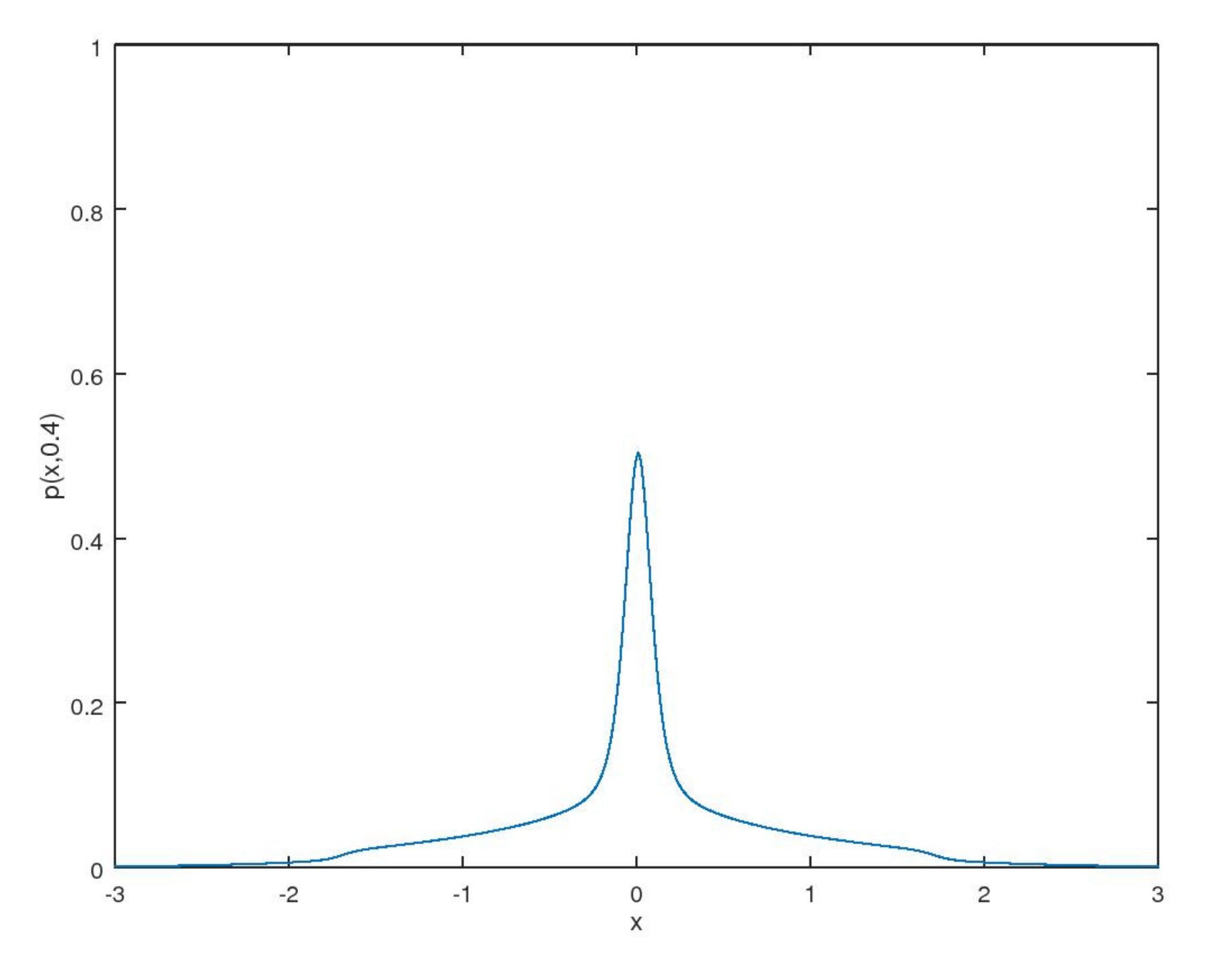}
 \includegraphics[scale=0.35]{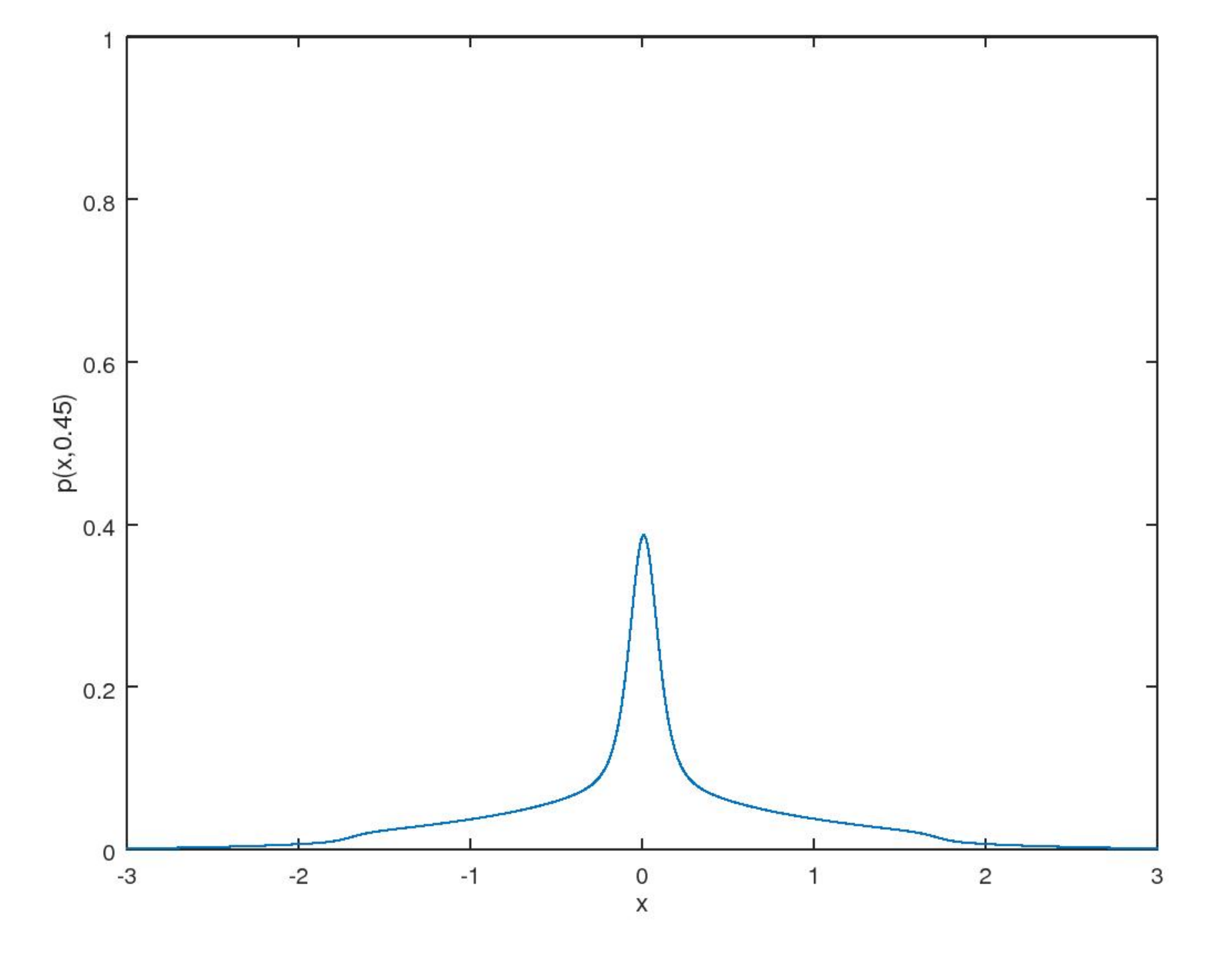}
  \caption{Solution of the semi-fractional differential equation in Example \ref{52} at different times ($t=0.3$, $0.35$, $0.4$ and $0.45$).}\label{fig4}
\end{figure}
 \begin{bsp}\label{52}
  Let $D_1=-0.5=D_2$, $b=5$ and $T_2=0.45$. Again, we choose $\alpha=0.5$ but this time we consider the function $\theta(x)=\alpha\cos(x)+\Gamma(1-\alpha)$ with $c=e^{\pi}$. Then $\theta$ is a smooth admissable function with
  respect to $\alpha$ and $c$. Our starting point is the corresponding stable density at $T_1=0.01$ with representation $S_{\alpha}(0,\sigma,0)$, where $\sigma$ is as in Example \ref{51}.
In Figure \ref{fig4}, the numerically calculated semistable density is shown for different values of $t\in[T_1,T_2]$.    
Note that the pictures indicate the appearance of more than one change between convexity and concavity in each of the tails of the semistable densities, which for stable distributions cannot happen. This is also apparent from numerical calculations of one-sided semistable densities by Laplace inversion techniques in \cite{Cha}.
\end{bsp}

\bibliographystyle{plain}

\begin{thebibliography}{10}
	
\bibitem{Stirling}
Andrews, G.E.; Askey, R.; and Roy, R. (1999)
{\it Special Functions.}
Cambridge University Press, Cambridge.

\bibitem{Octave}
  Bateman, D.; Eaton, J.W.; Hauberg, S.; and Wehbring, R. (2016)
  GNU Octave version 4.2.0 manual: a high-level interactive language for
  numerical computations.\\
 {\tt www.gnu.org/software/octave/doc/interpreter/}
 
\bibitem{Integral}
Bateman, H. (1954)
{\it Tables of Integral Transforms,} Vol. 1.
McGraw-Hill, New York.

\bibitem{Cha}
Chaudhuri, R. (2014)
{\it Non-Gaussian Semi-Stable Distributions and Their Statistical Applications.}
Ph.D. Thesis, University of North Carolina, Chapel Hill.\\
{\tt cdr.lib.unc.edu/indexablecontent/uuid:a92e2348-bb06-4e0e-9a65-ba8112406df0}

\bibitem{Chavez} 
Chavez, A. (2000) 
A fractional diffusion equation to describe L\'evy flights. 
{\it Phys. Lett. A} {\bf 239} 13--16.

\bibitem{Flojolet}
Flajolet, P.; and Sedgewick, R. (2009)
{\it Analytic Combinatorics.}
Cambridge University Press, Cambridge.

\bibitem{Fourier1}
Folland, G.B. (1992)
{\it Fourier Analysis and Its Applications.}
Wadsworth \& Brooks/Cole, London.

\bibitem{binom}
Hazewinkel, M. ed. (1988)
{\it Encyclopaedia of Mathematics,} Vol. 1.
Reidel, Kluwer, Dordrecht.

\bibitem{HPBA}
Huillet, T.; Porzio, A.; and Ben Alaya, M. (2001)
On L\'evy stable and semistable distributions.
{\it Fractals} {\bf 9} 347--364.

\bibitem{KLM}
Kelly, J.F.; Li, C.G.; and Meerschaert, M.M. (2018)
Anomalous diffusion with ballistic scaling: A new fractional derivative.
{\it J. Comp. Appl. Math.} {\bf 339} 161--178.

\bibitem{KST}
Kilbas, A.A.; Srivastava, H.M.; and Trujillo, J.J. (2006)
{\it Theory and Applications of Fractional Differential Equations.}
North-Holland Mathematical Studies {\bf 204}, Elsevier, Amsterdam.

\bibitem{ML}
Martin-L\"of, A. (1985)
A limit theorem which clarifies the ``Petersburg paradox''. 
{\it J. Appl. Probab.} {\bf 22} 634--643.

\bibitem{MMMHPS}
Meerschaert, M.M.; and Scheffler, H.P. (2001)
{\it Limit Distributions for Sums of Independent Random Vectors.}
Wiley, New York.

\bibitem{neben}
Meerschaert, M.M.; and Scheffler, H.P. (2002)
Semistable L\'evy motion.
{\it Fractional Calculus and Applied Analysis} {\bf 5} 27--54.

\bibitem{MMMSik}
Meerschaert, M.M.; and Sikorskii, A. (2012)
{\it Stochastic Models for Fractional Calculus.}
De Gruyter, Berlin.

\bibitem{MMMTad}
Meerschaert, M.M.; and Tadjeran, C. (2006) 
Finite difference approximations for two-sided space-fractional partial differential equations. 
{\it Appl. Numerical Math.} {\bf 56} 80--90.

\bibitem{MK} 
Metzler, R.; and Klafter, J. (2000) 
The random walk's guide to anomalous diffusion: A fractional dynamics approach. 
{\it Phys. Rep.} {\bf 339} 1--77.

\bibitem{SKM}
Samko, S.G.; Kilbas, A.A.; and Marichev, O.I. (1993) 
{\it Fractional Integrals and Derivatives.}
Gordon and Breach, London.

\bibitem{Sato}
Sato, K. (1999)
{\it L\'evy Processes and Infinitely Divisible Distributions.}
Cambridge University Press, Cambridge.

\bibitem{Sor} 
Sornette, D. (1998) 
Discrete-scale invariance and complex dimensions. 
{\it Physics Reports} {\bf 297} 239--270.

\bibitem{Sor2}
Sornette, D. (2017)
{\it Why Stock Markets Crash: Critical Phenomena in Complex Financial Systems.}
Princeton University Press, Princeton.

\end{thebibliography}

\end{document}